\newtheorem{theorem}{Theorem}
\theoremstyle{plain}
\newtheorem{corollary}{Corollary}
\newtheorem{example}{Example}
\newtheorem{lemma}{Lemma}
\newtheorem{proposition}{Proposition}
\newtheorem{remark}{Remark}
\numberwithin{equation}{section}
\begin{document}
\title[integro-differential equations]{On the Cauchy problem for
nondegenerate parabolic integro-differential equations in the scale of
generalized H\"{o}lder spaces}
\author{Remigijus Mikulevi\v{c}ius}
\email{mikulvcs@usc.edu}
\address{Department of Mathematics, University of Southern California, Los Angeles}

\author{Fanhui Xu}
\email{fanhuixu@usc.edu}
\address{Department of Mathematics, University of Southern California, Los Angeles}
\date{August 28, 2018}
\subjclass{35R09, 60J75, 35B65}
\keywords{non-local parabolic integro-differential equations, L\'{e}vy
processes}

\begin{abstract}
Parabolic integro-differential nondegenerate Cauchy problem is considered
in the scale of H\"{o}lder spaces of functions whose regularity is defined
by a radially O-regularly varying L\'{e}vy measure. Existence and uniqueness
and the estimates of the solution are derived.
\end{abstract}

\maketitle
\tableofcontents

\section{Introduction}

Let $\alpha \in \left( 0,2\right) $ and $\mathfrak{A}^{\alpha }$ be the
class of all nonnegative measures $\nu $\ on $\mathbf{R}_{0}^{d}=\mathbf{R}%
^{d}\backslash \left\{ 0\right\} $ such that $\int \left\vert y\right\vert
^{2}\wedge 1d\nu <\infty $ and 
\begin{equation*}
\alpha =\inf \left\{ \sigma <2:\int_{\left\vert y\right\vert \leq
1}\left\vert y\right\vert ^{\sigma }d\mathfrak{\nu }<\infty \right\} .
\end{equation*}%
In addition, we assume that for $\nu \in \mathfrak{A}^{\alpha },$ 
\begin{eqnarray*}
\int_{\left\vert y\right\vert >1}\left\vert y\right\vert d\nu  &<&\infty 
\text{ if }\alpha \in \left( 1,2\right) , \\
\int_{R<\left\vert y\right\vert \leq R^{\prime }}yd\nu  &=&0\text{ if }%
\alpha =1\text{ for all }0<R<R^{\prime }<\infty .\text{ }
\end{eqnarray*}

In this paper we consider the parabolic Cauchy problem with $\lambda \geq 0$%
\begin{eqnarray}
\partial _{t}u(t,x) &=&Lu(t,x)-\lambda u\left( t,x\right) +f(t,x)\text{ in }%
H_{T}=[0,T]\times \mathbf{R}^{d},  \label{1'} \\
u(0,x) &=&0,~x\in \mathbf{R}^{d},  \notag
\end{eqnarray}%
and integro-differential operator 
\begin{equation*}
L\varphi \left( x\right) =L^{\nu }\varphi \left( x\right) =\int \left[
\varphi (x+y)-\varphi \left( x\right) -\chi _{\alpha }\left( y\right) y\cdot
\nabla \varphi \left( x\right) \right] \nu \left( dy\right) ,\varphi \in
C_{0}^{\infty }\left( \mathbf{R}^{d}\right) ,
\end{equation*}%
where $\nu \in \mathfrak{A}^{\alpha },$ $\chi _{\alpha }\left( y\right) =0$
if $\alpha \in (0,1),\chi _{\alpha }\left( y\right) =1_{\left\{ \left\vert
y\right\vert \leq 1\right\} }\left( y\right) $ if $\alpha =1,$ and $\chi
_{\alpha }\left( y\right) =1$ if $\alpha \in (1,2).$ Given a L\'{e}vy
measure $\nu \in \mathfrak{A}^{\alpha }$ on $\mathbf{R}_{0}^{d}=\mathbf{R}%
^{d}\backslash \{0\}$, there exist a Poisson random measure $J\left(
ds,dy\right) $ on $[0,\infty )\times \mathbf{R}_{0}^{d}$ such that 
\begin{equation*}
\qquad \mathbf{E}\left[ J\left( ds,dy\right) \right] =\nu \left( dy\right)
ds,
\end{equation*}%
and a L\'{e}vy process $Z_{t}^{\nu }$ so that 
\begin{equation}
Z_{t}^{\nu }=\int_{0}^{t}\int_{\mathbf{R}_{0}^{d}}\chi _{\alpha }\left(
y\right) y\tilde{J}\left( ds,dy\right) +\int_{0}^{t}\int_{\mathbf{R}%
_{0}^{d}}\left( 1-\chi _{\alpha }\left( y\right) \right) yJ\left(
ds,dy\right) ,~t\geq 0,  \label{2}
\end{equation}%
with $\tilde{J}\left( ds,dy\right) =J\left( ds,dy\right) -\nu \left(
dy\right) ds.$ For $\nu \in \mathfrak{A}^{\alpha }$, set 
\begin{eqnarray*}
\delta \left( r\right)  &=&\delta _{\nu }\left( r\right) =\nu \left( \left\{
\left\vert y\right\vert >r\right\} \right) ,r>0, \\
w\left( r\right)  &=&w_{\nu }\left( r\right) =\delta _{\nu }\left( r\right)
^{-1},r>0.
\end{eqnarray*}%
Our main assumption is that $w\left( r\right) =w_{\nu }\left( r\right)
=\delta _{\nu }\left( r\right) ^{-1},r>0,$ is an O-RV function (O-regular
variation function) at zero (see \cite{aa} and \cite{bgt}), that is%
\begin{equation*}
r_{1}\left( \varepsilon \right) =\overline{\lim_{x\rightarrow 0}}\frac{%
\delta \left( \varepsilon x\right) ^{-1}}{\delta \left( x\right) ^{-1}}%
<\infty ,\varepsilon >0.
\end{equation*}%
By Theorem 2 in \cite{aa}, the following limits exist:%
\begin{equation}
p_{1}=p_{1}^{\nu }=\lim_{\varepsilon \rightarrow 0}\frac{\log r_{1}\left(
\varepsilon \right) }{\log \varepsilon },q_{1}=q_{1}^{\nu
}=\lim_{\varepsilon \rightarrow \infty }\frac{\log r_{1}\left( \varepsilon
\right) }{\log \varepsilon },  \label{if1}
\end{equation}%
and $\,p_{1}\leq q_{1}.$ It can be shown (see Remark \ref{ar1}) that $%
p_{1}\leq \alpha \leq q_{1}$. In this paper, we study the Cauchy problem (%
\ref{1'}) in the scale of spaces of generalized H\"{o}lder functions whose
regularity is determined by the L\'{e}vy measure $\nu$. We use $w$ to define
generalized Besov norms $\left\vert \cdot \right\vert _{\beta ,\infty }$ and
generalized spaces $\tilde{C}_{\infty ,\infty }^{\beta }\left( H_{T}\right)
,\beta >0$ (See Section 2.2.). They are Besov spaces of generalized
smoothness (see \cite{kg}, \cite{kg2}, \cite{fw2}) with admissible sequence $%
w\left( N^{-j}\right) ^{-\beta },j\geq 0,$ and covering sequence $%
N^{j},j\geq 0,$ with $N>1$. In particular (see Section 2), for $\beta \in
\left( 0,q_{1}^{-1}\right) $, the norm $\left\vert \cdot \right\vert _{\beta
,\infty }$ for the functions on $\mathbf{R}^{d}$ is equivalent to%
\begin{equation*}
\left\vert \left\vert u\right\vert \right\vert _{\beta }=\sup_{x}\left\vert
u\left( x\right) \right\vert +\sup_{x\neq y}\frac{\left\vert u\left(
x\right) -u\left( y\right) \right\vert }{w\left( \left\vert x-y\right\vert
\right) ^{\beta }}.
\end{equation*}%
When $\nu $ is \textquotedblleft close\textquotedblright to an $\alpha $-stable measure, they reduce to the
classical Besov (or equiv. H\"{o}lder-Zygmund) spaces.

Let

\begin{equation*}
\tilde{\nu}_{R}\left( dy\right) =w\left( R\right) \nu \left( Rdy\right)
,R\in \left[ 0,1\right] .
\end{equation*}
The main results of this paper is

\begin{theorem}
\label{thm2}Let $\beta \in \left( 0,\infty \right) ,\lambda \geq 0$. Let $%
\nu \in \mathfrak{A}^{\alpha },$ and \thinspace $w=w_{\nu }$ be an O-RV
function at zero with $p_{1},q_{1}$ defined in (\ref{if1}). Assume

\textbf{A.}%
\begin{eqnarray*}
0 &<&p_{1}\leq q_{1}<1\text{ if }\alpha \in \left( 0,1\right) ,~1\leq
p_{1}\leq q_{1}<2\text{ if }\alpha =1, \\
1 &<&p_{1}\leq q_{1}<2\text{ if }\alpha \in \left( 1,2\right) ;
\end{eqnarray*}

\textbf{B.} 
\begin{equation*}
\inf_{R\in (0,1],\left\vert \hat{\xi}\right\vert =1}\int_{\left\vert
y\right\vert \leq 1}\left\vert \hat{\xi}\cdot y\right\vert ^{2}\tilde{\nu}%
_{R}\left( dy\right) >0;
\end{equation*}

\textbf{C. }There is $N_{0}>2$ so that%
\begin{equation*}
\int_{1}^{\infty }w\left( t\right) ^{\frac{1}{q_{1}}}\frac{dt}{t^{N_{0}}}%
<\infty .
\end{equation*}

Then for each $f\in \tilde{C}_{\infty ,\infty }^{\beta }\left( H_{T}\right) $
there is a unique solution $u\in \tilde{C}_{\infty ,\infty }^{1+\beta
}\left( H_{T}\right) $ solving (\ref{1'}). Moreover, 
\begin{eqnarray}
\left\vert u\right\vert _{\beta ,\infty } &\leq &C\rho _{\lambda }\left(
T\right) \left\vert f\right\vert _{\beta ,\infty },  \label{est6} \\
\left\vert u\right\vert _{1+\beta ,\infty } &\leq &C\left[ 1+\rho _{\lambda
}\left( T\right) \right] ~\left\vert f\right\vert _{\beta ,\infty }
\label{est3}
\end{eqnarray}%
and 
\begin{eqnarray*}
&&\left\vert u\left( t,\cdot \right) -u\left( t^{\prime },\cdot \right)
\right\vert _{\mu +\beta ,\infty } \\
&\leq &C\left\{ \left( t-t^{\prime }\right) ^{1-\mu }+\left[ 1+\rho
_{\lambda }\left( T\right) \right] \left\vert t-t^{\prime }\right\vert
\right\} \left\vert f\right\vert _{\beta ,\infty }
\end{eqnarray*}%
for any $\mu \in \lbrack 0,1]$ and $t^{\prime }<t\leq T,$ where $\rho _{\lambda
}\left( T\right) =\left( \frac{1}{\lambda }\wedge T\right) $. The constant $C$
does not depend on $f,\lambda ,T,\mu $.
\end{theorem}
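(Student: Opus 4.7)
The plan is to build the unique solution via the probabilistic Duhamel representation
\begin{equation*}
u(t,x)=\int_0^t e^{-\lambda(t-s)}P_{t-s}f(s,\cdot)(x)\,ds,\qquad P_t\varphi(x)=\mathbf{E}\bigl[\varphi(x+Z_t^{\nu})\bigr],
\end{equation*}
and to extract the three estimates from frequency-localized bounds on the semigroup $P_t$ in the Littlewood--Paley decomposition $\{\Delta_j\}_{j\geq 0}$ adapted to the admissible sequence $w(N^{-j})^{-\beta}$ underlying $|\cdot|_{\beta,\infty}$. Uniqueness will be handled separately by a positive maximum principle for $L-\lambda-\partial_t$ on bounded solutions, using the fact that $L$ is the generator of a Feller semigroup.

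The crucial analytic step is to establish, for each block $\Delta_j$ localized at frequencies of order $N^j$, the Bernstein-type bound
\begin{equation*}
\bigl|P_t\Delta_j\varphi\bigr|_{L^\infty}\leq C\,e^{-c\,t\,\delta(N^{-j})}\bigl|\Delta_j\varphi\bigr|_{L^\infty},\qquad t\geq0,\ j\geq0.
\end{equation*}
This is carried out on the Fourier side: the characteristic exponent $\psi(\xi)=\int(e^{iy\cdot\xi}-1-i\chi_\alpha(y)y\cdot\xi)\,\nu(dy)$ satisfies $\widehat{P_t\varphi}=e^{t\psi}\widehat{\varphi}$, and nondegeneracy \textbf{B}, after the rescaling $\tilde\nu_R$ with $R\sim|\xi|^{-1}$, yields $-\operatorname{Re}\psi(\xi)\geq c\,\delta(|\xi|^{-1})$. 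Combined with moment bounds on the derivatives $\partial^\gamma\psi$ obtained through the rescaled measures $\tilde\nu_R$ (\textbf{A} matching $\chi_\alpha$ to the correct integrability regime, and \textbf{C} taming the large-$|y|$ tail), this produces uniform $L^1$ bounds on the convolution kernel of $P_t\Delta_j$ and hence the claim.

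Inserting the block estimate into the Duhamel formula and using $|\Delta_j f(s,\cdot)|_{L^\infty}\leq w(N^{-j})^{\beta}|f|_{\beta,\infty}$ gives
\begin{equation*}
\bigl|\Delta_j u(t,\cdot)\bigr|_{L^\infty}\leq\Bigl(\bigl[\lambda+c\delta(N^{-j})\bigr]^{-1}\wedge T\Bigr)\,w(N^{-j})^{\beta}|f|_{\beta,\infty}.
\end{equation*}
Multiplying by $w(N^{-j})^{-\beta}$ and taking $\sup_j$ yields (\ref{est6}) with prefactor $\rho_\lambda(T)$; multiplying instead by $w(N^{-j})^{-(1+\beta)}=\delta(N^{-j})w(N^{-j})^{-\beta}$ and splitting into low frequencies (whose contribution is $\leq\rho_\lambda(T)$) and high frequencies (whose contribution is bounded uniformly in $j$) yields (\ref{est3}). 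For the time regularity, the identity
\begin{equation*}
u(t,\cdot)-u(t',\cdot)=(P_{t-t'}-I)u(t',\cdot)+\int_{t'}^t e^{-\lambda(t-s)}P_{t-s}f(s,\cdot)\,ds,
\end{equation*}
combined with the block inequality $|(P_{t-t'}-I)\Delta_j\varphi|_{L^\infty}\leq C\min(1,(t-t')\delta(N^{-j}))|\Delta_j\varphi|_{L^\infty}$, gives the $\mu=0$ bound; the equation $\partial_t u=Lu-\lambda u+f$ together with (\ref{est3}) gives the $\mu=1$ bound; dyadic interpolation handles intermediate $\mu$.

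The main obstacle is the block estimate itself. Because the gap $p_1<q_1$ is allowed, no scale-invariant reduction to a stable case is possible, and the decay rate $\delta(N^{-j})$ must be tracked quantitatively through the full O-regularly varying framework. The precise interplay between \textbf{A} (fixing the correct cutoff $\chi_\alpha$ and placing $p_1,q_1$ in the range where the drift from $\chi_\alpha$ is controlled by the diffusive part), \textbf{B} (supplying the uniform lower bound $-\operatorname{Re}\psi(\xi)\gtrsim\delta(|\xi|^{-1})$), and \textbf{C} (giving integrable tail behaviour so that the inverse Fourier transform of $e^{t\psi}$ times a smooth cutoff is an $L^1$ function with bounds uniform in $j$) is what makes the required sharp smoothing available uniformly across frequencies, and verifying it is the technical heart of the argument.
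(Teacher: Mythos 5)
Your proposal follows essentially the same route as the paper: the Duhamel representation $u(t,x)=\int_0^te^{-\lambda(t-s)}\mathbf{E}f(s,x+Z_{t-s}^{\nu})\,ds$, a Littlewood--Paley decomposition adapted to $w(N^{-j})^{-\beta}$, and the key block estimate with decay rate $\delta(N^{-j})$ obtained from uniform $L^{1}$ bounds on the rescaled kernels $\mathbf{E}\tilde{\varphi}(\cdot+Z_{t}^{\tilde{\nu}_{R}})$ via the nondegeneracy condition \textbf{B} on the Fourier side, which is precisely the paper's Lemma \ref{le66} and its use in Proposition \ref{thm11}. The only cosmetic deviations (a maximum-principle uniqueness argument in place of the paper's It\^{o}-formula-plus-mollification one, and a slightly different bookkeeping of where hypothesis \textbf{C} enters --- in the paper it is needed for the norm equivalence of Proposition \ref{pr2} used in the final limiting step rather than for the kernel bounds) do not change the substance.
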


More specific examples could be the following.

\begin{example}
\label{e1}According to \cite{dm}, Chapter 3, 70-74, any L\'{e}vy measure $%
\nu \in \mathfrak{A}^{\alpha }$ can be disintegrated as%
\begin{equation*}
\nu \left( \Gamma \right) =-\int_{0}^{\infty }\int_{S_{d-1}}\chi _{\Gamma
}\left( rw\right) \Pi \left( r,dw\right) d\delta \left( r\right) ,\Gamma \in 
\mathcal{B}\left( \mathbf{R}_{0}^{d}\right) ,
\end{equation*}%
where $\delta =\delta _{\nu }$, and $\Pi \left( r,dw\right) ,r>0,$ is a
measurable family of measures on the unit sphere $S_{d-1}$ with $\Pi \left(
r,S_{d-1}\right) =1,r>0.$ If $\delta $ is an O-RV function, $\left\vert
\left\{ s\in \left[ 0,1\right] :r_{1}\left( s\right) <1\right\} \right\vert
>0,$ \textbf{A}, \textbf{C} and%
\begin{equation*}
\inf_{\left\vert \hat{\xi}\right\vert =1}\int_{S_{d-1}}\left\vert \hat{\xi}%
\cdot w\right\vert ^{2}\Pi \left( r,dw\right) \geq c_{0}>0,~r>0,
\end{equation*}%
hold, then all assumptions of Theorem \ref{thm2} are satisfied (see
Corollary \ref{ac2}).
\end{example}

\begin{example}
\label{e2}Consider L\'{e}vy measures in radial and angular coordinate system (%
$r=\left\vert y\right\vert ,w=\frac{y}{\left\vert y\right\vert }$) in the
form 
\begin{equation*}
\nu \left( B\right) =\int_{0}^{\infty }\int_{\left\vert w\right\vert
=1}1_{B}\left( rw\right) a\left( r,w\right) j\left( r\right) r^{d-1}S\left(
dw\right) dr,B\in \mathcal{B}\left( \mathbf{R}_{0}^{d}\right) ,
\end{equation*}%
where $S\left( dw\right) $ is a finite measure on the unit sphere.

Assume

(i) There is $C>1,c>0,0<\delta _{1}\leq \delta _{2}<1,$ such that 
\begin{equation*}
C^{-1}\phi \left( r^{-2}\right) \leq j\left( r\right) r^{d}\leq C\phi \left(
r^{-2}\right)
\end{equation*}%
and for all $1<r\leq R,$ 
\begin{equation*}
c^{-1}\left( \frac{R}{r}\right) ^{\delta _{1}}\leq \frac{\phi \left(
R\right) }{\phi \left( r\right) }\leq c\left( \frac{R}{r}\right) ^{\delta
_{2}}.
\end{equation*}

\textbf{(}ii\textbf{) }There is a function $\rho _{0}\left( w\right) $
defined on the unit sphere such that $\rho _{0}\left( w\right) \leq a\left(
r,w\right) \leq 1,\forall r>0$, and for all $\left\vert \hat{\xi}\right\vert
=1$, 
\begin{equation*}
\int_{S^{d-1}}\left\vert \hat{\xi}\cdot w\right\vert ^{2}\rho _{0}\left(
w\right) S\left( dw\right) \geq c>0.
\end{equation*}%
Under these assumptions, it can be shown that \textbf{B }and\textbf{\ C}
hold, and $\delta _{v}$ is an O-RV function with $2\delta _{1}\leq p_{1}\leq
q_{1}\leq 2\delta _{2}.$ Among the options for $\phi $ could be (see \cite%
{ksv})\newline
(1) $\phi \left( r\right) =\Sigma _{i=1}^{n}r^{\alpha _{i}},\alpha _{i}\in
\left( 0,1\right) ,i=1,\ldots ,n$;\newline
(2) $\phi \left( r\right) =\left( r+r^{\alpha }\right) ^{\beta },\alpha
,\beta \in \left( 0,1\right) $;\newline
(3) $\phi \left( r\right) =r^{\alpha }\left( \ln \left( 1+r\right) \right)
^{\beta },\alpha \in \left( 0,1\right) ,\beta \in \left( 0,1-\alpha \right) $%
;\newline
(4)$\ \phi \left( r\right) =\left( r+m^{1/\alpha }\right) ^{\alpha
}-m,\alpha \in (0,1),m>0;$\newline
(5) $\phi \left( r\right) =\left[ \ln \left( \cosh \sqrt{r}\right) \right]
^{\alpha },\alpha \in \left( 0,1\right) $.
\end{example}

Equations in classical H\"{o}lder spaces with non-local nondegenerate
operators of the form%
\begin{eqnarray*}
\mathcal{L}u\left( x\right)  &=&1_{\alpha \in \left( 0,2\right) }\int \left[
u\left( x+y\right) -u\left( x\right) -1_{\alpha \geq 1}1_{\left\vert
y\right\vert \leq 1}y\cdot \nabla u\left( x\right) \right] m\left(
x,y\right) \nu \left( dy\right)  \\
&+&1_{\alpha =2}a^{ij}\left( x\right) \partial _{ij}^{2}u\left( x\right)
+1_{\alpha \geq 1}\tilde{b}^{i}\left( x\right) \partial _{i}u\left( x\right)
+l\left( x\right) u\left( x\right) ,x\in \mathbf{R}^{d},
\end{eqnarray*}%
were considered in many papers. In \cite{ak}, the existence and uniqueness
of a solution to a parabolic equation with $\mathcal{L}$ in H\"{o}lder
spaces was proved analytically for $m$ H\"{o}lder continuous in $x$ and
smooth in $y$, $\nu \left( dy\right) =dy/\left\vert y\right\vert ^{d+\alpha
}.$ The elliptic problem $\mathcal{L}u=f$ in $\mathbf{R}^{d}$ with $\nu
\left( dy\right) =dy/\left\vert y\right\vert ^{d+\alpha }$ was considered in 
\cite{ba}, \cite{cas} and \cite{dk}. In \cite{cas}, the interior H\"{o}lder
estimates (in a non-linear case as well) were studied assuming that $m$ is
symmetric in $y$. In \cite{ba}, with $\nu \left( dy\right) =dy/\left\vert
y\right\vert ^{d+\alpha },$ the a priori estimates were derived in H\"{o}%
lder classes assuming H\"{o}lder continuity of $m$ in $x$, except the case $%
\alpha =1$. Similar results, including the case $\alpha =1$ were proved in 
\cite{dk}. In \cite{zh} (see references therein), in the classical H\"{o}%
lder spaces the case of a nondegenerate 
\begin{equation*}
\nu \left( \Gamma \right) =\int_{0}^{\infty }\int_{S_{d-1}}\chi _{\Gamma
}\left( rw\right) a\left( r,w\right) S\left( dw\right) \frac{dr}{r^{1+\alpha
}},\Gamma \in \mathcal{B}\left( \mathbf{R}_{0}^{d}\right) ,
\end{equation*}%
with a finite measure $S\left( dw\right) $ on the unit sphere was
considered. Finally, in \cite{rf}, for (\ref{1'}) with $x$-dependent density 
$m\left( x,y\right) $ at $\nu ,$ under different assumptions than \textbf{A}-%
\textbf{C}, existence and uniqueness in generalized smoothness classes is
derived.

Our paper is organized as follows. In section 2, notation is introduced, the
scale of generalized H\"{o}lder function spaces is defined, and various
equivalent norms are introduced. In particular, using some probabilistic
considerations, we prove the equivalence of $\left\vert u\right\vert _{\beta
,\infty }$ to the norms involving fractional powers of nondegenerate $L^{\nu
}$. The continuity of the operator is proved as well. Study of function
spaces of generalized smoothness dates back to the seventies-eighties, (see 
\cite{kg}, \cite{kg2} and references therein). Later, this interest
continued in connection with the construction problems of Markov processes
with jumps (see \cite{fw2}, \cite{fw} and references therein). In section 3,
we prove the main theorem by starting with smooth input functions. Then we
derive the key uniform estimates for the corresponding smooth solutions to (%
\ref{1'}). We handle generalized H\"{o}lder inputs by passing to the limit.
Finally, Appendix contains all needed results about O-RV functions. The
regular variation functions were introduced in \cite{k} and used in
tauberian theorems which were extended to O-RV functions as well (see \cite%
{aa}, \cite{bgt}, and references therein). They are very convenient for the
derivation of our main estimates.

\section{Notation, function spaces and norm equivalence}

\subsection{Basic notation}

We denote $\mathbf{N}=\{0,1,2,3,\ldots ,\}$, $\mathbf{N}_{+}=\mathbf{N}%
\backslash \{0\}$; $H_{T}=\left[ 0,T\right] \times \mathbf{R}^{d}$; $S^{d-1}$
is the unit sphere in $\mathbf{R}^{d}$. For $B\in \mathcal{B}\left( \mathbf{R%
}^{d}\right) $, we denote by $\left\vert B\right\vert $ the Lebesgue measure
of $B.$

For a function $u$ on $H_{T}$, we denote its partial derivatives by $%
\partial _{t}u=\partial u/\partial t$, $\partial _{i}u=\partial u/\partial
x_{i}$, $\partial _{ij}^{2}u=\partial ^{2}u/\partial x_{i}x_{j}$, and denote
its gradient with respect to $x$ by $\nabla u=\left( \partial _{1}u,\ldots
,\partial _{d}u\right) $ and $D^{|\gamma |}u=\partial ^{|\gamma |}u/\partial
x_{1}^{\gamma _{1}}\ldots \partial x_{d}^{\gamma _{d}}$, where $\gamma
=\left( \gamma _{1},\ldots ,\gamma _{d}\right) \in \mathbf{N}^{d}$ is a
multi-index.

We use $C_{b}^{\infty }\left( \mathbf{R}^{d}\right) $ to denote the set of
bounded infinitely differentiable functions on $\mathbf{R}^{d}$ whose
derivative of arbitrary order is bounded, and $C^{k}\left( \mathbf{R}%
^{d}\right) ,k\in \mathbf{N}$ the class of $k$-times continuously
differentiable functions.

We denote $\mathcal{S}\left( \mathbf{R}^{d}\right) $ the Schwartz space of
rapidly decreasing functions on $\mathbf{R}^{d},$ and $\mathcal{S}^{\prime
}\left( \mathbf{R}^{d}\right) $ denotes the space of continuous functionals
on $\mathcal{S}\left( \mathbf{R}^{d}\right) $, i.e. the space of tempered
distributions.

We adopt the normalized definition for Fourier and its inverse transforms
for functions in $\mathcal{S}\left( \mathbf{R}^d\right)$, i.e., 
\begin{eqnarray*}
\mathcal{F}\varphi\left(\xi\right) &=& \hat{\varphi}\left(\xi\right) :=\int
e^{-i2\pi x\cdot \xi}\varphi\left(x\right)dx, \\
\mathcal{F}^{-1}\varphi\left(x\right)&=&\check{\varphi}\left(x\right) :=
\int e^{i2\pi x\cdot \xi}\varphi\left(\xi\right)d\xi, \enskip \varphi\in%
\mathcal{S}\left( \mathbf{R}^d\right).
\end{eqnarray*}
Recall that Fourier transform can be extended to a bijection on $\mathcal{S}%
^{\prime }\left( \mathbf{R}^d\right)$.

Throughout the sequel, $Z_{t}^{\nu }$ represents the L\'{e}vy process
associated to the L\'{e}vy measure $\nu \in \mathfrak{A}^{\alpha },$ see (%
\ref{2}).

For any L\'{e}vy measure $\nu \in \mathfrak{A}^{\alpha }$ and $R>0$, 
\begin{equation}
  \nu _{R}\left( B\right) :=\int 1_{B}\left( y/R\right) \nu \left( dy\right)
  ,B\in \mathcal{B}\left( \mathbf{R}_{0}^{d}\right) ,~\tilde{\nu}_{R}\left(
  dy\right):=w\left( R\right) \nu _{R}\left( dy\right) .\label{meas}
\end{equation}

For any L\'{e}vy measure $\nu \in \mathfrak{A}^{\alpha }$, we denote its symmetrization 
\begin{equation*}
\nu _{sym}\left( dy\right) =\frac{1}{2}\left[ \nu \left( dy\right) +\nu
\left( -dy\right) \right].
\end{equation*}%
And $\mathfrak{A}_{sym}^{\alpha }=\left\{ \mu \in \mathfrak{A}^{\alpha }:\mu
=\mu _{sym}\right\} .$

If $A\left( \mathbf{R}^{d}\right) $ is a space of functions $v$ on $\mathbf{R%
}^{d}$ with norm $\left\vert v\right\vert _{A}=\left\vert v\right\vert
_{A\left( \mathbf{R}^{d}\right) },$ then $A\left( H_{T}\right) $ denotes the
spaces of functions $u$ on $H_{T}=\left[ 0,T\right] \times \mathbf{R}^{d}$
with finite norm%
\begin{equation*}
\left\vert u\right\vert _{A}=\left\vert u\right\vert _{A\left( H_{T}\right)
}=\sup_{t\in \left[ 0,T\right] }\left\vert u\left( t,\cdot \right)
\right\vert _{A\left( \mathbf{R}^{d}\right) }.
\end{equation*}

We have specific values assigned for $c_{0},c_{1},c_{2},N_{0},N_{1}$, but we
allow $C$ to vary from line to line. In particular, $C\left( \cdots \right) $
represents a constant depending only on quantities in the parentheses.

\subsection{Function spaces and norm equivalence}

We fix a constant $N>1$. For such an $N$, by Lemma 6.1.7 in \cite{bl} and
appropriate scaling, there exists $\phi \in C_{0}^{\infty }\left( \mathbf{R}%
^{d}\right) $ such that supp$\left( \phi \right) =\{\xi :\frac{1}{N}\leq
\left\vert \xi \right\vert \leq N\}$, $\phi \left( \xi \right) >0$ in the
interior of its support, and 
\begin{equation*}
\sum_{j=-\infty }^{\infty }\phi \left( N^{-j}\xi \right) =1\mbox{ if }\xi
\neq 0.
\end{equation*}%
We denote throughout this paper 
\begin{eqnarray}
&&\varphi _{j}=\mathcal{F}^{-1}\left[ \phi \left( N^{-j}\xi \right) \right]
,\quad j=1,2,\ldots ,\xi \in \mathbf{R}^{d},  \label{j1} \\
&&\varphi _{0}=\mathcal{F}^{-1}\left[ 1-\sum_{j=1}^{\infty }\phi \left(
N^{-j}\xi \right) \right] .  \label{j0}
\end{eqnarray}%
Apparently, $\varphi _{j}\in \mathcal{S}\left( \mathbf{R}^{d}\right) ,j\in 
\mathbf{N}$. They are convolution functions we use to define \emph{%
generalized Besov spaces}. Namely, for $\beta >0$ we write $\tilde{C}%
_{\infty ,\infty }^{\beta }\left( \mathbf{R}^{d}\right) $ as the set of
functions in $\mathcal{S}^{\prime }\left( \mathbf{R}^{d}\right) $ for which 
\begin{equation}
\left\vert u\right\vert _{\beta ,\infty }:=\sup_{j}w\left( N^{-j}\right)
^{-\beta }\left\vert u\ast \varphi _{j}\right\vert _{0}<\infty ,  \label{j2}
\end{equation}%
where $w=w_{\nu }$ with $\nu \in \mathfrak{A}^{\alpha }.$

\begin{lemma}
\label{equiv}Let $\nu \in \mathfrak{A}^{\alpha }$, $w=w_{\nu }$ be an O-RV
function at zero and \textbf{A} holds for it. Let $\beta \in \left( 0,\infty
\right) $. If $u\in \tilde{C}_{\infty ,\infty }^{\beta }\left( \mathbf{R}%
^{d}\right) $, then $u$ is bounded and continuous, 
\begin{equation*}
u\left( x\right) =\sum_{j=0}^{\infty }\left( u\ast \varphi _{j}\right)
\left( x\right) ,x\in \mathbf{R}^{d},
\end{equation*}%
where the series converges uniformly. Moreover,%
\begin{equation*}
\left\vert u\right\vert _{0}\leq C\left\vert u\right\vert _{\beta ,\infty
},u\in \tilde{C}_{\infty ,\infty }^{\beta }\left( \mathbf{R}^{d}\right) .
\end{equation*}
\end{lemma}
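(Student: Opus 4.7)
The plan is to combine the trivial block estimate $|u \ast \varphi_j|_0 \le w(N^{-j})^\beta |u|_{\beta,\infty}$, immediate from the definition \eqref{j2}, with summability of $\sum_j w(N^{-j})^\beta$ to produce uniform convergence, boundedness and continuity of $u$, and the $L^\infty$ bound; the identification of the resulting sum with $u$ then comes from the Littlewood--Paley partition of unity. Note that since each $\varphi_j \in \mathcal{S}(\mathbf{R}^d)$, every convolution $u \ast \varphi_j$ is smooth, and by the block estimate also uniformly bounded, hence bounded and continuous on $\mathbf{R}^d$.

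The crux is to show $\sum_{j=0}^\infty w(N^{-j})^\beta < \infty$. This is where Assumption \textbf{A} enters: in each of the three $\alpha$-regimes it forces the lower Matuszewska index $p_1 > 0$. By the O-RV theory collected in the Appendix (following \cite{aa}, \cite{bgt}), $p_1 > 0$ furnishes a $p \in (0, p_1)$ and constants $C, j_0$ such that $w(N^{-j}) \le C N^{-jp}$ for $j \ge j_0$. Thus $\sum_j w(N^{-j})^\beta$ converges, so the partial sums $S_N := \sum_{j=0}^N u \ast \varphi_j$ form a Cauchy sequence in $L^\infty$ and converge uniformly to a bounded continuous function $v$ satisfying $|v|_0 \le C |u|_{\beta,\infty}$.

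It remains to identify $v$ with $u$. Setting $\Phi_N := \sum_{j=0}^N \varphi_j$, we have $\widehat{\Phi}_N = 1 - \sum_{j>N} \phi(N^{-j}\cdot)$: a uniformly bounded family of Schwartz functions converging pointwise to $1$, so $\Phi_N \to \delta_0$ in $\mathcal{S}'(\mathbf{R}^d)$. Continuity of convolution with the tempered distribution $u$ then gives $S_N = u \ast \Phi_N \to u \ast \delta_0 = u$ in $\mathcal{S}'$. Combined with the uniform convergence $S_N \to v$, this forces $u = v$, proving all three conclusions of the lemma simultaneously.

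The only real obstacle is the summability step, which reduces to the quantitative polynomial decay $w(N^{-j}) \le C N^{-jp}$ for some $p > 0$; this is a standard consequence of O-regular variation at zero with positive lower Matuszewska index, and is precisely what Assumption \textbf{A} is designed to guarantee.
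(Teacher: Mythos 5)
Your proof is correct and follows essentially the same route as the paper's: the block bound $\left\vert u\ast \varphi _{j}\right\vert _{0}\leq w\left( N^{-j}\right) ^{\beta }\left\vert u\right\vert _{\beta ,\infty }$, summability of $\sum_{j}w\left( N^{-j}\right) ^{\beta }$ (which is exactly Corollary \ref{ac1}, a consequence of $p_{1}>0$ guaranteed by \textbf{A}), and identification of the uniform limit with $u$ via the Littlewood--Paley identity in $\mathcal{S}^{\prime }$. The only cosmetic difference is that you derive the summability from a Potter-type bound $w\left( N^{-j}\right) \leq CN^{-jp}$ with $0<p<p_{1}$, whereas the paper obtains it from the integral comparison in Lemma \ref{al1}a); both are standard, equivalent consequences of O-regular variation at zero with positive lower index.
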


\begin{proof}
Note that $u\ast \varphi _{j}$ is continuous of moderate growth and $%
\sum_{j=0}^{\infty }u\ast \varphi _{j}=u$ in $\mathcal{S}^{\prime }\left( 
\mathbf{R}^{d}\right) $. Obviously, by Corollary \ref{ac1} in Appendix, 
\begin{eqnarray*}
\sum_{j=0}^{\infty }\left\vert u\ast \varphi _{j}\right\vert _{0}
&=&\sum_{j=0}^{\infty }w\left( N^{-j}\right) ^{\beta }w\left( N^{-j}\right)
^{-\beta }\left\vert u\ast \varphi _{j}\right\vert _{0} \\
&\leq &\sup_{j\geq 0}w\left( N^{-j}\right) ^{-\beta }\left\vert u\ast
\varphi _{j}\right\vert _{0}\sum_{j=0}^{\infty }w\left( N^{-j}\right)
^{\beta } \\
&\leq &C\left\vert u\right\vert _{\beta ,\infty }\sum_{j=0}^{\infty }w\left(
N^{-j}\right) ^{\beta }\leq C\left\vert u\right\vert _{\beta ,\infty }.
\end{eqnarray*}
\end{proof}

Let $\nu \in \mathfrak{A}^{\alpha }$, $w=w_{\nu },\beta >0$. For $u\in
C_{b}^{\infty }\left( \mathbf{R}^{d}\right) $, set 
\begin{equation*}
\left\vert u\right\vert _{0}=\sup_{x}\left\vert u\left( x\right) \right\vert
,~\left[ u\right] _{\beta }=\sup_{x,h\neq 0}\frac{\left\vert u\left(
x+h\right) -u\left( x\right) \right\vert }{w\left( \left\vert h\right\vert
\right) ^{\beta }},
\end{equation*}%
and%
\begin{equation*}
\left\vert u\right\vert _{\beta }:=\left\vert u\right\vert _{0}+\left[ u%
\right] _{\beta }.
\end{equation*}

\begin{proposition}
\label{pr2}Let $\nu \in \mathfrak{A}^{\alpha }$, $w=w_{\nu }$ be an O-RV
function at zero so that \textbf{A} and\textbf{\ C} hold for it. Let $\beta
\in \left( 0,q_{1}^{-1}\right) .$ Then the norm $\left\vert u\right\vert
_{\beta }$ and norm $\left\vert u\right\vert _{\beta ,\infty }$ are
equivalent on $C_{b}^{\infty }\left( \mathbf{R}^{d}\right) $. Namely, there
is $C>0$ depending only on $d,\beta ,N$ such that 
\begin{equation*}
C^{-1}\left\vert u\right\vert _{\beta }\leq \left\vert u\right\vert _{\beta
,\infty }\leq C\left\vert u\right\vert _{\beta },u\in C_{b}^{\infty }\left( 
\mathbf{R}^{d}\right) .
\end{equation*}
\end{proposition}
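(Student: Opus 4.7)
The plan is to establish the two inequalities separately by working with the Littlewood--Paley decomposition $u = \sum_{j \geq 0} u*\varphi_j$ (Lemma \ref{equiv}) and invoking O-RV (Potter-type) bounds for $w$ from the Appendix.

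\emph{Upper bound $|u|_{\beta,\infty} \leq C|u|_\beta$.} For $j \geq 1$, the factor $\phi(N^{-j}\xi)$ vanishes at $\xi = 0$, so $\int \varphi_j\,dy = 0$ and
\[
(u*\varphi_j)(x) = \int [u(x-y)-u(x)]\,\varphi_j(y)\,dy,
\]
whence $|u*\varphi_j|_0 \leq [u]_\beta \int w(|y|)^\beta|\varphi_j(y)|\,dy$. Via the scaling $\varphi_j(y) = N^{jd}\psi(N^j y)$ with $\psi = \mathcal{F}^{-1}\phi \in \mathcal{S}(\mathbf{R}^d)$, the change of variables $z = N^j y$ reduces the integral to $\int w(N^{-j}|z|)^\beta |\psi(z)|\,dz$. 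Applying a Potter-type bound for the O-RV function $w$, one has $w(N^{-j}|z|)^\beta \leq C\, w(N^{-j})^\beta \bigl(|z|^{\beta(p_1-\varepsilon)} \vee |z|^{\beta(q_1+\varepsilon)}\bigr)$, so the integral is bounded by $C\, w(N^{-j})^\beta$ times a Schwartz integral in $z$. The case $j = 0$ is handled trivially via $|u*\varphi_0|_0 \leq \|\varphi_0\|_{L^1}|u|_0$.

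\emph{Lower bound $|u|_\beta \leq C|u|_{\beta,\infty}$.} The estimate $|u|_0 \leq C|u|_{\beta,\infty}$ is already in Lemma \ref{equiv}. For the H\"older seminorm, fix $h \neq 0$ sufficiently small and choose $j_0$ with $N^{-j_0 - 1} < |h| \leq N^{-j_0}$; then
\[
|u(x+h)-u(x)| \leq \sum_{j \leq j_0} |(u*\varphi_j)(x+h) - (u*\varphi_j)(x)| + 2\sum_{j > j_0} |u*\varphi_j|_0.
\]
For the high-frequency tail I bound each term by $2|u*\varphi_j|_0 \leq 2|u|_{\beta,\infty} w(N^{-j})^\beta$; for the low-frequency head I combine the mean value theorem with a Bernstein-type inequality $|\nabla(u*\varphi_j)|_0 \leq C N^j |u*\varphi_j|_0$, obtained by writing $u*\varphi_j = (u*\varphi_j)*\tilde\varphi_j$ for a fattened cutoff $\tilde\varphi_j$ supported at the same dyadic scale as $\varphi_j$.

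Both resulting sums must be controlled by $C\, w(|h|)^\beta |u|_{\beta,\infty}$. The tail is handled by the Potter bound $w(N^{-j_0-k}) \leq C N^{-k(p_1-\varepsilon)} w(N^{-j_0})$ together with $p_1 > 0$, giving geometric convergence. For the head, setting $k = j_0 - j$ and using $|h| \leq N^{-j_0}$ together with $w(N^{-j_0+k}) \leq C N^{k(q_1+\varepsilon)} w(N^{-j_0})$, one arrives at
\[
|h|\sum_{j \leq j_0} N^j w(N^{-j})^\beta \leq C\, w(N^{-j_0})^\beta \sum_{k \geq 0} N^{-k(1-\beta(q_1+\varepsilon))}.
\]
This is exactly where the hypothesis $\beta < q_1^{-1}$ is decisive: one chooses $\varepsilon > 0$ so small that $\beta(q_1+\varepsilon) < 1$, forcing the geometric series to converge. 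A final application of Potter converts $w(N^{-j_0})$ into $C\, w(|h|)$, and large $|h|$ are handled directly by the bound on $|u|_0$. The main obstacle I anticipate is the careful bookkeeping of Potter constants at the two scales and verifying that the precise summability threshold is indeed governed sharply by the upper O-RV index $q_1$.
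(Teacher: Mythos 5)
Your architecture is exactly the paper's: the vanishing-moment trick $\int\varphi_j\,dy=0$ plus a change of variables for the upper bound, and for the lower bound the fattened cutoffs $\varphi_j=\varphi_j\ast\tilde\varphi_j$ giving $|u\ast\varphi_j(x+h)-u\ast\varphi_j(x)|\leq C(N^{j}|h|\wedge 1)|u\ast\varphi_j|_0$, followed by the head/tail split at $|h|\sim N^{-j_0}$. Your identification of where the hypotheses bite is also correct and matches the paper: $\beta q_1<1$ makes the head series $\sum_k N^{-k(1-\beta(q_1+\varepsilon))}$ converge, and $p_1>0$ handles the tail. The only cosmetic difference is that you use pointwise Potter bounds where the paper packages the same information as integral comparisons (Lemma \ref{al1}); in the lower bound all arguments $N^{-j}$, $j\geq 0$, stay in $(0,1]$, so your Potter bounds are legitimate there and that half of the argument is sound.

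There is, however, one step in your upper bound that fails as written. After the substitution $z=N^{j}y$ you apply the Potter-type bound $w(N^{-j}|z|)^{\beta}\leq C\,w(N^{-j})^{\beta}\bigl(|z|^{\beta(p_1-\varepsilon)}\vee|z|^{\beta(q_1+\varepsilon)}\bigr)$ for \emph{all} $z$. O-regular variation of $w$ \emph{at zero} gives no control whatsoever over $w$ on $(1,\infty)$, so for $|z|>N^{j}$ (where $N^{-j}|z|>1$) this inequality has no justification: $w$ could grow arbitrarily fast at infinity without violating the O-RV hypothesis at zero, and the integral $\int_{|z|>N^j}w(N^{-j}|z|)^{\beta}|\psi(z)|\,dz$ could diverge. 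This is precisely the point where hypothesis \textbf{C} (which is part of the statement you are proving, and which your proof never uses) must enter: it guarantees $\int_{1}^{\infty}w(r)^{\beta}r^{-(N_0-d)}\frac{dr}{r}<\infty$ for $\beta\leq q_1^{-1}$, which, combined with the rapid decay $|\psi(z)|\leq C(1+|z|)^{-N_0}$ and the elementary bound $N^{-j(N_0-d)}\leq Cw(N^{-j})^{\beta}$ from Lemma \ref{al1}, controls the outer region. So you need to split the $z$-integral at $|z|=N^{j}$ (or equivalently the $r$-integral at $r=1$ after passing to $r=N^{-j}|z|$), use Potter/Lemma \ref{al1} only on the inner piece, and invoke \textbf{C} on the outer piece. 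With that repair the proof coincides with the paper's.
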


\begin{proof}
Let $u\in C_{b}^{\infty }\left( \mathbf{R}^{d}\right) $. Then, by Lemma \ref%
{al1}, $\left\vert u\right\vert _{\beta }<\infty $. If $j=0$, then 
\begin{equation*}
\left\vert u\ast \varphi _{0}\right\vert _{0}\leq \left\vert u\right\vert
_{0}\int \left\vert \varphi _{0}\left( y\right) \right\vert dy\leq
C\left\vert u\right\vert _{\beta }.
\end{equation*}%
If $j\neq 0$, then by the construction of $\varphi _{j}$, $\int \varphi
_{j}\left( y\right) dy=\hat{\varphi}_{j}\left( 0\right) =0$. Therefore,
denoting $\varphi =\mathcal{F}^{-1}\phi $, 
\begin{eqnarray*}
&&\left\vert u\ast \varphi _{j}\right\vert _{0} \\
&=&\left\vert \int \left[ u\left( y\right) -u\left( x\right) \right] \varphi
_{j}\left( x-y\right) dy\right\vert _{0} \\
&\leq &\left[ u\right] _{\beta }\int w\left( \left\vert y-x\right\vert
\right) ^{\beta }N^{jd}\left\vert \varphi \left( N^{j}\left( x-y\right)
\right) \right\vert dy \\
&=&\left[ u\right] _{\beta }\int w\left( N^{-j}\left\vert y\right\vert
\right) ^{\beta }\left\vert \varphi \left( y\right) \right\vert dy.
\end{eqnarray*}%
Since for $N_{0}>d+1,$%
\begin{equation*}
\left\vert \varphi \left( y\right) \right\vert \leq C\left( 1+\left\vert
y\right\vert \right) ^{-N_{0}},y\in \mathbf{R}^{d}\mathbf{,}
\end{equation*}%
for some $C>0,$ we have 
\begin{eqnarray*}
&&\int w\left( N^{-j}\left\vert y\right\vert \right) ^{\beta }\left\vert
\varphi \left( y\right) \right\vert dy \\
&\leq &C\int_{0}^{1}w\left( N^{-j}\left\vert y\right\vert \right) ^{\beta
}dy+C\int_{1}^{\infty }w\left( N^{-j}\left\vert y\right\vert \right) ^{\beta
}\left\vert y\right\vert ^{-N_{0}}dy=A_{1}+A_{2}.
\end{eqnarray*}

By Lemma \ref{al1} 
\begin{equation*}
N^{-j(N_{0}-d)}\leq Cw\left( N^{-j}\right) ^{\beta },j\geq 0,
\end{equation*}%
and, 
\begin{eqnarray*}
A_{1} &\leq &CN^{jd}\int_{0}^{N^{-j}}w\left( r\right) ^{\beta }r^{d}\frac{dr%
}{r}\leq Cw\left( N^{-j}\right) ^{\beta }, \\
A_{2} &=&CN^{-j(N_{0}-d)}\int_{N^{-j}}^{\infty }w\left( r\right) ^{\beta
}r^{-(N_{0}-d)}\frac{dr}{r} \\
&=&CN^{-j(N_{0}-d)}\int_{N^{-j}}^{1}w\left( r\right) ^{\beta }r^{-(N_{0}-d)}%
\frac{dr}{r}+N^{-j(N_{0}-d)}\int_{1}^{\infty }w\left( r\right) ^{\beta
}r^{-(N_{0}-d)}\frac{dr}{r} \\
&\leq &Cw\left( N^{-j}\right) ^{\beta },j\geq 0.
\end{eqnarray*}%
That is to say $\left\vert u\right\vert _{\beta ,\infty }\leq C\left\vert
u\right\vert _{\beta },u\in C_{b}\left( \mathbf{R}^{d}\right) $ for some
constant $C\left( \beta ,d\right) >0$.

Let $\tilde{\phi},\tilde{\phi}_{0}\in C_{0}^{\infty }\left( \mathbf{R}%
^{d}\right) $, be such that $0\notin $supp$\left( \tilde{\phi}\right) ,$ $%
\tilde{\phi}\phi =\phi ,\tilde{\phi}_{0}\phi _{0}=\phi _{0}$, where $\phi
_{0}=\mathcal{F}\varphi _{0},$ and $\phi ,\varphi _{0}$ are the functions
introduced in (\ref{j0}), (\ref{j1}). Let 
\begin{eqnarray}
\tilde{\varphi}=\mathcal{F}^{-1}\tilde{\phi}, &&\tilde{\varphi _{j}}=%
\mathcal{F}^{-1}\tilde{\phi}\left( N^{-j}\cdot \right) ,j\geq 1,
\label{ssch} \\
&&\tilde{\varphi}_{0}=\mathcal{F}^{-1}\tilde{\phi}_{0}.  \label{sch}
\end{eqnarray}%
Hence 
\begin{equation*}
\varphi _{j}=\varphi _{j}\ast \tilde{\varphi}_{j},j\geq 0,
\end{equation*}%
where in particular, 
\begin{equation*}
\tilde{\varphi}_{j}\left( x\right) =N^{jd}\tilde{\varphi}\left(
N^{j}x\right) ,j\geq 1,x\in \mathbf{R}^{d}.
\end{equation*}

Obviously, 
\begin{eqnarray*}
&&\left\vert u\ast \varphi _{0}\left( x+y\right) -u\ast \varphi _{0}\left(
x\right) \right\vert  \\
&\leq &\int \left\vert \tilde{\varphi}_{0}\left( x+y-z\right) -\tilde{\varphi%
}_{0}\left( x-z\right) \right\vert \left\vert u\ast \varphi _{0}\left(
z\right) \right\vert dz \\
&\leq &C\left( \left\vert y\right\vert \wedge 1\right) \left\vert u\ast
\varphi _{0}\right\vert _{0},~x,y\in \mathbf{R}^{d},
\end{eqnarray*}%
and 
\begin{eqnarray*}
&&\left\vert u\ast \varphi _{j}\left( x+y\right) -u\ast \varphi _{j}\left(
x\right) \right\vert  \\
&\leq &N^{jd}\int \left\vert \tilde{\varphi}\left( N^{j}\left( x+y-z\right)
\right) -\tilde{\varphi}\left( N^{j}\left( x-z\right) \right) \right\vert
\left\vert u\ast \varphi _{j}\left( z\right) \right\vert dz \\
&\leq &C\left( \left\vert N^{j}y\right\vert \wedge 1\right) \left\vert u\ast
\varphi _{j}\right\vert _{0},j\geq 1,~x,y\in \mathbf{R}^{d}.
\end{eqnarray*}%
By Lemma \ref{equiv}, for $x,y\in \mathbf{R}^{d},$ 
\begin{eqnarray*}
&&\left\vert u\left( x+y\right) -u\left( x\right) \right\vert  \\
&\leq &\sum_{j=0}^{\infty }\left\vert u\ast \varphi _{j}\left( x+y\right)
-u\ast \varphi _{j}\left( x\right) \right\vert \leq C\sum_{j=0}^{\infty
}\left( N^{j}\left\vert y\right\vert \wedge 1\right) \left\vert u\ast
\varphi _{j}\right\vert _{0}.
\end{eqnarray*}%
Let $\beta q_{1}<1,k\in \mathbf{N}$. For $\left\vert y\right\vert \in
(N^{-k-1},N^{-k}],$ 
\begin{eqnarray*}
\left\vert u\left( x+y\right) -u\left( x\right) \right\vert  &\leq
&C\left\vert u\right\vert _{\beta ,\infty }\sup_{\left\vert y\right\vert
\leq N^{-k}}\sum_{j=0}^{\infty }\left( N^{j}\left\vert y\right\vert \wedge
1\right) w\left( N^{-j}\right) ^{\beta } \\
&\leq &C\left\vert u\right\vert _{\beta ,\infty }\left[
\sum_{j=0}^{k}N^{j-k}w\left( N^{-j}\right) ^{\beta }+\sum_{j=k+1}^{\infty
}w\left( N^{-j}\right) ^{\beta }\right] .
\end{eqnarray*}

Then, by Lemma \ref{al1}, 
\begin{eqnarray*}
N^{-k}\sum_{j=0}^{k}N^{j}w\left( N^{-j}\right) ^{\beta } &\leq
&C_{2}N^{-k}\int_{0}^{k+1}N^{x}w\left( N^{-x}\right) ^{\beta }dx \\
&\leq &CN^{-k}\int_{N^{-k-1}}^{1}x^{-1}w\left( x\right) ^{\beta }\frac{dx}{x}%
\leq Cw\left( \left\vert y\right\vert \right) ^{\beta }.
\end{eqnarray*}%
Again, by Lemma \ref{al1}, 
\begin{equation*}
\sum_{j=k+1}^{\infty }w\left( N^{-j}\right) ^{\beta }\leq
C\int_{k+1}^{\infty }w\left( N^{-x}\right) ^{\beta }dx\leq
C\int_{0}^{N^{-k-1}}w\left( x\right) ^{\beta }\frac{dx}{x}\leq Cw\left(
\left\vert y\right\vert \right) ^{\beta }.
\end{equation*}%
The statement is proved.\noindent
\end{proof}

\subsubsection{Equivalent norms on $C_{b}^{\infty }\left( \mathbf{R}%
^{d}\right) $}

Now we will introduce some other norms on $C_{b}^{\infty }\left( \mathbf{R}%
^{d}\right) $ involving the powers of the operators $L^{\nu },I-L^{\nu }$: 
\begin{eqnarray*}
\left\vert u\right\vert _{\nu ,\kappa ,\beta } &=&\left\vert u\right\vert
_{\kappa ,\beta }=\left\vert u\right\vert _{0}+\left\vert L^{\nu ;\kappa
}u\right\vert _{\beta ,\infty },u\in C_{b}^{\infty }\left( \mathbf{R}%
^{d}\right) , \\
\left\vert \left\vert u\right\vert \right\vert _{\nu ;\kappa ,\beta }
&=&\left\vert \left\vert u\right\vert \right\vert _{\kappa ,\beta
}=\left\vert \left( I-L^{\nu }\right) ^{\kappa }u\right\vert _{\beta ,\infty
},u\in C_{b}^{\infty }\left( \mathbf{R}^{d}\right) ,
\end{eqnarray*}%
with $\kappa ,\beta >0$, $L^{\nu ;\kappa }=\left( L^{\nu }\right) ^{\kappa }$%
, and $\nu $ satisfying \textbf{A} and \textbf{B}. In addition, we assume
that $\nu \in \mathfrak{A}_{sym}^{\alpha }=\left\{ \mu \in \mathfrak{A}%
^{\alpha }:\mu =\mu _{sym}\right\} $ if $\kappa $ is fractional. First, we
define those powers and corresponding norms on $C_{b}^{\infty }\left( 
\mathbf{R}^{d}\right) $. Then we study their relations and extend them to $%
\tilde{C}_{\infty \infty }^{\kappa +\beta }\left( R^{d}\right) .$

For $\nu \in \mathfrak{A}_{sym}^{\alpha },\kappa \in (0,1),a\geq 0,$ and $%
f\in \mathcal{S}\left( \mathbf{R}^{d}\right) $, we see easily that 
\begin{eqnarray*}
&&\left( a-\psi ^{\nu }\left( \xi \right) \right) ^{\kappa }\hat{f}\left(
\xi \right)  \\
&=&c_{\kappa }\int_{0}^{\infty }t^{-\kappa }\left[ e^{-at}\exp \left( \psi
^{\nu }\left( \xi \right) t\right) -1\right] \frac{dt}{t}\hat{f}\left( \xi
\right) ,\xi \in \mathbf{R}^{d},
\end{eqnarray*}%
and define%
\begin{eqnarray}
&&\left( aI-L^{\nu }\right) ^{\kappa }f\left( x\right)   \label{ff01} \\
&=&\mathcal{F}^{-1}\left[ \left( a-\psi ^{\nu }\right) ^{\kappa }\hat{f}%
\right] \left( x\right)   \notag \\
&=&c_{\kappa }\mathbf{E}\int_{0}^{\infty }t^{-\kappa }\left[ e^{-at}f\left(
x+Z_{t}^{\nu }\right) -f\left( x\right) \right] \frac{dt}{t},x\in \mathbf{R}%
^{d},  \notag
\end{eqnarray}%
where%
\begin{equation*}
c_{\kappa }=\left( \int_{0}^{\infty }\left( e^{-t}-1\right) t^{-\kappa }%
\frac{dt}{t}\right) ^{-1}.
\end{equation*}%
We denote, with $a=0,f\in \mathcal{S}\left( \mathbf{R}^{d}\right) ,\kappa
\in \left( 0,1\right) ,$%
\begin{equation*}
L^{\nu ;\kappa }f:=\mathcal{F}^{-1}\left[ -\left( -\psi ^{\nu }\right)
^{\kappa }\hat{f}\right] .
\end{equation*}%
For $f\in C_{b}^{\infty }\left( \mathbf{R}^{d}\right) ,\kappa \in \left(
0,1\right) ,a\geq 0,$ we define%
\begin{equation*}
\left( aI-L^{\nu }\right) ^{\kappa }f\left( x\right) =c_{\kappa }\mathbf{E}%
\int_{0}^{\infty }t^{-\kappa }\left[ e^{-at}f\left( x+Z_{t}^{\nu }\right)
-f\left( x\right) \right] \frac{dt}{t},x\in \mathbf{R}^{d}.
\end{equation*}%
For $\kappa =1,$ $\left( aI-L^{\nu }\right) ^{1}f=\left( aI-L^{\nu }\right)
f=af-L^{\nu }f,f\in C_{b}^{\infty }\left( \mathbf{R}^{d}\right) .$ Note that
for $\kappa \in \left( 0,1\right) ,a\geq 0,$%
\begin{eqnarray}
&&\left( aI-L^{\nu }\right) ^{\kappa }f\left( x\right)   \label{ff2} \\
&=&c_{\kappa }\mathbf{E}\int_{1}^{\infty }t^{-\kappa }\left[ e^{-at}f\left(
x+Z_{t}^{\nu }\right) -f\left( x\right) \right] \frac{dt}{t}  \notag \\
&&+c_{\kappa }\mathbf{E}\int_{0}^{1}t^{-\kappa
}\int_{0}^{t}e^{-as}(-a+L^{\nu })f\left( x+Z_{s}^{\nu }\right) ds\frac{dt}{t}%
,%
\begin{array}{c}
x\in \mathbf{R}^{d}.%
\end{array}
\notag
\end{eqnarray}%
For $a>0,f\in C_{b}^{\infty }\left( \mathbf{R}^{d}\right) ,$ set 
\begin{equation*}
\left( aI-L^{\nu }\right) ^{-\kappa }f\left( x\right) =c_{\kappa }^{\prime
}\int_{0}^{\infty }t^{\kappa }e^{-at}\mathbf{E}f\left( x+Z_{t}^{\nu }\right) 
\frac{dt}{t},x\in \mathbf{R}^{d},
\end{equation*}%
where%
\begin{equation*}
c_{\kappa }^{\prime }=\left( \int_{0}^{\infty }t^{\kappa }e^{-t}\frac{dt}{t}%
\right) ^{-1},
\end{equation*}%
and $\nu \in \mathfrak{A}_{sym}^{\alpha },\kappa >0,$ or $\nu \in \mathfrak{A%
}^{\alpha },\kappa \in \mathbf{N}$.

Note that for $g\in \mathcal{S}\left( \mathbf{R}^{d}\right) ,$%
\begin{eqnarray*}
\mathcal{F}\left[ \left( aI-L^{\nu }\right) ^{-\kappa }g\right] &=&\left(
a-\psi ^{\nu }\right) ^{-\kappa }\hat{g},a>0,\kappa >0, \\
\mathcal{F}\left[ \left( aI-L^{\nu }\right) ^{\kappa }g\right] &=&\left(
a-\psi ^{\nu }\right) ^{\kappa }\hat{g},a\geq 0,\kappa \in (0,1].
\end{eqnarray*}

We use the formulas above to define $\left( a-L^{\nu }\right) ^{\kappa
},a\geq 0,\kappa =1,0,-1,\ldots ,$ for $\nu \in \mathfrak{A}^{\alpha }.$

\begin{remark}
\label{re0}Assume $\kappa \in (0,1],a\geq 0$ or $\kappa \in (-\infty ,0),a>0$%
. It is easy to see that

a) for any $f\in C_{b}^{\infty }\left( \mathbf{R}^{d}\right) $, we have $%
\left( aI-L^{\nu }\right) ^{\kappa }f\in C_{b}^{\infty }\left( \mathbf{R}%
^{d}\right) $ and for any multiindex $\gamma ,$ $D^{\gamma }\left( aI-L^{\nu
}\right) ^{\kappa }f=\left( aI-L^{\nu }\right) ^{\kappa }D^{\gamma }f$,$\nu
\in \mathfrak{A}_{sym}^{\alpha }.$ The same holds for $\nu \in \mathfrak{A}%
^{\alpha }$ and $\kappa =1,0,-1,\ldots .$

b) for any $f\in C_{b}^{\infty }\left( \mathbf{R}^{d}\right) $ such that for
any multiindex $\gamma $, $D^{\gamma }f\in L^{1}\left( \mathbf{R}^{d}\right)
\cap L^{2}\left( \mathbf{R}^{d}\right) $, we have 
\begin{eqnarray*}
\mathcal{F}\left[ \left( aI-L^{\nu }\right) ^{-\kappa }f\right]  &=&\left(
a-\psi ^{\nu }\right) ^{-\kappa }\hat{f},a>0,\kappa >0, \\
\mathcal{F}\left[ \left( aI-L^{\nu }\right) ^{\kappa }f\right]  &=&\left(
a-\psi ^{\nu }\right) ^{\kappa }\hat{f},a\geq 0,\kappa \in (0,1],
\end{eqnarray*}%
for $\nu \in \mathfrak{A}_{sym}^{\alpha }.$ The same holds for $\nu \in 
\mathfrak{A}^{\alpha }$ and $\kappa =1,0,-1,\ldots .$
\end{remark}

The following obvious claim holds.

\begin{lemma}
\label{le1}Let $\nu \in \mathfrak{A}_{sym}^{\alpha }$. Assume $\kappa \in
(0,1],a\geq 0$ or $\kappa \in (-\infty ,0),a>0$. Let $f,f_{n}\in
C_{b}^{\infty }\left( \mathbf{R}^{d}\right) $\ be so that for any multiindex 
$\gamma ,$ $D^{\gamma }f_{n}\rightarrow D^{\gamma }f$ as $n\rightarrow
\infty $ uniformly on compact subsets of $\mathbf{R}^{d}$ and 
\begin{equation*}
\sup_{x,n}\left\vert D^{\gamma }f_{n}\left( x\right) \right\vert <\infty .
\end{equation*}%
Then for any multiindex $\gamma ,$%
\begin{equation*}
D^{\gamma }\left( aI-L^{\nu }\right) ^{\kappa }f_{n}=\left( aI-L^{\nu
}\right) ^{\kappa }D^{\gamma }f_{n}\rightarrow D^{\gamma }\left( aI-L^{\nu
}\right) ^{\kappa }f=\left( aI-L^{\nu }\right) ^{\kappa }D^{\gamma }f
\end{equation*}%
uniformly on compact subsets of $\mathbf{R}^{d}$, and%
\begin{equation*}
\sup_{x,n}\left\vert \left( aI-L^{\nu }\right) ^{\kappa }D^{\gamma
}f_{n}\left( x\right) \right\vert <\infty .
\end{equation*}

The same holds for $\nu \in \mathfrak{A}^{\alpha }$ and $\kappa
=1,0,-1,\ldots .$
\end{lemma}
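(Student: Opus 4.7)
The commutation identity $D^{\gamma}(aI-L^{\nu})^{\kappa}f_{n} = (aI-L^{\nu})^{\kappa}D^{\gamma}f_{n}$ is already recorded in Remark \ref{re0}(a), so only the uniform-on-compacts convergence of $(aI-L^{\nu})^{\kappa}D^{\gamma}f_{n}$ to $(aI-L^{\nu})^{\kappa}D^{\gamma}f$ and the bound $\sup_{n,x}|(aI-L^{\nu})^{\kappa}D^{\gamma}f_{n}(x)|<\infty$ remain. The plan is to apply dominated convergence directly to the explicit representations of $(aI-L^{\nu})^{\kappa}$ in each regime of $\kappa$: the split form (\ref{ff2}) for $\kappa\in(0,1)$, the trivial identity $(aI-L^{\nu})g=ag-L^{\nu}g$ for $\kappa=1$, and the formula via $c'_{|\kappa|}\int_{0}^{\infty}t^{|\kappa|}e^{-at}\mathbf{E}g(x+Z_{t}^{\nu})\,dt/t$ for $\kappa<0$.

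Two preliminary facts are needed. First, the hypotheses on $D^{\gamma}f_{n}$ upgrade at once to the corresponding statement for $L^{\nu}D^{\gamma}f_{n}$:
\[
\sup_{n,x}|L^{\nu}D^{\gamma}f_{n}(x)|<\infty,\qquad L^{\nu}D^{\gamma}f_{n}\to L^{\nu}D^{\gamma}f\text{ uniformly on compacts.}
\]
To see this, split the $\nu$-integral defining $L^{\nu}D^{\gamma}f_{n}$ into $\{|y|\le 1\}$ and $\{|y|>1\}$. On the small-$y$ piece a Taylor estimate dominates the integrand by $C|y|^{2}$ times the uniform bound on second-order derivatives of $f_{n}$ (or by $C|y|$ times the gradient bound when $\alpha<1$), which is $\nu$-integrable because $\int|y|^{2}\wedge 1\,\nu(dy)<\infty$; on the large-$y$ piece the integrand is controlled by $\sup_{n}|D^{\gamma}f_{n}|_{0}$ and the finiteness of $\nu(\{|y|>1\})$, with the moment hypothesis at infinity (for $\alpha>1$) or the symmetric cancellation (for $\alpha=1$) absorbing the gradient correction. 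Dominated convergence within each piece yields the claimed convergence. Second, for each fixed $t>0$ the map $x\mapsto\mathbf{E}[D^{\gamma}f_{n}(x+Z_{t}^{\nu})]$ converges to $\mathbf{E}[D^{\gamma}f(x+Z_{t}^{\nu})]$ uniformly on every compact $K$: given $\varepsilon>0$, tightness of $Z_{t}^{\nu}$ allows one to pick $R$ with $\mathbf{P}(|Z_{t}^{\nu}|>R)<\varepsilon$, apply the uniform bound $\sup_{n}|D^{\gamma}f_{n}|_{0}$ off $\{|Z_{t}^{\nu}|\le R\}$, and use uniform-on-compacts convergence of $D^{\gamma}f_{n}$ on the compact $K+\overline{B_{R}}$.

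With these in hand the outer dominated convergence in $t$ is routine. For $\kappa\in(0,1)$ in (\ref{ff2}) the integrand on $[1,\infty)$ is dominated by $2\sup_{n,x}|D^{\gamma}f_{n}(x)|\,t^{-\kappa-1}$, and on $[0,1]$ by $C(1+a)\sup_{n}(|D^{\gamma}f_{n}|_{0}+|L^{\nu}D^{\gamma}f_{n}|_{0})\,t^{-\kappa}$, both integrable. For $\kappa=1$ the conclusion reduces to the first preliminary fact. For $\kappa<0$ with $a>0$ the exponential factor makes $t^{|\kappa|-1}e^{-at}$ integrable and dominated convergence applies directly. In every case the same dominators give $\sup_{n,x}|(aI-L^{\nu})^{\kappa}D^{\gamma}f_{n}(x)|<\infty$. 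The only substantive issue is the singularity $t^{-\kappa-1}$ at $0$ for $\kappa\in(0,1)$, which the naive bound $2|f|_{0}$ cannot absorb — this is precisely why the split representation (\ref{ff2}) is used instead of (\ref{ff01}), trading the singularity for an extra power of $t$ at the cost of involving $L^{\nu}f_{n}$; once this is resolved the proof collapses to bookkeeping.
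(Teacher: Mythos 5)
Your argument is correct. The paper itself offers no proof of this lemma --- it is introduced with ``The following obvious claim holds'' --- but the representations it sets up immediately beforehand (the split formula (\ref{ff2}) for $\kappa\in(0,1)$, the direct definition for $\kappa=1$, and the $\Gamma$-integral for negative powers) are exactly the ones your dominated-convergence argument runs on, so you have supplied precisely the omitted details, including the one genuinely delicate point: that the naive representation (\ref{ff01}) has a non-integrable singularity $t^{-\kappa-1}$ at $t=0$ and must be traded for (\ref{ff2}), which in turn forces you to first establish the uniform bounds and uniform-on-compacts convergence of $L^{\nu}D^{\gamma}f_{n}$.
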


\begin{remark}
\label{re1}Given $f\in C_{b}^{\infty }\left( \mathbf{R}^{d}\right) $ there
is a sequence $f_{n}\in C_{0}^{\infty }\left( \mathbf{R}^{d}\right) $ so
that for any multiindex $\gamma ,$ $D^{\gamma }f_{n}\rightarrow D^{\gamma }f$
as $n\rightarrow \infty $ uniformly on compact subsets of $\mathbf{R}^{d}$
and 
\begin{equation*}
\sup_{x,n}\left\vert D^{\gamma }f_{n}\left( x\right) \right\vert <\infty .
\end{equation*}

Indeed, choose $g\in C_{b}^{\infty }\left( \mathbf{R}^{d}\right) ,0\leq
g\leq 1,g\left( x\right) =1$ if $\left\vert x\right\vert \leq 1$, and
\thinspace $g\left( x\right) =0$ if $\left\vert x\right\vert >2$. Given $%
f\in C_{b}^{\infty }\left( \mathbf{R}^{d}\right) ,$ take 
\begin{equation*}
f_{n}\left( x\right) =f\left( x\right) g\left( x/n\right) ,x\in \mathbf{R}%
^{d},n\geq 1.
\end{equation*}
\end{remark}

\begin{lemma}
\label{le2}Let $\nu \in \mathfrak{A}_{sym}^{\alpha }$. Assume $a>0,\kappa
\in (0,1]$. Then $\left( aI-L^{\nu }\right) ^{\kappa }:C_{b}^{\infty }\left( 
\mathbf{R}^{d}\right) \rightarrow C_{b}^{\infty }\left( \mathbf{R}%
^{d}\right) $ is bijective whose inverse is $\left( aI-L^{\nu }\right)
^{-\kappa }:$%
\begin{equation*}
\left( aI-L^{\nu }\right) ^{\kappa }\left( aI-L^{\nu }\right) ^{-\kappa
}f\left( x\right) =\left( aI-L^{\nu }\right) ^{-\kappa }\left( aI-L^{\nu
}\right) ^{\kappa }f\left( x\right) =f\left( x\right) ,x\in \mathbf{R}^{d},
\end{equation*}%
for any $f\in C_{b}^{\infty }\left( \mathbf{R}^{d}\right) .$
\end{lemma}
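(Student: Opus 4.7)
The plan is to verify the two composition identities first on the Schwartz class $\mathcal{S}(\mathbf{R}^{d})$ by a Fourier multiplier computation, and then extend to arbitrary $f\in C_{b}^{\infty }(\mathbf{R}^{d})$ via the approximation of Remark \ref{re1} combined with the stability statement of Lemma \ref{le1}.

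\textbf{Step 1 (Schwartz case).} Fix $f\in \mathcal{S}(\mathbf{R}^{d})$ and set $g=(aI-L^{\nu})^{-\kappa }f$. Since $\nu $ is symmetric, $\psi ^{\nu }$ is real-valued with $-\psi ^{\nu }\geq 0$, so $a-\psi ^{\nu }\geq a>0$ and the symbol $(a-\psi ^{\nu })^{-\kappa }$ is bounded by $a^{-\kappa }$. I first check that $D^{\gamma }g\in L^{1}(\mathbf{R}^{d})\cap L^{2}(\mathbf{R}^{d})$ for every multi-index $\gamma $. Using the commutation $D^{\gamma }(aI-L^{\nu })^{-\kappa }=(aI-L^{\nu })^{-\kappa }D^{\gamma }$ (Remark \ref{re0}(a)) this reduces to showing $(aI-L^{\nu })^{-\kappa }h\in L^{1}\cap L^{2}$ for $h\in \mathcal{S}$. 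The $L^{2}$-bound follows from Plancherel, $\Vert (a-\psi ^{\nu })^{-\kappa }\hat{h}\Vert _{L^{2}}\leq a^{-\kappa }\Vert \hat{h}\Vert _{L^{2}}$; the $L^{1}$-bound follows from the probabilistic representation and translation invariance,
\begin{equation*}
\Vert (aI-L^{\nu })^{-\kappa }h\Vert _{L^{1}}\leq c_{\kappa }^{\prime }\Vert h\Vert _{L^{1}}\int_{0}^{\infty }t^{\kappa -1}e^{-at}\,dt<\infty .
\end{equation*}
Now apply Remark \ref{re0}(b) twice: $\mathcal{F}[g]=(a-\psi ^{\nu })^{-\kappa }\hat{f}$, and since the same integrability holds for $g$,
\begin{equation*}
\mathcal{F}[(aI-L^{\nu })^{\kappa }g]=(a-\psi ^{\nu })^{\kappa }\hat{g}=(a-\psi ^{\nu })^{\kappa }(a-\psi ^{\nu })^{-\kappa }\hat{f}=\hat{f},
\end{equation*}
whence $(aI-L^{\nu })^{\kappa }(aI-L^{\nu })^{-\kappa }f=f$. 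The reverse composition on $\mathcal{S}$ is obtained analogously.

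\textbf{Step 2 (Extension to $C_{b}^{\infty }$).} Given $f\in C_{b}^{\infty }(\mathbf{R}^{d})$, pick the approximating sequence $f_{n}\in C_{0}^{\infty }(\mathbf{R}^{d})\subset \mathcal{S}(\mathbf{R}^{d})$ of Remark \ref{re1}, so that $D^{\gamma }f_{n}\rightarrow D^{\gamma }f$ uniformly on compact sets with $\sup_{n,x}|D^{\gamma }f_{n}(x)|<\infty $ for every multi-index $\gamma $. Apply Lemma \ref{le1} with exponent $-\kappa \in (-\infty ,0)$, $a>0$: this yields $D^{\gamma }(aI-L^{\nu })^{-\kappa }f_{n}\rightarrow D^{\gamma }(aI-L^{\nu })^{-\kappa }f$ uniformly on compacts with a uniform sup-bound. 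The sequence $g_{n}=(aI-L^{\nu })^{-\kappa }f_{n}$ and its limit $g=(aI-L^{\nu })^{-\kappa }f$ then satisfy the hypotheses of Lemma \ref{le1} once again, now with exponent $\kappa \in (0,1]$ and $a\geq 0$, giving $(aI-L^{\nu })^{\kappa }g_{n}\rightarrow (aI-L^{\nu })^{\kappa }g$ uniformly on compacts. But by Step 1, $(aI-L^{\nu })^{\kappa }g_{n}=f_{n}\rightarrow f$ uniformly on compacts, so $(aI-L^{\nu })^{\kappa }g=f$. The reverse identity follows by the symmetric argument, yielding bijectivity.

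\textbf{Main obstacle.} The delicate point is Step 1: the intermediate function $g$ is a priori only in $C_{b}^{\infty }$ rather than $\mathcal{S}$, so one must explicitly verify the $L^{1}\cap L^{2}$ integrability needed to invoke the Fourier multiplier identity of Remark \ref{re0}(b). Here the symmetry assumption $\nu \in \mathfrak{A}_{sym}^{\alpha }$ is essential: it makes $a-\psi ^{\nu }\geq a>0$, so that the multiplier $(a-\psi ^{\nu })^{-\kappa }$ is real-valued, bounded above by $a^{-\kappa }$ and bounded away from zero, giving both the needed operator-norm bound in $L^{2}$ and the pointwise identity $(a-\psi ^{\nu })^{\kappa }(a-\psi ^{\nu })^{-\kappa }=1$. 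Once Step 1 is in hand, Step 2 is a pure continuity-on-compacts argument driven by Lemma \ref{le1}.
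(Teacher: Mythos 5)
Your proof is correct and follows essentially the same route as the paper, whose proof of this lemma is simply the remark that it follows from Lemma \ref{le1} and Remarks \ref{re0} and \ref{re1}; you have filled in exactly the details that combination presupposes (the Fourier-multiplier verification on $\mathcal{S}\left( \mathbf{R}^{d}\right) $ via Remark \ref{re0}(b), then the passage to $C_{b}^{\infty }\left( \mathbf{R}^{d}\right) $ by the cutoff approximation of Remark \ref{re1} and two applications of Lemma \ref{le1}).
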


\begin{proof}
It is an easy consequence of Lemma \ref{le1} and Remarks \ref{re0} and \ref%
{re1}.
\end{proof}

For an integer $k\in \mathbf{N}$, we define for $\nu \in \mathfrak{A}%
^{\alpha },$%
\begin{equation*}
\left( aI-L^{\nu }\right) ^{k}=\underset{k\text{ times}}{\underbrace{\left(
aI-L^{\nu }\right) \ldots \left( aI-L^{\nu }\right) }}.
\end{equation*}

For a non integer $\kappa >0,\kappa =\left[ \kappa \right] +s$ with $s\in
\left( 0,1\right) $ and $\nu \in \mathfrak{A}_{sym}^{\alpha }$, we set 
\begin{eqnarray*}
\left( aI-L^{\nu }\right) ^{\kappa }f &=&\left( aI-L^{\nu }\right) ^{\left[
\kappa \right] }\left( aI-L^{\nu }\right) ^{s}f \\
&=&\left( aI-L^{\nu }\right) ^{s}\left( aI-L^{\nu }\right) ^{\left[ \kappa %
\right] }f,f\in C_{b}^{\infty }\left( \mathbf{R}^{d}\right) .
\end{eqnarray*}

\begin{remark}
\label{re2}Let $\nu \in \mathfrak{A}_{sym}^{\alpha }$, and $f\in
C_{b}^{\infty }\left( \mathbf{R}^{d}\right) $ be such that for any
multiindex $\gamma $, $D^{\gamma }f\in L^{1}\left( \mathbf{R}^{d}\right)
\cap L^{2}\left( \mathbf{R}^{d}\right) $. Then%
\begin{eqnarray*}
\mathcal{F}\left[ \left( aI-L^{\nu }\right) ^{-s}f\right]  &=&\left( a-\psi
^{\nu }\right) ^{-s}\hat{f},a>0,s>0, \\
\mathcal{F}\left[ \left( aI-L^{\nu }\right) ^{s}f\right]  &=&\left( a-\psi
^{\nu }\right) ^{s}\hat{f},a\geq 0,s>0.
\end{eqnarray*}%
The same holds with $\nu \in \mathfrak{A}^{\alpha }$ if $s\in \mathbf{N}$.
\end{remark}

\begin{lemma}
\label{le3}Assume $a>0$. Then

(i) for any $\kappa ,s\geq 0,$ we have $L^{\nu ;\kappa }L^{\nu ;s}=L^{\nu
;\kappa +s};$ for any $\kappa ,s\in \mathbf{R,}$%
\begin{eqnarray*}
\left( aI-L^{\nu }\right) ^{\kappa }\left( aI-L^{\nu }\right) ^{s} &=&\left(
aI-L^{\nu }\right) ^{\kappa +s}, \\
\left( aI-L^{\nu }\right) ^{-\kappa }\left( aI-L^{\nu }\right) ^{-s}
&=&\left( aI-L^{\nu }\right) ^{-(\kappa +s)},
\end{eqnarray*}%
for $\nu \in \mathfrak{A}_{sym}^{\alpha }$. 

The same holds with $\nu \in \mathfrak{A}^{\alpha }$ if $\kappa ,s\in 
\mathbf{N}$.

(ii) for any $\kappa >0$, the mapping $\left( aI-L^{\nu }\right) ^{\kappa
}:C_{b}^{\infty }\left( \mathbf{R}^{d}\right) \rightarrow C_{b}^{\infty
}\left( \mathbf{R}^{d}\right) $ is bijective whose inverse is $\left(
aI-L^{\nu }\right) ^{-\kappa }:$%
\begin{equation*}
\left( aI-L^{\nu }\right) ^{\kappa }\left( aI-L^{\nu }\right) ^{-\kappa
}f\left( x\right) =\left( aI-L^{\nu }\right) ^{-\kappa }\left( aI-L^{\nu
}\right) ^{\kappa }f\left( x\right) =f\left( x\right) ,x\in \mathbf{R}^{d}.
\end{equation*}%
for any $f\in C_{b}^{\infty }\left( \mathbf{R}^{d}\right) .$ 

The same holds with $\nu \in \mathfrak{A}^{\alpha }$ if $\kappa \in \mathbf{N%
}$.
\end{lemma}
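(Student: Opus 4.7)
The plan is to prove (i) on the Schwartz class $\mathcal{S}(\mathbf{R}^d)$ by reading off the semigroup law directly from the Fourier multipliers, then extend to $C_b^\infty(\mathbf{R}^d)$ by the approximation machinery of Remark \ref{re1} and Lemma \ref{le1}, and finally derive (ii) by piecing together Lemma \ref{le2} (which already covers $\kappa \in (0,1]$) with an induction on integer powers.

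For (i), I would first exploit the fact that for $\nu \in \mathfrak{A}_{sym}^\alpha$ the symbol $\psi^\nu$ is real with $\psi^\nu \leq 0$, so $a - \psi^\nu(\xi) \geq a > 0$ is a smooth positive function of $\xi$ and the pointwise identity
\[
(a - \psi^\nu)^\kappa (a - \psi^\nu)^s = (a - \psi^\nu)^{\kappa+s}
\]
holds for all real exponents, with the analogous identity for the minus exponents. Combined with the Fourier-multiplier representations given in Remarks \ref{re0}(b) and \ref{re2}, this transports immediately to $(aI - L^\nu)^\kappa (aI - L^\nu)^s f = (aI - L^\nu)^{\kappa+s} f$ for $f \in \mathcal{S}(\mathbf{R}^d)$, and similarly for $L^{\nu;\kappa} L^{\nu;s} = L^{\nu;\kappa+s}$ when $a = 0$. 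To pass from $\mathcal{S}$ to general $f \in C_b^\infty(\mathbf{R}^d)$, I would invoke Remark \ref{re1} to produce $f_n \in C_0^\infty(\mathbf{R}^d) \subset \mathcal{S}(\mathbf{R}^d)$ with $D^\gamma f_n \to D^\gamma f$ uniformly on compacts and uniformly bounded in $n$, apply Lemma \ref{le1} once to transfer this convergence to $g_n = (aI - L^\nu)^s f_n \to (aI - L^\nu)^s f$ (with the requisite uniform bounds on all derivatives), and apply Lemma \ref{le1} a second time to conclude $(aI - L^\nu)^\kappa g_n \to (aI - L^\nu)^\kappa (aI - L^\nu)^s f$ pointwise. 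Since the semigroup identity already holds along the sequence, it passes to the limit. The case $\nu \in \mathfrak{A}^\alpha$ with integer exponents is immediate from the iterated-composition definition of $(aI - L^\nu)^k$.

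For (ii), Lemma \ref{le2} covers $\kappa \in (0,1]$. For an integer $\kappa = k$, the standard resolvent identity together with Lemma \ref{le1} (used to differentiate $\mathbf{E} f(x+Z_t^\nu)$ in $t$ via the Kolmogorov equation) yields $(aI - L^\nu)(aI - L^\nu)^{-1} f = (aI - L^\nu)^{-1}(aI - L^\nu) f = f$ on $C_b^\infty(\mathbf{R}^d)$, and induction promotes this to arbitrary $k$. For a non-integer $\kappa = [\kappa] + s$ with $s \in (0,1)$, I would decompose $(aI - L^\nu)^\kappa = (aI - L^\nu)^{[\kappa]} (aI - L^\nu)^s$ using part (i), invert each factor separately by the preceding two cases, and reassemble with (i) once more to recognize the inverse as $(aI - L^\nu)^{-\kappa}$. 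The only substantive obstacle is justifying that the Fourier-multiplier identity transfers from $\mathcal{S}(\mathbf{R}^d)$ to $C_b^\infty(\mathbf{R}^d)$ when the intermediate object $(aI - L^\nu)^s f$ is generally not Schwartz, and this is exactly what the two-step approximation via Lemma \ref{le1} is designed to absorb.
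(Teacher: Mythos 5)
Your proposal is correct and follows essentially the same route as the paper, whose proof is a one-line citation of exactly the ingredients you deploy: the Fourier-multiplier identities of Remarks \ref{re0} and \ref{re2} on nice functions, the cut-off approximation of Remark \ref{re1}, and the two-fold passage to the limit via Lemma \ref{le1}. Your write-up simply makes explicit the details the paper leaves implicit (including the use of Lemma \ref{le2} as the base case for (ii)), so no further comparison is needed.
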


\begin{proof}
The statement is an easy consequence of Lemma \ref{le1} and Remarks \ref{re0}%
, \ref{re1}, and \ref{re2}.
\end{proof}

\begin{lemma}
\label{le6}Let $\nu \in \mathfrak{A}_{sym}^{\alpha }$ satisfy \textbf{A}.

(i) Let $a\geq 0,\kappa >0,m=\left[ \kappa \right] +1$. For any $f\in
C_{b}^{\infty }\left( \mathbf{R}^{d}\right) ,$%
\begin{equation*}
\sup_{R\in (0,1],x}\left\vert \left( a-L^{\tilde{\nu}_{R}}\right) ^{\kappa
}D^{\gamma }f\left( x\right) \right\vert \leq C\left( 1+a\right) ^{\kappa
}\max_{\left\vert \mu \right\vert \leq \left\vert \gamma \right\vert
+2m}\left\vert D^{\mu }f\right\vert _{0}<\infty .
\end{equation*}%
If, in addition, for any multiindex $\gamma $, $\int \left\vert D^{\gamma
}f\left( x\right) \right\vert dx<\infty $, then%
\begin{equation*}
\sup_{R\in (0,1],x}\int \left\vert \left( a-L^{\tilde{\nu}_{R}}\right)
^{\kappa }D^{\gamma }f\left( x\right) \right\vert dx\leq C\left( 1+a\right)
^{\kappa }\max_{\left\vert \mu \right\vert \leq \left\vert \gamma
\right\vert +2m}\int \left\vert D^{\mu }f\left( x\right) \right\vert dx.
\end{equation*}%
The same holds with $\nu \in \mathfrak{A}^{\alpha }$ satisfying \textbf{A}
if $\kappa \in \mathbf{N}$.

(ii) Let $a>0,\kappa >0$. For any $f\in C_{b}^{\infty }\left( \mathbf{R}%
^{d}\right) ,$%
\begin{equation*}
\sup_{R\in (0,1],x}\left\vert \left( a-L^{\tilde{\nu}_{R}}\right) ^{-\kappa
}D^{\gamma }f\left( x\right) \right\vert \leq Ca^{-\kappa }\max_{\left\vert
\mu \right\vert \leq \left\vert \gamma \right\vert }\left\vert D^{\mu
}f\right\vert _{0}<\infty .
\end{equation*}%
If in addition, for any multiindex $\gamma $, $\int \left\vert D^{\gamma
}f\left( x\right) \right\vert dx<\infty $, then%
\begin{equation*}
\sup_{R\in (0,1]}\int \left\vert \left( a-L^{\tilde{\nu}_{R}}\right)
^{-\kappa }D^{\gamma }f\left( x\right) \right\vert dx\leq Ca^{-\kappa
}\max_{\left\vert \mu \right\vert \leq \left\vert \gamma \right\vert }\int
\left\vert D^{\mu }f\left( x\right) \right\vert dx.
\end{equation*}%
The same holds with $\nu \in \mathfrak{A}^{\alpha }$ satisfying \textbf{A}
if $\kappa \in \mathbf{N}$.
\end{lemma}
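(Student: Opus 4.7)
My plan is to build up to arbitrary $\kappa>0$ via the base cases $\kappa=1$ and $\kappa\in(0,1)$, then use composition (Lemma \ref{le3}) to reach general $\kappa$. Throughout, uniformity in $R\in(0,1]$ rests on uniform moment bounds on the rescaled measures $\tilde{\nu}_R$, namely
$\sup_{R\in(0,1]}\int(|y|^2\wedge 1)\tilde{\nu}_R(dy)<\infty$, and, when $\alpha\in(1,2)$, $\sup_R\int_{|y|>1}|y|\tilde{\nu}_R(dy)<\infty$. These are consequences of \textbf{A} and the O-RV property and are recorded once and for all in the Appendix.

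For (i), I first handle $\kappa=1$: split the integral defining $L^{\tilde{\nu}_R}g$ at $|y|=1$, Taylor-expand to the order dictated by $\alpha$ on $|y|\leq 1$, and combine with the tail estimate on $|y|>1$ to get $|L^{\tilde{\nu}_R}g|_0\leq C\sup_{|\mu|\leq 2}|D^\mu g|_0$, uniformly in $R$. Hence $|(aI-L^{\tilde{\nu}_R})g|_0\leq C(1+a)\sup_{|\mu|\leq 2}|D^\mu g|_0$. For $\kappa\in(0,1)$ I rewrite (\ref{ff01}) as
\[
c_\kappa\mathbf{E}\int_0^\infty t^{-\kappa}(e^{-at}-1)f(x+Z_t^{\tilde{\nu}_R})\tfrac{dt}{t}+c_\kappa\mathbf{E}\int_0^\infty t^{-\kappa}[f(x+Z_t^{\tilde{\nu}_R})-f(x)]\tfrac{dt}{t}.
\]
The first term, after the substitution $s=at$, is bounded by $Ca^\kappa|f|_0$. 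The second (which equals $-L^{\tilde{\nu}_R;\kappa}f(x)$) I split at $t=1$: for $t>1$ use $|f(x+Z_t)-f(x)|\leq 2|f|_0$ against $\int_1^\infty t^{-\kappa-1}dt<\infty$; for $t<1$ use $|\mathbf{E}[f(x+Z_t)-f(x)]|\leq t\,|L^{\tilde{\nu}_R}f|_0\leq Ct\sup_{|\mu|\leq 2}|D^\mu f|_0$ (from the $\kappa=1$ case) against $\int_0^1 t^{-\kappa}dt<\infty$. Adding the two pieces gives $|(aI-L^{\tilde{\nu}_R})^\kappa f|_0\leq C(a^\kappa+1)\sup_{|\mu|\leq 2}|D^\mu f|_0\leq C'(1+a)^\kappa\sup_{|\mu|\leq 2}|D^\mu f|_0$.

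For general $\kappa=[\kappa]+s$ with $s\in[0,1)$, I apply Lemma \ref{le3} to write $(aI-L^{\tilde{\nu}_R})^\kappa=(aI-L^{\tilde{\nu}_R})^s(aI-L^{\tilde{\nu}_R})^{[\kappa]}$, iterate the integer bound $[\kappa]$ times, then apply the fractional bound once; the commutation of $D^\gamma$ with these operators (Remark \ref{re0}, Lemma \ref{le1}) lets me replace $f$ by $D^\gamma f$ throughout, and the indices add to $|\gamma|+2([\kappa]+1)=|\gamma|+2m$. The $L^1$ version is obtained by the same argument, pulling $\int dx$ through the operator integrals by Minkowski and using translation invariance of Lebesgue measure (so $\int|f(x+Z_t)|dx=\int|f|dx$). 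For (ii), the representation $(aI-L^{\tilde{\nu}_R})^{-\kappa}f(x)=c_\kappa'\int_0^\infty t^\kappa e^{-at}\mathbf{E}f(x+Z_t^{\tilde{\nu}_R})\tfrac{dt}{t}$ has a nonnegative kernel of total mass $a^{-\kappa}$, so $|(aI-L^{\tilde{\nu}_R})^{-\kappa}f|_0\leq a^{-\kappa}|f|_0$ uniformly in $R$; commutation with $D^\gamma$ and Fubini-plus-translation-invariance deliver both the $L^\infty$ and $L^1$ bounds with no further effort. The main obstacle is the uniformity in $R$; once the $\tilde{\nu}_R$-moment bounds are in hand, every step is an elementary kernel estimate.
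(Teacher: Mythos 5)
Your proof is correct and follows essentially the same route as the paper: the same subordination representations for $(aI-L^{\tilde{\nu}_{R}})^{\pm\kappa}$, the split of the $t$-integral at $t=1$ with Dynkin's formula on the small-$t$ piece, the uniform-in-$R$ moment bounds on $\tilde{\nu}_{R}$ from the Appendix (Lemma \ref{al2}) as the source of uniformity, iteration via Lemma \ref{le3} to reach general $\kappa$, and the total-mass-$a^{-\kappa}$ kernel observation for part (ii). The only difference is cosmetic — you peel off the factor $(e^{-at}-1)f$ before splitting, whereas the paper keeps $e^{-at}$ inside and differentiates $e^{-as}f(x+Z_{s})$ as in (\ref{ff2}) — and your write-up actually supplies the details the paper leaves as "easily follow."
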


\begin{proof}
(i) Let $\kappa \in (0,1]$. Then%
\begin{eqnarray*}
&&\left( a-L^{\tilde{\nu}_{R}}\right) ^{\kappa }f\left( x\right) \\
&=&c_{\kappa }\int_{0}^{\infty }t^{-\kappa }\mathbf{E}\left[ e^{-at}f\left(
x+Z_{t}^{\tilde{\nu}_{R}}\right) -f\left( x\right) \right] \frac{dt}{t}%
=c_{\kappa }\int_{1}^{\infty }... \\
&&+c_{\kappa }\int_{0}^{1}t^{-\kappa }\int_{0}^{t}e^{-as}\mathbf{E}\left[
(L^{\tilde{\nu}_{R}}-a)f\left( x+Z_{s}^{\tilde{\nu}_{R}}\right) \right] ds%
\frac{dt}{t},%
\begin{array}{c}
x\in \mathbf{R}^{d}.%
\end{array}%
\end{eqnarray*}

By Lemma \ref{al2}, we have%
\begin{eqnarray*}
\sup_{R\in (0,1]}\int \left( \left\vert y\right\vert \wedge 1\right) \tilde{%
\nu}_{R}\left( dy\right)  &<&\infty \text{ if }\alpha \in \left( 0,1\right) ,
\\
\sup_{R\in (0,1]}\int \left( \left\vert y\right\vert ^{2}\wedge 1\right) 
\tilde{\nu}_{R}\left( dy\right)  &<&\infty \text{ if }\alpha =1, \\
\sup_{R\in (0,1]}\int \left( \left\vert y\right\vert ^{2}\wedge \left\vert
y\right\vert \right) \tilde{\nu}_{R}\left( dy\right)  &<&\infty \text{ if }%
\alpha \in (1,2),
\end{eqnarray*}%
and both inequalities easily follow. Applying them repeatedly we obtain the
claim for an arbitrary $\kappa >0.$

(ii) Indeed, for any $\kappa >0,a>0,$ and any multiindex $\gamma ,$%
\begin{equation*}
D^{\gamma }\left( a-L^{\tilde{\nu}_{R}}\right) ^{-\kappa }f\left( x\right)
=c_{\kappa }\int_{0}^{\infty }e^{-at}t^{\kappa }\mathbf{E}D^{\gamma }f\left(
x+Z_{t}^{\tilde{\nu}_{R}}\right) \frac{dt}{t},x\in \mathbf{R}^{d},
\end{equation*}%
and the claim obviously follows.
\end{proof}

\begin{lemma}
\label{le66}Let $\nu \in \mathfrak{A}^{\alpha }$ satisfy \textbf{A }and 
\textbf{B}. Let $g\in \mathcal{S}\left( \mathbf{R}^{d}\right) $ be such that 
$\hat{g}\in C_{0}^{\infty }\left( \mathbf{R}^{d}\right) ,0\notin $supp$%
\left( \hat{g}\right) $. Then there are constants $C,c$ so that%
\begin{equation*}
\sup_{R\in (0,1]}\int \left\vert \mathbf{E}g\left( x+Z_{t}^{\tilde{\nu}%
_{R}}\right) \right\vert dx\leq Ce^{-ct},t>0.
\end{equation*}
\end{lemma}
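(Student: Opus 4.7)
The plan is to pass to Fourier space and exploit that $\hat g$ has compact support bounded away from $0$. By the L\'evy--Khintchine formula,
\[
\mathcal{F}_x\bigl[\mathbf{E}\,g(\cdot+Z_t^{\tilde{\nu}_R})\bigr](\xi)=\hat g(\xi)\,e^{t\psi^{\tilde{\nu}_R}(\xi)},
\]
where $\psi^{\tilde{\nu}_R}$ is the L\'evy symbol of $\tilde{\nu}_R$. Fix $0<r_0\le R_0<\infty$ with $K:=\operatorname{supp}\hat g\subset\{r_0\le|\xi|\le R_0\}$. I would then convert the desired $L^1_x$ bound into a Fourier-side estimate by using the elementary inequality
\[
\int_{\mathbf{R}^d}|u(x)|\,dx\le C\!\!\sum_{|\gamma|\le N_0}\|D^\gamma\hat u\|_{L^1(\mathbf{R}^d)},\qquad N_0>d,
\]
proved by splitting $\{|x|\le 1\}$ vs.\ $\{|x|>1\}$ and invoking $\sup_x|x|^{N_0}|u(x)|\le C\|D^{N_0}\hat u\|_{L^1}$; this is applied to $\hat u(\xi)=\hat g(\xi)e^{t\psi^{\tilde{\nu}_R}(\xi)}$, which is supported in $K$.

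The analytic core is a uniform-in-$R$ strict negativity of $\operatorname{Re}\psi^{\tilde{\nu}_R}$ on $K$. Setting $a:=\min\{1,1/(4R_0)\}$, one has $|2\pi y\cdot\xi|\le\pi/2$ for $|y|\le a$ and $\xi\in K$, so $1-\cos\theta\ge c_1\theta^2$ on that range gives
\[
-\operatorname{Re}\psi^{\tilde{\nu}_R}(\xi)\ge c_1(2\pi r_0)^2\int_{|y|\le a}|\hat\xi\cdot y|^2\,\tilde{\nu}_R(dy),\qquad \hat\xi:=\xi/|\xi|.
\]
A direct change of variables in the definition (\ref{meas}) gives the scaling identity
\[
\int_{|y|\le a}|\hat\xi\cdot y|^2\,\tilde{\nu}_R(dy)=a^2\,\frac{w(R)}{w(aR)}\int_{|y|\le 1}|\hat\xi\cdot y|^2\,\tilde{\nu}_{aR}(dy).
\]
Since $w=1/\delta_\nu$ is nondecreasing and $a\le 1$ forces $aR\le R$, the ratio $w(R)/w(aR)\ge 1$. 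Applying assumption \textbf{B} at the admissible scale $aR\in(0,1]$ then yields a uniform lower bound $-\operatorname{Re}\psi^{\tilde{\nu}_R}(\xi)\ge c>0$ for every $\xi\in K$ and $R\in(0,1]$, and in particular $|e^{t\psi^{\tilde{\nu}_R}(\xi)}|\le e^{-ct}$ on $K$.

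The remaining ingredient is uniform-in-$R$ control of $\xi$-derivatives of $\psi^{\tilde{\nu}_R}$ on $K$. Differentiating under the integral sign and using Lemma~\ref{al2}, which bounds $\int(|y|^{2}\wedge|y|^{\eta})\,\tilde{\nu}_R(dy)$ uniformly in $R\in(0,1]$ with $\eta\in\{0,1,2\}$ matched to $\alpha$, gives $|D^\kappa\psi^{\tilde{\nu}_R}(\xi)|\le C_\kappa$ on $K$, uniformly in $R$. Fa\`a di Bruno combined with the exponential decay then produces
\[
\bigl|D^\gamma\bigl(\hat g(\xi)e^{t\psi^{\tilde{\nu}_R}(\xi)}\bigr)\bigr|\le C_\gamma(1+t)^{|\gamma|}e^{-ct}\quad\text{on }K,
\]
and $0$ off $K$. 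Plugging into the Fourier-side bound, absorbing the polynomial in $t$ into a slightly smaller exponential rate, and taking the supremum over $R\in(0,1]$ gives the claimed estimate $\int|\mathbf{E}g(x+Z_t^{\tilde{\nu}_R})|\,dx\le Ce^{-c't}$.

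The main obstacle is the uniform lower bound on $-\operatorname{Re}\psi^{\tilde{\nu}_R}$ in the second paragraph: assumption \textbf{B} is anchored at the unit scale $|y|\le 1$, whereas $\xi$ ranges over the compact shell $K$ so the natural second-moment scale for the cosine expansion is $|y|\lesssim 1/R_0$. Reconciling the two scales uniformly in $R\in(0,1]$ is exactly what the $O$-regularly varying scaling of the family $\{\tilde{\nu}_R\}$ and the monotonicity of $w$ are designed for; everything else is Fourier-analytic bookkeeping and direct application of Lemma~\ref{al2}.
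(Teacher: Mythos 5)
Your overall strategy (pass to the Fourier side, obtain uniform strict negativity of $\operatorname{Re}\psi ^{\tilde{\nu}_{R}}$ on $\mathrm{supp}\,\hat{g}$, control enough $\xi $-derivatives of $\hat{g}\,e^{t\psi ^{\tilde{\nu}_{R}}}$ to convert back to an $L^{1}_{x}$ bound) is the same as the paper's, and your second paragraph is correct --- it even supplies the scaling argument reconciling assumption \textbf{B} (anchored at $\left\vert y\right\vert \leq 1$) with the truncation radius $a$, a point the paper passes over tersely. The genuine gap is in your third paragraph: the claimed uniform bound $\left\vert D^{\kappa }\psi ^{\tilde{\nu}_{R}}\left( \xi \right) \right\vert \leq C_{\kappa }$ is not available for the \emph{full} symbol. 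For $\left\vert \kappa \right\vert \geq 2$ one has $D^{\kappa }\psi ^{\tilde{\nu}_{R}}\left( \xi \right) =\int \left( i2\pi y\right) ^{\kappa }e^{i2\pi \xi \cdot y}\tilde{\nu}_{R}\left( dy\right) $, and the only bound in sight is $\left( 2\pi \right) ^{\left\vert \kappa \right\vert }\int \left\vert y\right\vert ^{\left\vert \kappa \right\vert }\tilde{\nu}_{R}\left( dy\right) $, whose large-jump part satisfies
\begin{equation*}
\int_{\left\vert y\right\vert >1}\left\vert y\right\vert ^{2}\tilde{\nu}_{R}\left( dy\right) =w\left( R\right) R^{-2}\int_{\left\vert z\right\vert >R}\left\vert z\right\vert ^{2}\nu \left( dz\right) \geq w\left( R\right) R^{-2}\int_{R<\left\vert z\right\vert \leq 1}\left\vert z\right\vert ^{2}\nu \left( dz\right) \rightarrow \infty
\end{equation*}
as $R\rightarrow 0$, since $q_{1}<2$ forces $R^{-2}w\left( R\right) \rightarrow \infty $ by Lemma \ref{al1} while $\int_{0<\left\vert z\right\vert \leq 1}\left\vert z\right\vert ^{2}\nu \left( dz\right) $ is a fixed positive constant. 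Worse, for $\alpha \in (0,1]$ even the first derivative requires $\int_{\left\vert y\right\vert >1}\left\vert y\right\vert \nu \left( dy\right) <\infty $, which is not assumed, so $\psi ^{\tilde{\nu}_{R}}$ need not be differentiable at all. Lemma \ref{al2} only controls moments with tail weight $\left\vert y\right\vert ^{0}$ or $\left\vert y\right\vert ^{1}$, never $\left\vert y\right\vert ^{\left\vert \kappa \right\vert }$ with $\left\vert \kappa \right\vert \geq 2$, and your Fourier--Sobolev inequality needs $N_{0}>d\geq 1$ derivatives, so the argument breaks for every $d$ and every $\alpha $.

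The missing idea is the truncation the paper performs: write $\tilde{\nu}_{R}=\tilde{\nu}_{R,\varepsilon }+\left( \tilde{\nu}_{R}-\tilde{\nu}_{R,\varepsilon }\right) $ with $\tilde{\nu}_{R,\varepsilon }=1_{\left\{ \left\vert y\right\vert \leq \varepsilon \right\} }\tilde{\nu}_{R}$. The remainder is a finite measure, so $e^{t\psi ^{\tilde{\nu}_{R}}}=e^{t\psi ^{\tilde{\nu}_{R,\varepsilon }}}e^{t\psi }$ where $e^{t\psi }$ is the characteristic function of a probability distribution $P_{R,t}$; hence $\mathbf{E}g\left( \cdot +Z_{t}^{\tilde{\nu}_{R}}\right) $ is the convolution of $H\left( t,\cdot \right) =\mathbf{E}g\left( \cdot +Z_{t}^{\tilde{\nu}_{R,\varepsilon }}\right) $ with $P_{R,t}$, and convolution with a probability measure does not increase the $L^{1}$ norm, so the large jumps are disposed of with no derivative estimates at all. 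Your Fourier bookkeeping (including the real-part lower bound, which survives truncation) then applies verbatim to $\psi ^{\tilde{\nu}_{R,\varepsilon }}$, all of whose $\xi $-derivatives are uniformly bounded because $\int_{\left\vert y\right\vert \leq \varepsilon }\left\vert y\right\vert ^{k}\tilde{\nu}_{R}\left( dy\right) $ is uniformly bounded for every $k\geq 1$. With that insertion your proof closes and coincides in substance with the paper's, which runs the final conversion through weighted $L^{2}$ and Cauchy--Schwarz rather than your $L^{1}$ Fourier--Sobolev inequality.
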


\begin{proof}
Let $F(t,x)=\mathbf{E}g\left( x+Z_{t}^{\tilde{\nu}_{R}}\right) ,x\in \mathbf{%
R}^{d},t>0$. We choose $\varepsilon >0$ so that supp$~\left( \hat{g}\right)
\subseteq \left\{ \xi :\left\vert \xi \right\vert \leq \varepsilon
^{-1}\right\} $. Let $\tilde{\nu}_{R,\varepsilon }\left( dy\right) =\chi
_{\left\{ \left\vert y\right\vert \leq \varepsilon \right\} }\tilde{\nu}%
_{R}\left( dy\right) ,R\in \left[ 0,1\right] $. Then for $\xi \in $supp$%
~\left( \hat{g}\right) $ and $\left\vert y\right\vert \leq \varepsilon ,$%
\begin{equation*}
1-\cos \left( \xi \cdot y\right) \geq \frac{1}{\pi }\left\vert \xi \cdot
y\right\vert ^{2}=\frac{\left\vert \xi \right\vert ^{2}}{\pi }\left\vert 
\hat{\xi}\cdot y\right\vert ^{2}
\end{equation*}%
with $\hat{\xi}=\xi /\left\vert \xi \right\vert .$ Therefore there is $%
c_{0}>0$ so that for any $\xi \in $supp$~\left( \hat{g}\right) $ and $R\in
(0,1],$ 
\begin{eqnarray}
-\func{Re}\psi ^{\tilde{\nu}_{R,\varepsilon }}\left( \xi \right) 
&=&\int_{\left\vert y\right\vert \leq \varepsilon }\left[ 1-\cos \left( \xi
\cdot y\right) \right] \tilde{\nu}_{R}\left( dy\right)   \label{f44} \\
&\geq &\frac{\left\vert \xi \right\vert ^{2}}{\pi }\int_{\left\vert
y\right\vert \leq \varepsilon }\left\vert \hat{\xi}\cdot y\right\vert ^{2}%
\tilde{\nu}_{R}\left( dy\right) \geq c_{0}\left\vert \xi \right\vert ^{2}. 
\notag
\end{eqnarray}%
Then%
\begin{equation*}
\hat{F}\left( t,\xi \right) =\exp \left\{ \psi ^{\tilde{\nu}_{R}}\left( \xi
\right) t\right\} \hat{g}\left( \xi \right) =\exp \left\{ \psi ^{\tilde{\nu}%
_{R,\varepsilon }}\left( \xi \right) t\right\} \exp \left\{ \psi \left( \xi
\right) t\right\} \hat{g}\left( \xi \right) ,\xi \in \mathbf{R}^{d},
\end{equation*}%
where $\exp \left\{ \psi \left( \xi \right) t\right\} $ is a characteristic
function of a probability distribution $P_{R,t}\left( dy\right) $ on $%
\mathbf{R}^{d}$. Hence%
\begin{equation*}
F\left( t,x\right) =\int H\left( t,x-y\right) P_{R,t}\left( dy\right) ,x\in 
\mathbf{R}^{d},
\end{equation*}%
with%
\begin{equation*}
H\left( t,x\right) =\mathcal{F}^{-1}\left[ \exp \left\{ \psi ^{\tilde{\nu}%
_{R,\varepsilon }}t\right\} \hat{g}\right] =\mathbf{E}g\left( x+Z_{t}^{%
\tilde{\nu}_{R,\varepsilon }}\right) ,x\in \mathbf{R}^{d}.
\end{equation*}%
Since%
\begin{equation*}
\int \left\vert F\left( t,x\right) \right\vert dx\leq \int \left\vert
H\left( t,x\right) \right\vert dx,
\end{equation*}%
it is enough to prove that%
\begin{equation}
\int \left\vert H\left( t,x\right) \right\vert dx\leq Ce^{-ct},t>0.
\label{f11}
\end{equation}%
Now, (\ref{f44}) implies that for any multiindex $\left\vert \gamma
\right\vert \leq n=[\frac{d}{2}]+3,$%
\begin{eqnarray*}
\int \left\vert x^{\gamma }H\left( t,x\right) \right\vert ^{2}dx &\leq
&C\int \left\vert D^{\gamma }\left[ \hat{g}\left( \xi \right) \exp \left\{
\psi ^{\tilde{\nu}_{R,\varepsilon }}\left( \xi \right) t\right\} \right]
\right\vert ^{2}d\xi  \\
&\leq &C_{1}e^{-c_{2}t},t>0.
\end{eqnarray*}%
Hence, denoting $d_{0}=\left[ \frac{d}{2}\right] +1,$%
\begin{eqnarray*}
\int \left\vert H\left( t,x\right) \right\vert dx &=&\int \left(
1+\left\vert x\right\vert ^{2}\right) ^{-d_{0}}\left\vert H\left( t,x\right)
\right\vert \left( 1+\left\vert x\right\vert ^{2}\right) ^{d_{0}}dx \\
&\leq &C\int \left\vert H\left( t,x\right) \right\vert ^{2}\left(
1+\left\vert x\right\vert ^{2}\right) ^{2d_{0}}dx\leq C_{1}e^{-c_{2}t},t>0.
\end{eqnarray*}%
Thus (\ref{f11}) follows, and 
\begin{equation}
\int \left\vert \mathbf{E}g\left( x+Z_{t}^{\tilde{\nu}_{R}}\right)
\right\vert dx\leq C_{1}e^{-c_{2}t},t>0.  \label{f8}
\end{equation}
\end{proof}

\begin{corollary}
\label{c1}Let $\nu \in \mathfrak{A}_{sym}^{\alpha }$ satisfy \textbf{A }and 
\textbf{B}. Let $g\in \mathcal{S}\left( \mathbf{R}^{d}\right) $ be such that 
$\hat{g}\in C_{0}^{\infty }\left( \mathbf{R}^{d}\right) ,0\notin $supp$%
\left( \hat{g}\right) $. Then for $a\geq 0,\kappa >0,R\in (0,1],$ 
\begin{eqnarray*}
\left( a-L^{\tilde{\nu}_{R}}\right) ^{-\kappa }g\left( x\right)  &=&-%
\mathcal{F}^{-1}\left[ \left( a-\psi ^{\tilde{\nu}_{R}}\right) ^{-\kappa }%
\hat{g}\right] \left( x\right)  \\
&=&c_{-\kappa }\int_{0}^{\infty }e^{-at}t^{\kappa }\mathbf{E}g\left(
x+Z_{t}^{\tilde{\nu}_{R}}\right) \frac{dt}{t},x\in \mathbf{R}^{d},
\end{eqnarray*}%
is $C_{b}^{\infty }$ -function, and for every multiindex $\gamma ,$ we have $%
D^{\gamma }\left( a-L^{\tilde{\nu}_{R}}\right) ^{-\kappa }g=\left( a-L^{%
\tilde{\nu}_{R}}\right) ^{-\kappa }D^{\gamma }g,R>0,$ and%
\begin{equation*}
\sup_{R\in (0,1],a\geq 0}\int \left\vert D^{\gamma }\left( a-L^{\tilde{\nu}%
_{R}}\right) ^{-\kappa }g\left( x\right) \right\vert ^{p}dx<\infty ,p\geq 1.
\end{equation*}

The same holds with $\nu \in \mathfrak{A}^{\alpha }$ satisfying \textbf{A }%
and \textbf{B} if $\kappa \in \mathbf{N}$.
\end{corollary}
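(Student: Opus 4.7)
The plan is to reduce the claim to Lemma \ref{le66} together with the Fourier-side identities of Remarks \ref{re0} and \ref{re2}, the only new input being uniformity in $a\ge 0$ (in particular the boundary case $a=0$), which will be supplied by assumption \textbf{B}. First I would observe that for each multiindex $\gamma$, $\widehat{D^\gamma g}(\xi)=(2\pi i\xi)^\gamma\hat g(\xi)$ again lies in $C_0^\infty(\mathbf{R}^d)$ with $0\notin\mathrm{supp}(\widehat{D^\gamma g})$, so Lemma \ref{le66} applies to $D^\gamma g$ and gives
\[
\sup_{R\in(0,1]}\int\bigl|\mathbf{E}(D^\gamma g)(x+Z_t^{\tilde\nu_R})\bigr|\,dx\le C_\gamma e^{-ct},\qquad t>0.
\]
Since $g\mapsto \mathbf{E}g(\cdot+Z_t^{\tilde\nu_R})$ is convolution against a probability measure, it is an $L^\infty$-contraction, so $\|\mathbf{E}(D^\gamma g)(\cdot+Z_t^{\tilde\nu_R})\|_\infty\le |D^\gamma g|_0$. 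Interpolation then yields the exponential $L^p$-decay
\[
\bigl\|\mathbf{E}(D^\gamma g)(\cdot+Z_t^{\tilde\nu_R})\bigr\|_{L^p}\le C_\gamma^{1/p} e^{-ct/p}|D^\gamma g|_0^{1-1/p},\qquad p\in[1,\infty],
\]
uniformly in $R\in(0,1]$.

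Define $G_{R,a}(x):=c_{-\kappa}\int_0^\infty e^{-at}t^\kappa\,\mathbf{E}g(x+Z_t^{\tilde\nu_R})\,\frac{dt}{t}$. By Minkowski's integral inequality and the $L^p$-bound above,
\[
\|D^\gamma G_{R,a}\|_{L^p}\le C\int_0^\infty t^{\kappa-1}e^{-(a+c/p)t}\,dt\cdot|D^\gamma g|_0^{1-1/p}<\infty,
\]
uniformly in $a\ge 0$ and $R\in(0,1]$ (the integral at $0$ converges since $\kappa>0$, and the exponential factor controls the tail even when $a=0$). Absolute convergence also justifies differentiation under the integral, giving $D^\gamma G_{R,a}=c_{-\kappa}\int_0^\infty e^{-at}t^\kappa\mathbf{E}(D^\gamma g)(x+Z_t^{\tilde\nu_R})\,\frac{dt}{t}$, i.e.\ the same construction applied to $D^\gamma g$. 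This already establishes the $C_b^\infty$-regularity, the commutation $D^\gamma(a-L^{\tilde\nu_R})^{-\kappa}g=(a-L^{\tilde\nu_R})^{-\kappa}D^\gamma g$, and the claimed $L^p$-bound.

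It remains to identify $G_{R,a}$ with $-\mathcal{F}^{-1}[(a-\psi^{\tilde\nu_R})^{-\kappa}\hat g]$. By Fubini (permitted by the absolute convergence) and the L\'evy--Khintchine formula $\mathbf{E}e^{i2\pi\xi\cdot Z_t^{\tilde\nu_R}}=e^{\psi^{\tilde\nu_R}(\xi)t}$,
\[
\hat G_{R,a}(\xi)=c_{-\kappa}\hat g(\xi)\int_0^\infty t^{\kappa-1}e^{(\psi^{\tilde\nu_R}(\xi)-a)t}\,dt=c_{-\kappa}\Gamma(\kappa)\bigl(a-\psi^{\tilde\nu_R}(\xi)\bigr)^{-\kappa}\hat g(\xi).
\]
The reciprocal $(a-\psi^{\tilde\nu_R})^{-\kappa}$ is well defined on $\mathrm{supp}(\hat g)$ uniformly in $a\ge 0$ and $R\in(0,1]$: since $\nu\in\mathfrak{A}_{sym}^\alpha$, $\psi^{\tilde\nu_R}$ is real and nonpositive, and assumption \textbf{B} applied to the calculation leading to (\ref{f44}) gives $-\psi^{\tilde\nu_R}(\xi)\ge c_0|\xi|^2$ on $\mathrm{supp}(\hat g)$ uniformly in $R\in(0,1]$ (recall $0\notin\mathrm{supp}(\hat g)$). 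Fourier inversion then yields the stated identity, and for $a>0$ this matches Remark \ref{re2}, so the constant $c_{-\kappa}$ is forced to equal $1/\Gamma(\kappa)$.

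The main delicate point is the uniformity in $a\ge 0$, in particular the fact that the integral representation remains meaningful at $a=0$: this is where \textbf{B} is essential, because without the quadratic lower bound on $-\psi^{\tilde\nu_R}$ supplied by the nondegenerate part $\tilde\nu_{R,\varepsilon}$, the inverse $(-\psi^{\tilde\nu_R})^{-\kappa}$ could blow up and the time integral at $a=0$ would fail to converge. The symmetry $\nu\in\mathfrak{A}_{sym}^\alpha$ is used only to ensure $\psi^{\tilde\nu_R}$ is real so that fractional powers are unambiguously defined; for $\kappa\in\mathbf{N}$ this is unnecessary and one may drop the symmetry assumption, as stated.
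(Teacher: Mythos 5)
Your proof is correct and follows essentially the same route as the paper: both reduce everything to the uniform exponential $L^{1}$-decay supplied by Lemma \ref{le66} and then integrate $t^{\kappa -1}e^{-at}$ against it, with differentiation under the integral giving the commutation with $D^{\gamma }$ and the $C_{b}^{\infty }$ regularity. The only (harmless) difference is that the paper writes $\mathbf{E}g\left( \cdot +Z_{t}^{\tilde{\nu}_{R}}\right) =g\ast H_{R}\left( t,\cdot \right) $ with an auxiliary multiplier $\eta $ and invokes Young's inequality to obtain the $L^{p}$ bound with rate $e^{-ct}$, whereas you apply Lemma \ref{le66} directly to each $D^{\gamma }g$ and interpolate between $L^{1}$ and $L^{\infty }$, getting the slightly weaker but equally sufficient rate $e^{-ct/p}$.
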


\begin{proof}
Take $\eta \in C_{0}^{\infty }\left( \mathbf{R}^{d}\right) $ so that $\eta 
\hat{g}=\hat{g},0\notin $supp$~\left( \eta \right) $, and let $\tilde{\eta}=%
\mathcal{F}^{-1}\eta $. Let%
\begin{equation*}
F_{R}\left( t,x\right) =\mathbf{E}g\left( x+Z_{t}^{\tilde{\nu}_{R}}\right)
,t>0,x\in \mathbf{R}^{d}.
\end{equation*}%
Then%
\begin{equation*}
\hat{F}_{R}\left( t,\xi \right) =\exp \left\{ \psi ^{\tilde{\nu}_{R}}\left(
\xi \right) t\right\} \eta \left( \xi \right) \hat{g}\left( \xi \right) ,\xi
\in \mathbf{R}^{d},
\end{equation*}%
and%
\begin{equation*}
F_{R}\left( t,x\right) =\int H_{R}\left( t,x-y\right) g\left( y\right)
dy=\int g\left( x-y\right) H_{R}\left( t,y\right) dy,x\in \mathbf{R}^{d},
\end{equation*}%
with%
\begin{equation*}
H_{R}\left( t,x\right) =\mathcal{F}^{-1}\left[ \exp \left\{ \psi ^{\tilde{\nu%
}_{R}}t\right\} \eta \right] =\mathbf{E}\tilde{\eta}\left( x+Z_{t}^{\tilde{%
\nu}_{R}}\right) ,t>0,x\in \mathbf{R}^{d}.
\end{equation*}
By Lemma \ref{le66},%
\begin{equation*}
\sup_{R\in (0,1]}\int \left\vert H_{R}\left( t,y\right) \right\vert dy\leq
Ce^{-ct},t>0.
\end{equation*}%
Hence $F_{R}\left( t,\cdot \right) \in C_{b}^{\infty }\left( \mathbf{R}%
^{d}\right) ,t>0,$ and for each multiindex $\gamma $ and $p\geq 1,$%
\begin{equation*}
\sup_{R}\left( \int \left\vert D^{\gamma }F_{R}\left( t,x\right) \right\vert
^{p}dx\right) ^{1/p}\leq C\left( \int \left\vert D^{\gamma }g\left( x\right)
\right\vert ^{p}dx\right) ^{1/p}e^{-ct},t>0.
\end{equation*}
\end{proof}

\begin{corollary}
\label{c2}Let $\nu \in \mathfrak{A}^{\alpha }$ satisfy \textbf{A }and 
\textbf{B}. Let $g\in \mathcal{S}\left( \mathbf{R}^{d}\right) $ be such that 
$\hat{g}\in C_{0}^{\infty }\left( \mathbf{R}^{d}\right) ,0\notin $supp$%
\left( \hat{g}\right) $. Then there are constants $C,c>0$ so that%
\begin{equation*}
\sup_{R\in (0,1]}\int \left\vert \mathbf{E}L^{\tilde{\nu}_{R}}g\left(
x+Z_{t}^{\tilde{\nu}_{R}}\right) \right\vert dx\leq Ce^{-ct},t>0.
\end{equation*}
\end{corollary}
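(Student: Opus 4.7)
The plan is to apply Lemma~\ref{le66} to the $R$-dependent test-function family $g_{R} := L^{\tilde{\nu}_{R}} g$. Since $\hat{g}_{R}(\xi) = \psi^{\tilde{\nu}_{R}}(\xi) \hat{g}(\xi)$ lies in $C_{0}^{\infty}(\mathbf{R}^{d})$ with $\text{supp}(\hat{g}_{R}) \subseteq \text{supp}(\hat{g})$, each $g_{R}$ has Fourier transform in $C_{0}^{\infty}$ and supported away from the origin, so $g_{R}$ qualifies as an input to Lemma~\ref{le66}. The identity
\[
\mathbf{E}\, L^{\tilde{\nu}_{R}} g(x + Z_{t}^{\tilde{\nu}_{R}}) = \mathbf{E}\, g_{R}(x + Z_{t}^{\tilde{\nu}_{R}})
\]
then reduces the claim to a uniform-in-$R$ version of Lemma~\ref{le66}'s conclusion for the family $\{g_{R}\}$.

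A careful reading of the proof of Lemma~\ref{le66} shows that the constants $C, c$ there depend on the test function only through: (i)~a radius $\varepsilon > 0$ with $\text{supp}(\hat{g}) \subseteq \{|\xi| \leq \varepsilon^{-1}\}$; (ii)~the ellipticity constant from Assumption~\textbf{B} producing $-\text{Re}\,\psi^{\tilde{\nu}_{R,\varepsilon}}(\xi) \geq c_{0}|\xi|^{2}$ on $\text{supp}(\hat{g})$; and (iii)~$L^{2}$-norms of $D^{\gamma}[\hat{g}_{R}(\xi)\exp\{\psi^{\tilde{\nu}_{R,\varepsilon}}(\xi)t\}]$ for $|\gamma| \leq n := [d/2]+3$. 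Items (i) and (ii) are inherited unchanged from $g$, so the task reduces to uniform control in $R \in (0,1]$ of $\|D^{\gamma}\hat{g}_{R}\|_{\infty}$ on the compact set $\text{supp}(\hat{g})$.

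By the Leibniz rule, this reduces to uniformly bounding the $\xi$-derivatives $D^{\beta}\psi^{\tilde{\nu}_{R}}(\xi)$ on $\text{supp}(\hat{g})$ for $|\beta| \leq n$. Each such derivative is an integral of $e^{i\xi\cdot y}$ times a polynomial in $y$ against $\tilde{\nu}_{R}$ (with a L\'{e}vy compensator entering only at $|\beta| \leq 1$). I would split into $\{|y| \leq 1\}$, where the integrand is controlled by $|y|^{2}\wedge |y|$ and Lemma~\ref{al2} yields uniform bounds, and $\{|y| > 1\}$, where the tail $\tilde{\nu}_{R}(\{|y|>K\}) = \delta(RK)/\delta(R)$ is uniformly bounded in $R \in (0,1]$ by the O-RV property of $w$. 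Together, these give the uniform-in-$R$ pointwise bounds on $D^{\beta}\psi^{\tilde{\nu}_{R}}$ and hence on $D^{\gamma}\hat{g}_{R}$.

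The principal obstacle is bookkeeping: extracting from Lemma~\ref{le66}'s proof the precise functional data on which the constants depend, and verifying that this data is uniformly bounded for the family $\{\hat{g}_{R}\}_{R \in (0,1]}$. Once the uniform derivative estimates on $\psi^{\tilde{\nu}_{R}}$ are established through Lemma~\ref{al2} and the O-RV tail control, the remaining Sobolev-embedding-plus-Plancherel argument from Lemma~\ref{le66}'s proof applies verbatim and yields the desired exponential decay $C e^{-ct}$.
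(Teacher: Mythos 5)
Your overall instinct --- reduce to Lemma \ref{le66} --- is the right one, but the specific reduction you chose breaks down at its first step. You claim that $\hat{g}_{R}=\psi ^{\tilde{\nu}_{R}}\hat{g}$ lies in $C_{0}^{\infty }$ and then propose to bound $D^{\beta }\psi ^{\tilde{\nu}_{R}}$ on $\mathrm{supp}(\hat{g})$ uniformly in $R$. But for a general $\nu \in \mathfrak{A}^{\alpha }$ the symbol $\psi ^{\tilde{\nu}_{R}}$ is \emph{not} smooth: $D^{\beta }\psi ^{\tilde{\nu}_{R}}(\xi )$ formally involves $\int_{\left\vert y\right\vert >1}\left\vert y\right\vert ^{\left\vert \beta \right\vert }\tilde{\nu}_{R}(dy)$, and the class $\mathfrak{A}^{\alpha }$ only guarantees finite mass of the tail (plus a first moment when $\alpha \in (1,2)$), never second or higher moments over $\{\left\vert y\right\vert >1\}$. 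Your proposed tail control, ``$\tilde{\nu}_{R}(\{\left\vert y\right\vert >K\})=\delta (RK)/\delta (R)$ is uniformly bounded,'' bounds the \emph{measure} of the tail but not the moments $\int_{\left\vert y\right\vert >K}\left\vert y\right\vert ^{\left\vert \beta \right\vert }\tilde{\nu}_{R}(dy)$ that the derivative estimates require; the O-RV hypothesis constrains $\nu $ near the origin and says nothing about large jumps. (A concrete counterexample: $\nu (dy)=\left\vert y\right\vert ^{-d-\alpha }1_{\left\vert y\right\vert \leq 1}dy+\left\vert y\right\vert ^{-d-\gamma }1_{\left\vert y\right\vert >1}dy$ with $\gamma \in (1,2)$.) Note that in the proof of Lemma \ref{le66} the derivative estimates are applied only to $\psi ^{\tilde{\nu}_{R,\varepsilon }}$, the symbol of the measure \emph{truncated} to $\{\left\vert y\right\vert \leq \varepsilon \}$, which does have all moments; your plan needs the untruncated symbol, and that is exactly where it fails.

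The paper avoids this entirely by never differentiating $\psi ^{\tilde{\nu}_{R}}$. It inserts a fixed cutoff $h\in C_{0}^{\infty }$ with $h=1$ on $\mathrm{supp}(\hat{g})$ and $h=0$ near $0$, writes $\hat{G}_{R}(t,\xi )=\left[ \exp \{\psi ^{\tilde{\nu}_{R}}(\xi )t\}h(\xi )\right] \cdot \left[ \psi ^{\tilde{\nu}_{R}}(\xi )\hat{g}(\xi )\right] $, i.e. $G_{R}(t,\cdot )=H_{R}(t,\cdot )\ast B_{R}$ with $B_{R}=L^{\tilde{\nu}_{R}}g$, and then combines Lemma \ref{le66} applied to the \emph{fixed} test function $\mathcal{F}^{-1}h$ (giving $\sup_{R}\int \left\vert H_{R}(t,x)\right\vert dx\leq Ce^{-ct}$) with the uniform bound $\sup_{R}\int \left\vert L^{\tilde{\nu}_{R}}g(x)\right\vert dx<\infty $ from Lemma \ref{le6}(i), which is proved from the integral representation of $L^{\tilde{\nu}_{R}}$ and the moment bounds of Lemma \ref{al2} rather than from Fourier-side smoothness. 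Young's inequality then finishes. If you want to keep your framing, the repair is precisely this factorization: treat $L^{\tilde{\nu}_{R}}g$ only as an $L^{1}$ function (uniformly in $R$) and let the fixed cutoff carry the exponential decay.
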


\begin{proof}
Let $h\in C_{0}^{\infty }\left( \mathbf{R}^{d}\right) ,0\leq h\leq 1,$ and $%
h\left( \xi \right) =1$ if $\xi \in $ supp $\left( g\right) $, $h\left( \xi
\right) =0$ in a neighborhood of zero. Let%
\begin{equation*}
G_{R}\left( t,x\right) =\mathbf{E}L^{\tilde{\nu}_{R}}g\left( x+Z_{t}^{\tilde{%
\nu}_{R}}\right) ,x\in \mathbf{R}^{d}\text{.}
\end{equation*}%
Then%
\begin{eqnarray*}
\hat{G}_{R}\left( t,\xi \right)  &=&\exp \left\{ \psi ^{\tilde{\nu}%
_{R}}\left( \xi \right) t\right\} \psi ^{\tilde{\nu}_{R}}\left( \xi \right) 
\hat{g}\left( \xi \right)  \\
&=&\exp \left\{ \psi ^{\tilde{\nu}_{R}}\left( \xi \right) t\right\} h\left(
\xi \right) \psi ^{\tilde{\nu}_{R}}\left( \xi \right) \hat{g}\left( \xi
\right) ,\xi \in \mathbf{R}^{d}.
\end{eqnarray*}%
Hence%
\begin{equation*}
G_{R}\left( t,x\right) =\int H_{R}\left( t,x-y\right) B_{R}\left( y\right)
dy,x\in \mathbf{R}^{d},
\end{equation*}%
where%
\begin{equation*}
B_{R}\left( x\right) =L^{\tilde{\nu}_{R}}g\left( x\right) ,H_{R}\left(
t,x\right) =\mathbf{E}h\left( x+Z_{t}^{\tilde{\nu}_{R}}\right) x\in \mathbf{R%
}^{d}.
\end{equation*}%
Thus, by Lemma \ref{le66}%
\begin{eqnarray*}
\sup_{R\in (0,1]}\int |G_{R}\left( t,x\right) |dx &\leq &\sup_{R\in
(0,1]}\int \left\vert H_{R}\left( t,x\right) \right\vert dx\sup_{R\in
(0,1]}\int \left\vert B_{R}\left( x\right) \right\vert dx \\
&\leq &Ce^{-ct},t>0.
\end{eqnarray*}
\end{proof}

\begin{lemma}
\label{le7}Let $\nu \in \mathfrak{A}_{sym}^{\alpha }$ satisfy \textbf{A }and 
\textbf{B}. Then

(i) For each $\beta ,\kappa >0,$ there is $C>0$ so that%
\begin{eqnarray*}
\left\vert L^{\nu ;\kappa }u\right\vert _{\beta ,\infty } &\leq &C\left\vert
u\right\vert _{\beta +\kappa ,\infty },u\in C_{b}^{\infty }\left( \mathbf{R}%
^{d}\right) , \\
\left\vert \left( I-L^{\nu }\right) ^{\kappa }u\right\vert _{\beta ,\infty }
&\leq &C\left\vert u\right\vert _{\beta +\kappa ,\infty },u\in C_{b}^{\infty
}\left( \mathbf{R}^{d}\right) ,
\end{eqnarray*}

(ii) For each $0<\kappa <\beta ,$ there is $C>0$ so that%
\begin{eqnarray*}
\left\vert u\right\vert _{\beta ,\infty } &\leq &C\left[ \left\vert L^{\nu
;\kappa }u\right\vert _{\beta -\kappa ,\infty }+\left\vert u\right\vert _{0}%
\right] ,u\in C_{b}^{\infty }\left( \mathbf{R}^{d}\right) , \\
\left\vert u\right\vert _{\beta ,\infty } &\leq &C\left\vert \left( I-L^{\nu
}\right) ^{\kappa }u\right\vert _{\beta -\kappa ,\infty },u\in C_{b}^{\infty
}\left( \mathbf{R}^{d}\right) .
\end{eqnarray*}

The same holds with $\nu \in \mathfrak{A}^{\alpha }$ satisfying \textbf{A }%
and \textbf{B} if $\kappa \in \mathbf{N}$.
\end{lemma}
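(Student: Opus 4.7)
The plan is to reduce each inequality to a dyadic block estimate via the Littlewood--Paley decomposition used to define $\left\vert\cdot\right\vert_{\beta,\infty}$. Since $L^{\nu;\kappa}$ and $(I-L^{\nu})^{\kappa}$ are Fourier multipliers on $C_{b}^{\infty}(\mathbf{R}^{d})$ (Remarks \ref{re0} and \ref{re2}), they commute with the convolutions $u\mapsto u\ast\varphi_{j}$, so it suffices to compare $|L^{\nu;\kappa}u\ast\varphi_{j}|_{0}$ (respectively $|(I-L^{\nu})^{\kappa}u\ast\varphi_{j}|_{0}$) with $|u\ast\varphi_{j}|_{0}$ for each dyadic index $j$. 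The main computational device is the scaling identity $\psi^{\nu}(N^{j}\eta)=w(N^{-j})^{-1}\psi^{\tilde{\nu}_{N^{-j}}}(\eta)$, which follows from (\ref{meas}) and allows transfer of bounds on the annulus $|\xi|\sim N^{j}$ to bounds on a fixed annulus, uniformly in $j$.

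For part (i), using $\phi=\phi\tilde{\phi}$ I write $L^{\nu;\kappa}(u\ast\varphi_{j})=(u\ast\varphi_{j})\ast\Phi_{j,\kappa}$, where $\widehat{\Phi_{j,\kappa}}(\xi)=-(-\psi^{\nu}(\xi))^{\kappa}\tilde{\phi}(N^{-j}\xi)$. The scaling identity yields $\Phi_{j,\kappa}(x)=w(N^{-j})^{-\kappa}N^{jd}(L^{\tilde{\nu}_{N^{-j}};\kappa}\tilde{\varphi})(N^{j}x)$, and Lemma \ref{le6}(i) applied to $\tilde{\varphi}\in\mathcal{S}(\mathbf{R}^{d})$ supplies $\sup_{R\in(0,1]}\|L^{\tilde{\nu}_{R};\kappa}\tilde{\varphi}\|_{L^{1}}<\infty$, hence $\|\Phi_{j,\kappa}\|_{L^{1}}\leq Cw(N^{-j})^{-\kappa}$. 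Young's inequality, multiplication by $w(N^{-j})^{-\beta}$, and $\sup_{j}$ then give $|L^{\nu;\kappa}u|_{\beta,\infty}\leq C|u|_{\beta+\kappa,\infty}$. The $(I-L^{\nu})^{\kappa}$ estimate is identical after replacing the symbol by $(w(N^{-j})-\psi^{\tilde{\nu}_{N^{-j}}}(\eta))^{\kappa}\tilde{\phi}(\eta)$ and invoking Lemma \ref{le6}(i) with $a=w(N^{-j})\leq w(1)$. The low-frequency block $j=0$ is handled similarly via $\tilde{\varphi}_{0}\in\mathcal{S}(\mathbf{R}^{d})$.

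For part (ii), I invert the symbol on each block. By condition \textbf{B} and the calculation in (\ref{f44}), $|\psi^{\tilde{\nu}_{R}}(\eta)|\geq c|\eta|^{2}$ uniformly in $R\in(0,1]$ on a fixed annulus containing the support of $\tilde{\phi}$, so $m_{j}(\eta):=-\tilde{\phi}(\eta)/(-\psi^{\tilde{\nu}_{N^{-j}}}(\eta))^{\kappa}$ is smooth and supported away from the origin. Uniform-in-$R$ moment bounds for $\tilde{\nu}_{R}$ from Lemma \ref{al2} yield uniform $C^{k}$ control of $\psi^{\tilde{\nu}_{R}}$ on that annulus, hence of $m_{j}$ in $j$, so $\sup_{j}\|\check{m}_{j}\|_{L^{1}}<\infty$ by standard multiplier arguments. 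The identity $\widehat{u\ast\varphi_{j}}(\xi)=w(N^{-j})^{\kappa}m_{j}(N^{-j}\xi)\widehat{L^{\nu;\kappa}u\ast\varphi_{j}}(\xi)$ combined with Young then yields $|u\ast\varphi_{j}|_{0}\leq Cw(N^{-j})^{\kappa}|L^{\nu;\kappa}u\ast\varphi_{j}|_{0}$ for $j\geq 1$, while $j=0$ is absorbed via the trivial $|u\ast\varphi_{0}|_{0}\leq C|u|_{0}$, which accounts for the extra $|u|_{0}$ summand. For the $(I-L^{\nu})^{\kappa}$ version, $1-\psi^{\nu}\geq 1$ everywhere by symmetry, so the multiplier argument applies down to $j=0$ with no extra term.

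The principal obstacle is the uniform multiplier bound underlying part (ii): one must assemble condition \textbf{B} (lower bound on $|\psi^{\tilde{\nu}_{R}}|$ on a fixed annulus) and the uniform-in-$R$ derivative bounds from Lemma \ref{al2} on $\psi^{\tilde{\nu}_{R}}$ to extract the $L^{1}$ control of $\check{m}_{j}$ uniformly in $j$. Once these pieces are in place, the bound follows by a Mikhlin-type argument or by direct estimation of $|x|^{n}\check{m}_{j}(x)$ via integration by parts.
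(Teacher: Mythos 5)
Your part (i) is essentially the paper's own argument: rescale the block symbol via $(r-\psi ^{\nu }(\xi ))^{\kappa }\tilde{\phi}(N^{-j}\xi )=w(N^{-j})^{-\kappa }\bigl( rw(N^{-j})-\psi ^{\tilde{\nu}_{N^{-j}}}(N^{-j}\xi )\bigr) ^{\kappa }\tilde{\phi}(N^{-j}\xi )$ and invoke the uniform $L^{1}$ bound of Lemma \ref{le6}(i); that is fine. The genuine gap is in part (ii). You propose to bound $\sup_{j}\Vert \check{m}_{j}\Vert _{L^{1}}$ for $m_{j}=-\tilde{\phi}/(-\psi ^{\tilde{\nu}_{N^{-j}}})^{\kappa }$ by a Mikhlin-type argument, and for that you assert that the moment bounds of Lemma \ref{al2} give uniform $C^{k}$ control of $\psi ^{\tilde{\nu}_{R}}$ on a fixed annulus. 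This is not available: a derivative of order $m$ of $\psi ^{\tilde{\nu}_{R}}$ is formally $\int (iy)^{\gamma }e^{i\xi \cdot y}\tilde{\nu}_{R}(dy)$, whose large-jump part requires $\int_{|y|>1}|y|^{m}\tilde{\nu}_{R}(dy)<\infty $, while Lemma \ref{al2} only controls $\int (|y|\wedge 1)$, $\int (|y|^{2}\wedge 1)$ or $\int (|y|^{2}\wedge |y|)$ against $\tilde{\nu}_{R}$. For a general $\nu \in \mathfrak{A}_{sym}^{\alpha }$ these tail moments are infinite; indeed for $\alpha \in (0,1)$ with $\int_{|y|>1}|y|\,\nu (dy)=\infty $ the exponent $\psi ^{\nu }$ need not even be $C^{1}$, since its formal gradient $-\int \sin (\xi \cdot y)\,y\,\nu (dy)$ diverges. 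So the integration-by-parts estimate of $|x|^{n}\check{m}_{j}(x)$ cannot be run on the untruncated symbol, and this is exactly the obstruction the lemma has to overcome. (The same objection applies to your treatment of $(I-L^{\nu })^{\kappa }$ and of the block $j=0$.)

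The paper's route for (ii) avoids differentiating the full symbol. It uses the probabilistic representation $(a-L^{\tilde{\nu}_{R}})^{-\kappa }g=c_{-\kappa}\int_{0}^{\infty }e^{-at}t^{\kappa }\mathbf{E}\,g(x+Z_{t}^{\tilde{\nu}_{R}})\frac{dt}{t}$ together with Lemma \ref{le66} and Corollary \ref{c1}: split $\tilde{\nu}_{R}$ into the small-jump part $\tilde{\nu}_{R,\varepsilon }$ and the rest, so that $e^{\psi ^{\tilde{\nu}_{R}}t}\hat{g}=e^{\psi ^{\tilde{\nu}_{R,\varepsilon }}t}\hat{g}\cdot e^{\psi t}$, where the second factor is the characteristic function of a probability measure and therefore does not increase $L^{1}$ norms under convolution, while the truncated exponent $\psi ^{\tilde{\nu}_{R,\varepsilon }}$ has all derivatives uniformly bounded (the truncated measure has all moments, controlled by Lemma \ref{al2}) and obeys the ellipticity bound (\ref{f44}) coming from \textbf{B}. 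Weighted $L^{2}$ estimates then give $\sup_{R}\int |\mathbf{E}g(x+Z_{t}^{\tilde{\nu}_{R}})|\,dx\leq Ce^{-ct}$, whence $\int |H_{r}^{j}|\,dx\leq Cw(N^{-j})^{\kappa }$ uniformly in $j$ and $r$. To salvage your Fourier-analytic version you would have to reproduce this small-jump/large-jump factorization at the symbol level; as written, the multiplier estimate does not close.
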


\begin{proof}
Let $u\in C_{b}^{\infty }\left( \mathbf{R}^{d}\right) ,\tilde{\phi},\tilde{%
\phi}_{0}\in C_{0}^{\infty }\left( \mathbf{R}^{d}\right) $, be such that $%
\tilde{\phi}\phi =\phi ,\tilde{\phi}_{0}\phi _{0}=\phi _{0}$, where $\phi
_{0}=\mathcal{F}^{-1}\varphi _{0},$ and $\phi ,\varphi _{0}$ are the
functions in the definition of the spaces.

(i) Let $r\in \left[ 0,1\right] $. Then 
\begin{eqnarray*}
\left( r-L^{\nu }\right) ^{\kappa }u\ast \varphi _{j} &=&\mathcal{F}^{-1}%
\left[ \left( r-\psi ^{\nu }\right) ^{\kappa }\tilde{\phi}\left( N^{-j}\cdot
\right) \hat{u}\phi \left( N^{-j}\cdot \right) \right]  \\
&=&\int H_{r}^{j}\left( x-y\right) u\ast \varphi _{j}\left( y\right) dy,x\in 
\mathbf{R}^{d},j\geq 1, \\
\left( r-L^{\nu }\right) ^{\kappa }u\ast \varphi _{0} &=&\mathcal{F}^{-1}%
\left[ \left( r-\psi ^{\nu }\right) ^{\kappa }\tilde{\phi}_{0}\hat{u}\phi
_{0}\right] =\int H_{r}^{0}\left( x-y\right) u\ast \varphi _{0}\left(
y\right) dy,x\in \mathbf{R}^{d},
\end{eqnarray*}%
where $H_{r}^{j}=\mathcal{F}^{-1}\left[ \left( r-\psi ^{\nu }\right)
^{\kappa }\tilde{\phi}\left( N^{-j}\cdot \right) \right] ,j\geq 1,H_{r}^{0}=%
\mathcal{F}^{-1}\left[ \left( r-\psi ^{\nu }\right) ^{\kappa }\tilde{\phi}%
_{0}\right] .$ Let%
\begin{equation*}
G_{j}=w\left( N^{-j}\right) ^{-\kappa }\mathcal{F}^{-1}\left[ \left(
rw\left( N^{-j}\right) -\psi ^{\tilde{\nu}_{N^{-j}}}\right) ^{\kappa }\tilde{%
\phi}\right] ,j\geq 1.
\end{equation*}%
Since%
\begin{equation*}
\left( r-\psi ^{\nu }\left( \xi \right) \right) ^{\kappa }\tilde{\phi}\left(
N^{-j}\xi \right) =w\left( N^{-j}\right) ^{-\kappa }[rw\left( N^{-j}\right)
-\psi ^{\tilde{\nu}_{N^{-j}}}\left( N^{-j}\xi \right) ]^{\kappa }\tilde{\phi}%
\left( N^{-j}\xi \right) ,\xi \in \mathbf{R}^{d},
\end{equation*}%
it follows by Lemma \ref{le6} that%
\begin{eqnarray*}
\int \left\vert H_{r}^{j}\left( x\right) \right\vert dx &=&\int \left\vert
G_{j}\left( x\right) \right\vert dx \\
&=&w\left( N^{-j}\right) ^{-\kappa }\int \left\vert \left( rw\left(
N^{-j}\right) -L^{\tilde{\nu}_{N^{-j}}}\right) ^{\kappa }\tilde{\varphi}%
\left( x\right) \right\vert dx \\
&\leq &Cw\left( N^{-j}\right) ^{-\kappa },j\geq 0.
\end{eqnarray*}

(ii) Let $0<\kappa <\beta ,r\in \left[ 0,1\right] $. Then for $j\geq 1,$ 
\begin{eqnarray*}
u\ast \varphi _{j} &=&\left( r-L^{\nu }\right) ^{\kappa }\left( r-L^{\nu
}\right) ^{-\kappa }u\ast \varphi _{j} \\
&=&\mathcal{F}^{-1}\left[ \left( r-\psi ^{\nu }\right) ^{-\kappa }\tilde{\phi%
}\left( N^{-j}\cdot \right) \left( r-\psi ^{\nu }\right) ^{\kappa }\hat{u}%
\phi \left( N^{-j}\cdot \right) \right]  \\
&=&\int H_{r}^{j}\left( x-y\right) \left( r-L^{\nu }\right) ^{\kappa }u\ast
\varphi _{j}\left( y\right) dy,x\in \mathbf{R}^{d},j\geq 1,
\end{eqnarray*}%
where 
\begin{equation*}
H_{r}^{j}=\mathcal{F}^{-1}\left[ \left( r-\psi ^{\nu }\right) ^{-\kappa }%
\tilde{\phi}\left( N^{-j}\cdot \right) \right] ,j\geq 1,r\geq 0.
\end{equation*}%
Let%
\begin{equation*}
G_{j}=w\left( N^{-j}\right) ^{\kappa }\mathcal{F}^{-1}\left[ \left( rw\left(
N^{-j}\right) -\psi ^{\tilde{\nu}_{N^{-j}}}\right) ^{-\kappa }\tilde{\phi}%
\right] ,j\geq 1.
\end{equation*}%
It follows by Corollary \ref{c1} that there is $C$ independent of $r\geq
0,j\geq 1,$ so that%
\begin{eqnarray*}
\int \left\vert H_{r}^{j}\left( x\right) \right\vert dx &=&\int \left\vert
G_{j}\left( x\right) \right\vert dx=w\left( N^{-j}\right) ^{\kappa }\int
\left\vert \left( rw\left( N^{-j}\right) -L^{\tilde{\nu}_{N^{-j}}}\right)
^{-\kappa }\tilde{\varphi}\left( x\right) \right\vert dx \\
&\leq &Cw\left( N^{-j}\right) ^{\kappa }.
\end{eqnarray*}%
On the other hand, 
\begin{equation*}
\left\vert u\ast \varphi _{0}\right\vert _{0}\leq C\left\vert u\right\vert
_{0}.
\end{equation*}%
The statement follows.
\end{proof}

For $\beta >0,\kappa >0$, we define the following norms:%
\begin{eqnarray*}
\left\vert u\right\vert _{\nu ,\kappa ,\beta } &=&\left\vert u\right\vert
_{0}+\left\vert L^{\nu ;\kappa }u\right\vert _{\beta ,\infty },u\in
C_{b}^{\infty }\left( \mathbf{R}^{d}\right) , \\
\left\vert \left\vert u\right\vert \right\vert _{\nu ;\kappa ,\beta }
&=&\left\vert \left( I-L^{\nu }\right) ^{\kappa }u\right\vert _{\beta
,\infty },u\in C_{b}^{\infty }\left( \mathbf{R}^{d}\right) ,
\end{eqnarray*}%
with $\nu $ satisfying \textbf{A} and \textbf{B}. An immediate consequence
of Lemma \ref{le7} is the following norm equivalence.

\begin{corollary}
\label{pr3}Let $\nu \in \mathfrak{A}_{sym}^{\alpha }$ be a L\'{e}vy measure
satisfying \textbf{A }and \textbf{B}, $\beta >0,\kappa >0.$ Then norms $%
\left\vert u\right\vert _{\nu ,\kappa ,\beta }$, $\left\Vert u\right\Vert
_{\nu ,\kappa ,\beta }$ and $\left\vert u\right\vert _{\beta +\kappa ,\infty
}$ are equivalent on $C_{b}^{\infty }\left( \mathbf{R}^{d}\right) $.

The same holds with $\nu \in \mathfrak{A}^{\alpha }$ satisfying \textbf{A }%
and \textbf{B} if $\kappa \in \mathbf{N}$.
\end{corollary}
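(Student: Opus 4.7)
The plan is to derive the equivalence directly from Lemma \ref{le7}, handled separately as two pairs of one-sided inequalities, with the $|u|_0$ term in $|u|_{\nu,\kappa,\beta}$ controlled by the embedding from Lemma \ref{equiv}.

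For the upper bounds $|u|_{\nu,\kappa,\beta}, \|u\|_{\nu,\kappa,\beta} \leq C|u|_{\beta+\kappa,\infty}$, I apply Lemma \ref{le7}(i) with the chosen $\beta$ and $\kappa$ to obtain
\begin{equation*}
|L^{\nu;\kappa} u|_{\beta,\infty} \leq C|u|_{\beta+\kappa,\infty}, \qquad |(I-L^{\nu})^{\kappa} u|_{\beta,\infty} \leq C|u|_{\beta+\kappa,\infty}.
\end{equation*}
For the extra $|u|_0$ appearing in $|u|_{\nu,\kappa,\beta}$, Lemma \ref{equiv} gives $|u|_0 \leq C|u|_{\beta+\kappa,\infty}$ (since the hypotheses on $\nu$ and $w$ needed there are included in \textbf{A}). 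Adding these yields both upper bounds.

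For the lower bounds $|u|_{\beta+\kappa,\infty} \leq C|u|_{\nu,\kappa,\beta}$ and $|u|_{\beta+\kappa,\infty} \leq C\|u\|_{\nu,\kappa,\beta}$, I apply Lemma \ref{le7}(ii) with the role of $\beta$ there played by $\beta+\kappa$ (so that $\beta + \kappa - \kappa = \beta$). This produces
\begin{equation*}
|u|_{\beta+\kappa,\infty} \leq C\bigl[|L^{\nu;\kappa} u|_{\beta,\infty} + |u|_{0}\bigr] = C|u|_{\nu,\kappa,\beta},
\end{equation*}
\begin{equation*}
|u|_{\beta+\kappa,\infty} \leq C|(I-L^{\nu})^{\kappa} u|_{\beta,\infty} = C\|u\|_{\nu,\kappa,\beta},
\end{equation*}
which is exactly what is needed. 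The symmetric case requires $\nu \in \mathfrak{A}_{sym}^{\alpha}$ so that fractional powers $L^{\nu;\kappa}, (I-L^{\nu})^{\kappa}$ are defined via Remark \ref{re0} and Lemma \ref{le3}; the integer case $\kappa \in \mathbf{N}$ for $\nu \in \mathfrak{A}^{\alpha}$ follows identically, since Lemma \ref{le7} is stated in both forms.

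There is no real obstacle beyond bookkeeping: the content is entirely contained in Lemma \ref{le7}, and the only subtlety is noticing the correct index shift (applying (i) at level $\beta$ but (ii) at level $\beta+\kappa$) and using Lemma \ref{equiv} to absorb the $|u|_0$ summand in $|u|_{\nu,\kappa,\beta}$.
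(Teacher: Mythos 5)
Your proposal is correct and follows essentially the same route as the paper: both directions come from Lemma \ref{le7} (part (i) at level $\beta$, part (ii) at level $\beta+\kappa$), with Lemma \ref{equiv} absorbing the $\left\vert u\right\vert _{0}$ term. The only difference is organizational — you prove each of the two norms two-sidedly equivalent to $\left\vert u\right\vert _{\beta +\kappa ,\infty }$, while the paper chains the three norms in a single cycle; the content is identical.
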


\begin{proof}
Let $\beta ,\kappa >0$. By Lemma \ref{le7},%
\begin{eqnarray*}
\left\vert \left( I-L^{\nu }\right) ^{\kappa }u\right\vert _{\beta ,\infty }
&\leq &C\left\vert u\right\vert _{\beta +\kappa ,\infty }\leq C\left[
\left\vert L^{\nu ;\kappa }u\right\vert _{\beta ,\infty }+\left\vert
u\right\vert _{0}\right]  \\
&=&C\left\vert u\right\vert _{\nu ;\beta ,\kappa },u\in C_{b}^{\infty
}\left( \mathbf{R}^{d}\right) .
\end{eqnarray*}%
On the other hand, by Lemmas \ref{equiv} and \ref{le7},%
\begin{eqnarray*}
\left\vert u\right\vert _{0} &\leq &C\left\vert u\right\vert _{\beta ,\infty
}\leq C\left\vert u\right\vert _{\beta +\kappa ,\infty }\leq C\left\vert
\left( I-L^{\nu }\right) ^{\kappa }u\right\vert _{\beta ,\infty }, \\
\left\vert L^{\nu ;\kappa }u\right\vert _{\beta ,\infty } &\leq &C\left\vert
u\right\vert _{\beta +\kappa ,\infty }\leq C\left\vert \left( I-L^{\nu
}\right) ^{\kappa }u\right\vert _{\beta ,\infty },u\in C_{b}^{\infty }\left( 
\mathbf{R}^{d}\right) .
\end{eqnarray*}
\end{proof}

\begin{corollary}
\label{pr4}Let $\nu \in \mathfrak{A}_{sym}^{\alpha }$ and $\pi \in \mathfrak{%
A}^{\alpha }$ be a L\'{e}vy measure satisfying \textbf{A }and \textbf{B }%
such that $w_{\pi }\sim w_{\nu }$. Then for any $\kappa \in \mathbf{N},\beta
>0,$ there are constants $c,C>0$ so that 
\begin{equation*}
\left\vert \left( L^{\pi }\right) ^{\kappa }u\right\vert _{\beta ,\infty
}\leq C_{1}\left\vert u\right\vert _{\nu ;\kappa ,\beta }\leq C_{2}\left[
\left\vert \left( L^{\pi }\right) ^{\kappa }u\right\vert _{\beta ,\infty
}+\left\vert u\right\vert _{0}\right] ,u\in C_{b}^{\infty }\left( \mathbf{R}%
^{d}\right) .
\end{equation*}
\end{corollary}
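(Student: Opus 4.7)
The plan is to sandwich both quantities in the statement between constant multiples of the common Besov norm $|u|_{\beta+\kappa,\infty}$. The hypothesis $w_{\pi}\sim w_{\nu}$ guarantees that this Besov norm---whose defining expression in (\ref{j2}) depends on a choice of $w$---is the same (up to a multiplicative constant) whether built from $w_{\nu}$ or from $w_{\pi}$, so results stated for one measure can be transferred to the other without changing norms. This is the only place the equivalence of $w$'s enters.

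Corollary \ref{pr3}, applied to the symmetric measure $\nu\in\mathfrak{A}^{\alpha}_{sym}$ and to the integer exponent $\kappa\in\mathbf{N}$, yields the two-sided equivalence
\[
C^{-1}\,|u|_{\beta+\kappa,\infty}\;\leq\;|u|_{\nu;\kappa,\beta}\;\leq\;C\,|u|_{\beta+\kappa,\infty},\qquad u\in C_{b}^{\infty}(\mathbf{R}^{d}).
\]
For the first inequality of Corollary \ref{pr4} I apply Lemma \ref{le7}(i) to $\pi$---the integer-$\kappa$ clause, which needs only \textbf{A} and \textbf{B} and not symmetry---to obtain $|(L^{\pi})^{\kappa}u|_{\beta,\infty}=|L^{\pi;\kappa}u|_{\beta,\infty}\leq C\,|u|_{\beta+\kappa,\infty}$, and then combine with the displayed equivalence to get $|(L^{\pi})^{\kappa}u|_{\beta,\infty}\leq C_{1}\,|u|_{\nu;\kappa,\beta}$.

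For the second inequality I first use the displayed equivalence to pass from $|u|_{\nu;\kappa,\beta}$ to $|u|_{\beta+\kappa,\infty}$, and then invoke Lemma \ref{le7}(ii) applied to $\pi$ with the parameters $\tilde{\beta}:=\beta+\kappa$, $\tilde{\kappa}:=\kappa$ (the requirement $0<\tilde{\kappa}<\tilde{\beta}$ holds because $\beta>0$, and the integer-$\kappa$ clause applies to $\pi$ since $\pi$ satisfies \textbf{A} and \textbf{B}):
\[
|u|_{\nu;\kappa,\beta}\;\leq\;C\,|u|_{\beta+\kappa,\infty}\;\leq\;C\left[|L^{\pi;\kappa}u|_{\beta,\infty}+|u|_{0}\right]\;=\;C\left[|(L^{\pi})^{\kappa}u|_{\beta,\infty}+|u|_{0}\right].
\]
There is no substantive obstacle beyond careful bookkeeping: the integer-$\kappa$ clauses of Corollary \ref{pr3} and Lemma \ref{le7} were formulated precisely so as to apply to a not-necessarily-symmetric $\pi$, and $w_{\pi}\sim w_{\nu}$ is what lets the pivot norm $|\cdot|_{\beta+\kappa,\infty}$ be used interchangeably on both sides.
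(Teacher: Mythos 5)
Your proposal is correct and follows essentially the same route as the paper, which likewise sandwiches both quantities via the pivot norm $\left\vert u\right\vert _{\kappa +\beta ,\infty }$, citing Corollary \ref{pr3} for both $\nu $ and $\pi $ where you cite Lemma \ref{le7} directly for the $\pi $ half (Corollary \ref{pr3} is itself just a repackaging of Lemma \ref{le7}). Your explicit remark that $w_{\pi }\sim w_{\nu }$ is what makes the pivot norm unambiguous is a point the paper leaves implicit, but it does not change the argument.
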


\begin{proof}
Indeed, by Corollary \ref{pr3}, 
\begin{eqnarray*}
\left\vert \left( L^{\pi }\right) ^{\kappa }u\right\vert _{\beta ,\infty }
&\leq &C\left\vert u\right\vert _{\kappa +\beta ,\infty }\leq C\left\vert
u\right\vert _{\nu ;\kappa ,\beta } \\
&\leq &C\left\vert u\right\vert _{\kappa +\beta ,\infty }\leq C_{2}\left[
\left\vert \left( L^{\pi }\right) ^{\kappa }u\right\vert _{\beta ,\infty
}+\left\vert u\right\vert _{0}\right] ,u\in C_{b}^{\infty }\left( \mathbf{R}%
^{d}\right) .
\end{eqnarray*}
\end{proof}

\subsubsection{Extension of norm equivalence to $\tilde{C}_{\infty \infty }^{%
\protect\beta }\left( \mathbf{R}^{d}\right) $}

We extend the definition of $\left( a-L^{\nu }\right) ^{\kappa }$ and the
norm equivalence (see Corollary \ref{pr3} above) from $C_{b}^{\infty }\left( 
\mathbf{R}^{d}\right) $ to $\tilde{C}_{\infty \infty }^{\beta }\left( 
\mathbf{R}^{d}\right) $. We start with the following observation.

\begin{remark}
\label{pr5} Let $0<\beta ^{\prime }<\beta $. Then for each $\varepsilon >0$
there is a constant $C_{\varepsilon }>0$ so that 
\begin{equation*}
\left\vert u\right\vert _{\beta ^{\prime },\infty }\leq \varepsilon
\left\vert u\right\vert _{\beta ,\infty }+C_{\varepsilon }\left\vert
u\right\vert _{0},u\in \tilde{C}_{\infty \infty }^{\beta }\left( \mathbf{R}%
^{d}\right) .
\end{equation*}

Indeed, For each $\varepsilon >0$ there is $K>1$ so that $w\left(
N^{-j}\right) ^{\beta -\beta ^{\prime }}\leq \varepsilon $ if $j\geq K$.
Hence 
\begin{eqnarray*}
w\left( N^{-j}\right) ^{-\beta ^{\prime }}\left\vert u\ast \varphi
_{j}\right\vert _{0} &=&w\left( N^{-j}\right) ^{\beta -\beta ^{\prime
}}w\left( N^{-j}\right) ^{-\beta }\left\vert u\ast \varphi _{j}\right\vert
_{0} \\
&\leq &\varepsilon \left\vert u\right\vert _{\beta ,\infty }+\max_{k\leq K}
\left[ w\left( N^{-k}\right) ^{-\beta ^{\prime }}\left\vert u\ast \varphi
_{k}\right\vert _{0}\right]  \\
&\leq &\varepsilon \left\vert u\right\vert _{\beta ,\infty }+C_{\varepsilon
}\left\vert u\right\vert _{0}.
\end{eqnarray*}
\end{remark}

\begin{proposition}
\label{app} Let $\beta \in \left( 0,\infty \right) $, $u\in \tilde{C}%
_{\infty ,\infty }^{\beta }\left( \mathbf{R}^{d}\right) $. Then there exists
a sequence $u_{n}\in C_{b}^{\infty }\left( \mathbf{R}^{d}\right) ,$ such
that 
\begin{equation*}
\left\vert u\right\vert _{\beta ,\infty }\leq \liminf_{n}\left\vert
u_{n}\right\vert _{\beta ,\infty },\quad \left\vert u_{n}\right\vert _{\beta
,\infty }\leq C\left\vert u\right\vert _{\beta ,\infty }
\end{equation*}%
for some $C>0$ that only depends on $d,N$.

Moreover, for any $0<\beta ^{\prime }<\beta $, 
\begin{equation*}
\left\vert u_{n}-u\right\vert _{\beta ^{\prime },\infty }\rightarrow 0\text{
as }n\rightarrow \infty .
\end{equation*}
\end{proposition}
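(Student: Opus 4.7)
The plan is to build $u_n$ by truncating the Littlewood--Paley decomposition of $u$. For $n \geq 2$ set $u_n := \sum_{j=0}^{n} u \ast \varphi_j$. By Lemma \ref{equiv} the series converges to $u$ uniformly on $\mathbf{R}^d$, so in particular $u_n \to u$ in $L^\infty$; since each $\varphi_j \in \mathcal{S}(\mathbf{R}^d)$ and $u$ is bounded, every $u \ast \varphi_j$ belongs to $C_b^\infty(\mathbf{R}^d)$, and hence so does the finite sum $u_n$.

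The key computational input is the almost-orthogonality of $\{\varphi_j\}$. From the Fourier-side construction, $\mathrm{supp}\,\hat{\varphi}_j \subseteq \{N^{j-1} \leq |\xi| \leq N^{j+1}\}$ for $j \geq 1$ and $\mathrm{supp}\,\hat{\varphi}_0 \subseteq \{|\xi| \leq N\}$, so $\varphi_j \ast \varphi_k = 0$ whenever $|j - k| \geq 2$. Combining this with the identity $u \ast \varphi_k = \sum_j u \ast \varphi_j \ast \varphi_k = \sum_{|j-k|\leq 1} u \ast \varphi_j \ast \varphi_k$ yields three regimes: $u_n \ast \varphi_k = u \ast \varphi_k$ for $0 \leq k \leq n - 1$; $u_n \ast \varphi_k = 0$ for $k \geq n + 2$; and for $k \in \{n, n+1\}$, $u_n \ast \varphi_k$ is a partial sum controlled by $C \sum_{|j-k|\leq 1} |u \ast \varphi_j|_0$ via Young's inequality, where the constant is uniform in $j$ because $\|\varphi_j\|_{L^1} = \|\varphi\|_{L^1}$ for $j \geq 1$ (by $\varphi_j(x) = N^{jd}\varphi(N^j x)$).

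Using the O-RV property of $w$ at zero, the ratio $w(N^{-j})/w(N^{-k})$ for $|j - k| \leq 1$ is uniformly bounded, so in all three regimes $w(N^{-k})^{-\beta} |u_n \ast \varphi_k|_0 \leq C |u|_{\beta,\infty}$, giving $|u_n|_{\beta,\infty} \leq C |u|_{\beta,\infty}$. The lower semicontinuity is immediate: since $u_n \to u$ uniformly, $u_n \ast \varphi_k \to u \ast \varphi_k$ uniformly for each fixed $k$, so $w(N^{-k})^{-\beta} |u \ast \varphi_k|_0 \leq \liminf_n |u_n|_{\beta,\infty}$; taking supremum in $k$ gives $|u|_{\beta,\infty} \leq \liminf_n |u_n|_{\beta,\infty}$.

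For the convergence in the weaker norm, the same case analysis shows $(u - u_n) \ast \varphi_k = 0$ for $k \leq n - 1$, while for $k \geq n$ one has $|(u - u_n) \ast \varphi_k|_0 \leq C w(N^{-k})^{\beta} |u|_{\beta,\infty}$, so
\begin{equation*}
|u - u_n|_{\beta',\infty} \leq C |u|_{\beta,\infty} \sup_{k \geq n} w(N^{-k})^{\beta - \beta'} = C |u|_{\beta,\infty}\, w(N^{-n})^{\beta - \beta'},
\end{equation*}
which tends to zero as $n \to \infty$ because $\beta > \beta'$ and $w(N^{-n}) \to 0$ (assumption \textbf{A} forces $\delta(0+) = \infty$, hence $w(r) \to 0$ as $r \to 0$). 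The only delicate point is the almost-orthogonality check at the low-frequency end $j = 0$ and verifying the scale-invariance of $\|\varphi_j\|_{L^1}$, both of which follow directly from the construction in \eqref{j1}--\eqref{j0}; the remaining estimates are routine applications of Young's inequality and the O-RV control on $w$ recorded in Lemma \ref{al1}.
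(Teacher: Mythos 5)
Your proposal is correct and follows essentially the same route as the paper: truncation of the Littlewood--Paley series $u_n=\sum_{j=0}^n u\ast\varphi_j$, the almost-orthogonality $\varphi_j\ast\varphi_k=0$ for $|j-k|\geq 2$, and Young's inequality to compare $|u_n\ast\varphi_k|_0$ with neighboring blocks of $u$. The only (harmless) variation is in the last step: the paper deduces $|u_n-u|_{\beta',\infty}\to 0$ from the interpolation inequality of Remark \ref{pr5} together with $|u_n-u|_0\to 0$, whereas you estimate the blocks of $u-u_n$ directly and obtain the explicit rate $w(N^{-n})^{\beta-\beta'}$, which is slightly more quantitative but otherwise equivalent.
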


\begin{proof}
Set $u_{n}=\sum_{j=0}^{n}u\ast \varphi _{j},n\geq 1$. Obviously, $u_{n}\in
C_{b}^{\infty }\left( \mathbf{R}^{d}\right) ,n\geq 1,$ and by Lemma \ref%
{equiv}, $u=\sum_{j=0}^{\infty }u\ast \varphi _{j}$ is a bounded continuous
function. Since%
\begin{equation*}
\varphi _{k}=\sum_{l=-1}^{1}\varphi _{k+l}\ast \varphi _{k},k\geq 1,~\varphi
_{0}=\left( \varphi _{0}+\varphi _{1}\right) \ast \varphi _{0},
\end{equation*}%
we have for $n>1,$%
\begin{eqnarray*}
\left( u-u_{n}\right) \ast \varphi _{k} &=&0,~k<n, \\
\left( u-u_{n}\right) \ast \varphi _{k} &=&\left( u\ast \varphi _{k-1}+u\ast
\varphi _{k}+u\ast \varphi _{k+1}\right) \ast \varphi _{k},~k>n+1, \\
\left( u-u_{n}\right) \ast \varphi _{n} &=&\left( u\ast \varphi
_{n+1}\right) \ast \varphi _{n}, \\
\left( u-u_{n}\right) \ast \varphi _{n+1} &=&\left( u\ast \varphi
_{n+1}+u\ast \varphi _{n+2}\right) \ast \varphi _{n+1}.
\end{eqnarray*}%
Hence there is a constant $C\,$\ so that 
\begin{equation*}
\left\vert u_{n}\ast \varphi _{j}\right\vert _{0}\leq C\left\vert u\ast
\varphi _{j}\right\vert _{0},j\geq 0,n>1,
\end{equation*}%
and for $n>1,$%
\begin{equation*}
\sup_{j<n}w\left( N^{-j}\right) ^{-\beta }\left\vert u\ast \varphi
_{j}\right\vert _{0}=\sup_{j<n}w\left( N^{-j}\right) ^{-\beta }\left\vert
u_{n}\ast \varphi _{j}\right\vert _{0}\leq \left\vert u_{n}\right\vert
_{\beta ,\infty }.
\end{equation*}%
Thus%
\begin{equation*}
\left\vert u\right\vert _{\beta ,\infty }\leq \underline{\lim }%
_{n}\left\vert u_{n}\right\vert _{\beta ,\infty },
\end{equation*}%
and%
\begin{equation*}
\left\vert u_{n}\right\vert _{\beta ,\infty }\leq C\left\vert u\right\vert
_{\beta ,\infty },n>1.
\end{equation*}%
Now, by Remark \ref{pr5}, for each $\varepsilon >0,$ there is a constant $%
C_{\varepsilon }$ so that%
\begin{equation*}
\left\vert u_{n}-u\right\vert _{\beta ^{\prime },\infty }\leq \varepsilon
\left( \left\vert u_{n}\right\vert _{\beta ,\infty }+\left\vert u\right\vert
_{\beta ,\infty }\right) +C_{\varepsilon }\left\vert u_{n}-u\right\vert _{0}.
\end{equation*}%
Since by Lemma \ref{equiv}, $\left\vert u_{n}-u\right\vert _{0}\rightarrow 0$%
, the statement follows.
\end{proof}

Using the approximating sequence introduced in Proposition \ref{app}, we can
extend $L^{\nu ;\kappa }u,\left( I-L^{\nu }\right) ^{\kappa }u,0<\kappa
<\beta ,$ to all $u\in \tilde{C}_{\infty ,\infty }^{\beta }\left( \mathbf{R}%
^{d}\right) ,\beta >0$.

\begin{proposition}
\label{cont} Let $\nu $ be a L\'{e}vy measure satisfying \textbf{A }and 
\textbf{B}, $\beta >0$ and $u\in \tilde{C}_{\infty ,\infty }^{\beta }\left( 
\mathbf{R}^{d}\right) $. Let $u_{n}\in C_{b}^{\infty }\left( \mathbf{R}%
^{d}\right) $ be an approximating sequence of $u$ in Proposition \ref{app}.
Then for each $\kappa \in \left( 0,\beta \right) $ there are bounded
continuous functions, denoted $\left( I-L^{\nu }\right) ^{\kappa }u,L^{\nu
;\kappa }u\in \tilde{C}_{\infty \infty }^{\beta -\kappa },$ so that for any $%
0<\beta ^{\prime }<\beta -\kappa ,$%
\begin{equation*}
\left\vert L^{\nu ;\kappa }u_{n}-L^{\nu ;\kappa }u\right\vert _{\beta
^{\prime },\infty }+\left\vert \left( I-L^{\nu }\right) ^{\kappa }u-\left(
I-L^{\nu }\right) ^{\kappa }u_{n}\right\vert _{\beta ^{\prime },\infty
}\rightarrow 0
\end{equation*}%
as $n\rightarrow \infty $. Moreover, for each $\kappa \in \left( 0,\beta
\right) $ there is $C>0$ independent of $u\in \tilde{C}_{\infty ,\infty
}^{\beta }\left( \mathbf{R}^{d}\right) $ so that%
\begin{equation}
\left\vert L^{\nu ;\kappa }u\right\vert _{\beta -\kappa ,\infty }\leq
C\left\vert u\right\vert _{\beta ,\infty },\left\vert \left( I-L^{\nu
}\right) ^{\kappa }u\right\vert _{\beta -\kappa ,\infty }\leq C\left\vert
u\right\vert _{\beta ,\infty },  \label{f22}
\end{equation}%
and 
\begin{equation}
\left\vert u\right\vert _{\beta ,\infty }\leq C\left[ \left\vert L^{\nu
;\kappa }u\right\vert _{\beta -\kappa ,\infty }+\left\vert u\right\vert _{0}%
\right] ,\left\vert u\right\vert _{\beta ,\infty }\leq C\left\vert \left(
I-L^{\nu }\right) ^{\kappa }u\right\vert _{\beta -\kappa ,\infty }.
\label{f23}
\end{equation}
\end{proposition}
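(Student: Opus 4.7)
The plan is to define $L^{\nu;\kappa}u$ and $(I-L^{\nu})^{\kappa}u$ as limits of the smooth approximants $L^{\nu;\kappa}u_{n}$ and $(I-L^{\nu})^{\kappa}u_{n}$, check that these limits lie in $\tilde{C}_{\infty,\infty}^{\beta-\kappa}(\mathbf{R}^{d})$, and then transfer the $C_{b}^{\infty}$-inequalities of Corollary~\ref{pr3} to $u$ via a lower semicontinuity plus Littlewood--Paley partial-sum argument.

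First I would verify that $\{L^{\nu;\kappa}u_{n}\}$ and $\{(I-L^{\nu})^{\kappa}u_{n}\}$ are Cauchy in $\tilde{C}_{\infty,\infty}^{\beta'}(\mathbf{R}^{d})$ for every $\beta'\in(0,\beta-\kappa)$. Since $u_{n}-u_{m}\in C_{b}^{\infty}$, Lemma~\ref{le7}(i) yields
\[
|L^{\nu;\kappa}u_{n}-L^{\nu;\kappa}u_{m}|_{\beta',\infty}+|(I-L^{\nu})^{\kappa}(u_{n}-u_{m})|_{\beta',\infty}\leq C|u_{n}-u_{m}|_{\beta'+\kappa,\infty},
\]
and by Proposition~\ref{app} the right-hand side tends to zero because $\beta'+\kappa<\beta$. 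Lemma~\ref{equiv} shows the $\beta'$-norm controls the sup norm, so both sequences converge uniformly to bounded continuous functions that I denote $L^{\nu;\kappa}u$ and $(I-L^{\nu})^{\kappa}u$; the asserted convergence in $|\cdot|_{\beta',\infty}$ is the Cauchy limit itself.

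Next, for~\eqref{f22} I would use Lemma~\ref{le7}(i) together with Proposition~\ref{app} to obtain $|L^{\nu;\kappa}u_{n}|_{\beta-\kappa,\infty}\leq C|u_{n}|_{\beta,\infty}\leq C'|u|_{\beta,\infty}$, uniformly in $n$. Since $L^{\nu;\kappa}u_{n}\ast\varphi_{j}\to L^{\nu;\kappa}u\ast\varphi_{j}$ uniformly for every fixed $j$, passing to $\liminf_{n}$ inside the defining supremum gives
\[
|L^{\nu;\kappa}u|_{\beta-\kappa,\infty}\leq\liminf_{n}|L^{\nu;\kappa}u_{n}|_{\beta-\kappa,\infty}\leq C|u|_{\beta,\infty},
\]
and analogously for $(I-L^{\nu})^{\kappa}u$; in particular both limits lie in $\tilde{C}_{\infty,\infty}^{\beta-\kappa}$.

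The lower bound~\eqref{f23} is the delicate part. Lemma~\ref{le7}(ii) applied to $u_{n}\in C_{b}^{\infty}$ gives $|u_{n}|_{\beta,\infty}\leq C[|L^{\nu;\kappa}u_{n}|_{\beta-\kappa,\infty}+|u_{n}|_{0}]$ and $|u_{n}|_{\beta,\infty}\leq C|(I-L^{\nu})^{\kappa}u_{n}|_{\beta-\kappa,\infty}$. Taking $\liminf$ in $n$, uniform convergence yields $|u_{n}|_{0}\to|u|_{0}$ and Proposition~\ref{app} gives $|u|_{\beta,\infty}\leq\liminf_{n}|u_{n}|_{\beta,\infty}$. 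The main obstacle, as I see it, is to bound $|L^{\nu;\kappa}u_{n}|_{\beta-\kappa,\infty}$ by $C|L^{\nu;\kappa}u|_{\beta-\kappa,\infty}$ uniformly in $n$ (and similarly for $(I-L^{\nu})^{\kappa}$). I would handle this by repeating the Littlewood--Paley partial-sum computation from the proof of Proposition~\ref{app}: since $L^{\nu;\kappa}$ is a Fourier multiplier, $L^{\nu;\kappa}u_{n}=\sum_{j=0}^{n}L^{\nu;\kappa}(u\ast\varphi_{j})$ with each summand spectrally supported in $\{N^{j-1}\leq|\xi|\leq N^{j+1}\}$. Consequently $(L^{\nu;\kappa}u_{n})\ast\varphi_{l}$ coincides with $L^{\nu;\kappa}u\ast\varphi_{l}$ for $1\leq l\leq n-1$, vanishes for $l\geq n+2$, and near the boundary is dominated by the three neighbouring pieces $L^{\nu;\kappa}u\ast\varphi_{l+k}$, $k\in\{-1,0,1\}$. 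After taking $\liminf_{n}$, these pieces combine to give~\eqref{f23}, completing the extension.
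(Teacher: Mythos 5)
Your proposal is correct and follows essentially the same route as the paper: establish the Cauchy property of $L^{\nu;\kappa}u_{n}$ and $(I-L^{\nu})^{\kappa}u_{n}$ in $|\cdot|_{\beta',\infty}$ via Lemma~\ref{le7}(i) and Proposition~\ref{app}, pass to the limit blockwise in $j$ to get (\ref{f22}), and for (\ref{f23}) exploit the compatibility of the partial-sum approximants with the Fourier multiplier so that $|(a-L^{\nu})^{\kappa}u_{n}\ast\varphi_{j}|_{0}\leq C|(a-L^{\nu})^{\kappa}u\ast\varphi_{j}|_{0}$ (up to neighbouring blocks), which is exactly the paper's step ``by the definition of the approximation sequence.'' The step you flag as the main obstacle is resolved the same way in the paper, so there is nothing further to add.
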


\begin{proof}
Let $u\in \tilde{C}_{\infty ,\infty }^{\beta }\left( \mathbf{R}^{d}\right) $%
. By Proposition \ref{app}, there is a a sequence $u_{n}\in C_{b}^{\infty
}\left( \mathbf{R}^{d}\right) $ such that 
\begin{equation*}
\left\vert u\right\vert _{\beta ,\infty }\leq \liminf_{n}\left\vert
u_{n}\right\vert _{\beta ,\infty },\quad \left\vert u_{n}\right\vert _{\beta
,\infty }\leq C\left\vert u\right\vert _{\beta ,\infty },n\geq 1,
\end{equation*}%
for some $C>0$ independent of $u$, and for any $\kappa <\beta ,\beta
^{\prime }\in \left( 0,\beta -\kappa \right) ,$ see Lemma \ref{le7} as well, 
\begin{eqnarray*}
&&\left\vert L^{\nu ;\kappa }u_{n}-L^{\nu ;\kappa }u_{m}\right\vert _{\beta
^{\prime },\infty }+\left\vert \left( I-L^{\nu }\right) ^{\kappa
}u_{n}-\left( I-L^{\nu }\right) ^{\kappa }u_{m}\right\vert _{\beta ^{\prime
},\infty } \\
&\leq &C\left\vert u_{n}-u_{m}\right\vert _{\beta ^{\prime }}\rightarrow 0%
\text{ as }n,m\rightarrow \infty .
\end{eqnarray*}%
Hence there are bounded continuous functions, denoted \thinspace $L^{\mu
;\kappa }u,\left( I-L^{\nu }\right) ^{\kappa }u,$ so that%
\begin{equation*}
\left\vert L^{\nu ;\kappa }u_{n}-L^{\nu ;\kappa }u\right\vert
_{0}+\left\vert \left( I-L^{\nu }\right) ^{\kappa }u_{n}-\left( I-L^{\nu
}\right) ^{\kappa }u\right\vert _{0}\rightarrow 0
\end{equation*}%
as $n\rightarrow \infty .$ Thus%
\begin{eqnarray*}
&&\left\vert L^{\nu ;\kappa }u_{n}\ast \varphi _{j}-L^{\nu ;\kappa }u\ast
\varphi _{j}\right\vert _{0} \\
&&+\left\vert \left( I-L^{\nu }\right) ^{\kappa }u_{n}\ast \varphi
_{j}-\left( I-L^{\nu }\right) ^{\kappa }u\ast \varphi _{j}\right\vert _{0} \\
&\rightarrow &0,\quad j\geq 0,
\end{eqnarray*}%
as $n\rightarrow \infty $. Now, for each $m>1,$ and $a=0,1,$%
\begin{eqnarray*}
&&\sup_{j\leq m}w\left( N^{-j}\right) ^{-\beta -\kappa }\left\vert \left(
a-L^{\nu }\right) ^{\kappa }u\ast \varphi _{j}\right\vert _{0} \\
&=&\lim_{n\rightarrow \infty }\sup_{j\leq m}w\left( N^{-j}\right) ^{-\beta
-\kappa }\left\vert \left( a-L^{\nu }\right) ^{\kappa }u_{n}\ast \varphi
_{j}\right\vert _{0} \\
&\leq &\sup_{n}\left\vert \left( a-L^{\nu }\right) ^{\kappa
}u_{n}\right\vert _{\beta -\kappa }\leq \sup_{n}C\left\vert u_{n}\right\vert
_{\beta ,\infty }\leq C\left\vert u\right\vert _{\beta ,\infty }.
\end{eqnarray*}%
Hence $\left( a-L^{\nu }\right) ^{\kappa }u\in \tilde{C}_{\infty \infty
}^{\beta -\kappa }\left( \mathbf{R}^{d}\right) ,a=0,1$, and (\ref{f22})
holds.

Now for every $j\geq 0,$ we have

\begin{eqnarray}
\left[ \left( a-L^{\nu }\right) ^{\kappa }u\right] \ast \varphi _{j}
&=&\lim_{n}\left[ \left( a-L^{\nu }\right) ^{\kappa }u_{n}\right] \ast
\varphi _{j}  \label{f24} \\
&=&\left( a-L^{\nu }\right) ^{\kappa }\left[ u\ast \varphi _{j}\right]  
\notag
\end{eqnarray}%
uniformly. By the definition of the approximation sequence (see proof of
Proposition \ref{app}),%
\begin{equation*}
\left\vert \left( a-L^{\nu }\right) ^{\kappa }\left[ u_{n}\ast \varphi _{j}%
\right] \right\vert _{0}\leq C\left\vert \left( a-L^{\nu }\right) ^{\kappa }%
\left[ u\ast \varphi _{j}\right] \right\vert _{0},j\geq 0.
\end{equation*}%
Hence%
\begin{eqnarray*}
\left\vert u\right\vert _{\beta ,\infty } &\leq &\liminf_{n}\left\vert
u_{n}\right\vert _{\beta ,\infty }\leq C\liminf_{n}\left\vert \left(
I-L^{\nu }\right) ^{\kappa }u_{n}\right\vert _{\beta -\kappa ,\infty } \\
&\leq &C\left\vert \left( I-L^{\nu }\right) ^{\kappa }\right\vert _{\beta
-\kappa ,\infty },
\end{eqnarray*}
and similarly,%
\begin{eqnarray*}
\left\vert u\right\vert _{\beta ,\infty } &\leq &\liminf_{n}\left\vert
u_{n}\right\vert _{\beta ,\infty }\leq C\liminf_{n}\left[ \left\vert L^{\nu
;\kappa }u_{n}\right\vert _{\beta -\kappa ,\infty }+\left\vert
u_{n}\right\vert _{0}\right]  \\
&\leq &C\left[ \left\vert L^{\nu ;\kappa }u\right\vert _{\beta -\kappa
,\infty }+\left\vert u\right\vert _{0}\right] .
\end{eqnarray*}%
The statement is proved.
\end{proof}

\begin{proposition}
\label{thm4}Let $\nu \in \mathfrak{A}_{sym}^{\alpha }$ be a L\'{e}vy measure
satisfying \textbf{A }and \textbf{B}, $\beta >0,\kappa >0.$ Then norms $%
\left\vert u\right\vert _{\nu ,\kappa ,\beta }$, $\left\Vert u\right\Vert
_{\nu ,\kappa ,\beta }$ and $\left\vert u\right\vert _{\beta +\kappa ,\infty
}$ are equivalent on $\tilde{C}_{\infty \infty }^{\beta +\kappa }\left( 
\mathbf{R}^{d}\right) $.
\end{proposition}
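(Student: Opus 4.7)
The proposition asks us to extend Corollary \ref{pr3} (which establishes the equivalence of the three norms on $C_b^\infty(\mathbf{R}^d)$) to the larger space $\tilde{C}_{\infty,\infty}^{\beta+\kappa}(\mathbf{R}^d)$. The plan is that this follows almost directly from Proposition \ref{cont}, which has already done the essential work: it defines $L^{\nu;\kappa}u$ and $(I-L^\nu)^\kappa u$ for $u \in \tilde{C}_{\infty,\infty}^{\gamma}(\mathbf{R}^d)$ when $\kappa < \gamma$ and provides the key two-sided estimates.

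The strategy is to apply Proposition \ref{cont} with the Hölder index $\gamma := \beta + \kappa$ and operator power $\kappa$ (noting $\kappa < \beta + \kappa$, so the hypothesis of Proposition \ref{cont} is satisfied). This immediately yields
\begin{equation*}
|L^{\nu;\kappa}u|_{\beta,\infty} \leq C|u|_{\beta+\kappa,\infty}, \quad |(I-L^\nu)^\kappa u|_{\beta,\infty} \leq C|u|_{\beta+\kappa,\infty}
\end{equation*}
from (\ref{f22}), and the reverse estimates
\begin{equation*}
|u|_{\beta+\kappa,\infty} \leq C\bigl[|L^{\nu;\kappa}u|_{\beta,\infty} + |u|_0\bigr], \quad |u|_{\beta+\kappa,\infty} \leq C|(I-L^\nu)^\kappa u|_{\beta,\infty}
\end{equation*}
from (\ref{f23}). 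The first pair of inequalities gives $\|u\|_{\nu,\kappa,\beta} \leq C|u|_{\beta+\kappa,\infty}$ directly, and $|u|_{\nu,\kappa,\beta} \leq C|u|_{\beta+\kappa,\infty}$ after noting $|u|_0 \leq C|u|_{\beta,\infty} \leq C|u|_{\beta+\kappa,\infty}$ by Lemma \ref{equiv} (since the Besov norms are monotone in the smoothness index in a controlled way through Remark \ref{pr5}). The second pair immediately provides the reverse bounds $|u|_{\beta+\kappa,\infty} \leq C|u|_{\nu,\kappa,\beta}$ and $|u|_{\beta+\kappa,\infty} \leq C\|u\|_{\nu,\kappa,\beta}$.

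There is essentially no obstacle: the hard work of extending the fractional operators from smooth functions to generalized Hölder functions, and obtaining the compatibility estimates, has already been carried out in Proposition \ref{cont} through the approximation procedure of Proposition \ref{app}. The only point that deserves a line of verification is that $|u|_0$ is indeed dominated by $|u|_{\beta+\kappa,\infty}$ for $u \in \tilde{C}_{\infty,\infty}^{\beta+\kappa}(\mathbf{R}^d)$; this is Lemma \ref{equiv}. Thus the proof reduces to a short citation of Proposition \ref{cont} together with Lemma \ref{equiv}, with the conclusion that the three norms $|u|_{\nu,\kappa,\beta}$, $\|u\|_{\nu,\kappa,\beta}$, $|u|_{\beta+\kappa,\infty}$ are mutually equivalent on $\tilde{C}_{\infty,\infty}^{\beta+\kappa}(\mathbf{R}^d)$, the constants depending only on $d,\beta,\kappa,N$ and the measure $\nu$ through its constants in \textbf{A} and \textbf{B}.
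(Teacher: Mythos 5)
Your proposal is correct and follows essentially the same route as the paper: the authors simply repeat the chain of inequalities from Corollary \ref{pr3}, substituting the estimates (\ref{f22})--(\ref{f23}) of Proposition \ref{cont} for Lemma \ref{le7}, exactly as you do. Your additional remark that $\left\vert u\right\vert _{0}\leq C\left\vert u\right\vert _{\beta +\kappa ,\infty }$ via Lemma \ref{equiv} is the same ingredient the paper uses implicitly, so there is nothing to add.
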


\begin{proof}
We show the equivalence by repeating proof of Corollary \ref{pr3} where the
equivalence of the same norms on $C_{b}^{\infty }\left( \mathbf{R}%
^{d}\right) $ was derived. Only instead of Lemma \ref{le7} we use
Proposition \ref{cont}.
\end{proof}

\section{Proof of main theorem}

We assume in this section that \textbf{A }, \textbf{B }and \textbf{C}
hold. First we solve the equation with smooth input functions.

\begin{proposition}
\label{thm1}Let $\nu \in \mathfrak{A}^{\alpha }$, $\beta \in \left( 0,1\right)
$, $\lambda \geq 0$. Assume that $f\left( t,x\right) \in C_{b}^{\infty }\left(
H_{T}\right) $. Then there is a unique solution $u\in C_{b}^{\infty }\left(
H_{T}\right) $ to 
\begin{eqnarray}
\partial _{t}u\left( t,x\right)  &=&L^{\nu }u\left( t,x\right) -\lambda
u\left( t,x\right) +f\left( t,x\right) ,  \label{eeq1} \\
u\left( 0,x\right)  &=&0,\qquad \left( t,x\right) \in \left[ 0,T\right]
\times \mathbf{R}^{d}.  \notag
\end{eqnarray}
\end{proposition}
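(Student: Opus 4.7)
The plan is to construct the solution by a Duhamel formula using the Lévy semigroup associated with $Z_t^\nu$, and then to establish uniqueness via a martingale/Itô argument. Set $P_t^\nu g(x) = \mathbf{E}\,g(x + Z_t^\nu)$ and define
\begin{equation*}
u(t,x) := \int_0^t e^{-\lambda(t-s)}\, P_{t-s}^\nu f(s,\cdot)(x)\, ds
       = \int_0^t e^{-\lambda(t-s)}\, \mathbf{E}\, f\!\left(s, x + Z_{t-s}^\nu\right) ds.
\end{equation*}
The main obstacle will be verifying that $u \in C_b^\infty(H_T)$ and that it actually satisfies the PDE pointwise; uniqueness will then follow by a short probabilistic argument.

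First I would check the regularity. Because $f \in C_b^\infty(H_T)$, one can move any spatial derivative $D^\gamma$ inside the expectation by dominated convergence, so
\begin{equation*}
D^\gamma u(t,x) = \int_0^t e^{-\lambda(t-s)}\, \mathbf{E}\, D^\gamma f\!\left(s, x+Z_{t-s}^\nu\right) ds,
\end{equation*}
and therefore $|D^\gamma u|_0 \leq T |D^\gamma f|_0$. Using the fact that for $g \in C_b^\infty(\mathbf{R}^d)$ the function $v(t,x) = P_t^\nu g(x)$ is smooth and satisfies $\partial_t v = L^\nu v$ (this follows from Itô's formula applied to $g(x+Z_t^\nu)$ and the definition of $L^\nu$ in terms of $\chi_\alpha$ and $\tilde J$ in (\ref{2})), one differentiates $u$ in $t$ by splitting
\begin{equation*}
u(t,x) = \int_0^t e^{-\lambda r}\, P_r^\nu f(t-r,\cdot)(x)\, dr
\end{equation*}
(after the change of variables $r = t-s$) and differentiating under the integral. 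A short computation gives
\begin{equation*}
\partial_t u(t,x) = f(t,x) - \lambda u(t,x) + \int_0^t e^{-\lambda(t-s)} L^\nu P_{t-s}^\nu f(s,\cdot)(x)\, ds = L^\nu u(t,x) - \lambda u(t,x) + f(t,x),
\end{equation*}
where the last equality uses that $L^\nu$ commutes with the semigroup on $C_b^\infty$ and can be pulled outside the $s$-integral (the integrand decays in $y$ fast enough by the boundedness of $D^\gamma f$, so one invokes Fubini together with the integrability assumption $\int |y|^2 \wedge 1\, d\nu < \infty$ from $\nu \in \mathfrak{A}^\alpha$, splitting the $\chi_\alpha$ part and the large-jump part as in (\ref{2})). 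Boundedness of $\partial_t u$ is then automatic from the identity, and higher $t$-derivatives are obtained by iterating: $\partial_t^{k+1} D^\gamma u = L^\nu \partial_t^k D^\gamma u - \lambda \partial_t^k D^\gamma u + \partial_t^k D^\gamma f$, all of which are bounded by induction. Continuity at $t=0$ together with $u(0,\cdot)\equiv 0$ is immediate from the formula.

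Finally, for uniqueness, suppose $u_1,u_2 \in C_b^\infty(H_T)$ both solve (\ref{eeq1}) and set $w = u_1 - u_2 \in C_b^\infty(H_T)$; then $\partial_t w = L^\nu w - \lambda w$ and $w(0,\cdot)=0$. Fix $(T_0,x) \in H_T$ and apply Itô's formula to
\begin{equation*}
M_s = e^{-\lambda s}\, w(T_0 - s,\, x + Z_s^\nu),\qquad s \in [0,T_0],
\end{equation*}
using the Lévy–Itô decomposition (\ref{2}) for $Z_s^\nu$. The drift term produced by Itô's formula equals $e^{-\lambda s}\bigl[-\partial_t w + L^\nu w - \lambda w\bigr](T_0-s,\,x+Z_s^\nu)\,ds = 0$ by the PDE, so $M_s$ is a bounded (hence true) martingale. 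Taking expectations at $s=0$ and $s=T_0$ gives
\begin{equation*}
w(T_0,x) = \mathbf{E}\, M_0 = \mathbf{E}\, M_{T_0} = e^{-\lambda T_0}\, \mathbf{E}\, w(0,\, x+Z_{T_0}^\nu) = 0.
\end{equation*}
Since $(T_0,x)$ was arbitrary, $w \equiv 0$. The main delicate points I anticipate are (i) justifying differentiation under the expectation and the interchange of $L^\nu$ with the semigroup $P_t^\nu$ using the moment conditions built into $\mathfrak{A}^\alpha$, and (ii) making the Itô computation rigorous with the singular jump term when $\alpha = 1$, which is handled by the $\chi_\alpha$ cutoff and the compensation $\int_{R<|y|\le R'} y\, d\nu = 0$ built into the definition of $\mathfrak{A}^\alpha$.
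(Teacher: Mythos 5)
Your proposal is correct and follows essentially the same route as the paper: both construct the solution by the Duhamel formula $u(t,x)=\int_0^t e^{-\lambda(t-s)}\mathbf{E}f(s,x+Z_{t-s}^{\nu })\,ds$ and verify it via It\^{o}'s formula for the L\'{e}vy process (the paper applies It\^{o} to $e^{-\lambda(r-s)}f(s,x+Z_r^{\nu }-Z_s^{\nu })$ and integrates over $s$, while you differentiate the Duhamel integral using the Kolmogorov identity $\partial_t P_t^{\nu }=L^{\nu }P_t^{\nu }$ — the same computation arranged differently). Your uniqueness argument, applying It\^{o}'s formula to $e^{-\lambda s}w(T_0-s,x+Z_s^{\nu })$ and taking expectations, is identical to the paper's.
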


\begin{proof}
\textsc{Existence. }Denote $F\left( r,Z_{r}^{\nu }\right) =e^{-\lambda
\left( r-s\right) }f\left( s,x+Z_{r}^{\nu }-Z_{s}^{\nu }\right) ,s\leq r\leq
t,$ and apply the It\^{o} formula to $F\left( r,Z_{r}^{\nu }\right) $ on $%
\left[ s,t\right] $. 
\begin{eqnarray*}
&&e^{-\lambda \left( t-s\right) }f\left( s,x+Z_{t}^{\nu }-Z_{s}^{\nu
}\right) -f\left( s,x\right)  \\
&=&-\lambda \int_{s}^{t}F\left( r,Z_{r}^{\nu }\right) dr+\int_{s}^{t}\int
\chi _{\alpha }\left( y\right) y\cdot \nabla F\left( r,Z_{r-}^{\nu }\right) 
\tilde{J}\left( dr,dy\right)  \\
&&+\int_{s}^{t}\int \left[ F\left( r,Z_{r-}^{\nu }+y\right) -F\left(
r,Z_{r-}^{\nu }\right) -\chi _{\alpha }\left( y\right) y\cdot \nabla F\left(
r,Z_{r-}^{\nu }\right) \right] J\left( dr,dy\right) .
\end{eqnarray*}%
Take expectation for both sides and use the stochastic Fubini theorem, 
\begin{eqnarray*}
&&e^{-\lambda \left( t-s\right) }\mathbf{E}f\left( s,x+Z_{t}^{\nu
}-Z_{s}^{\nu }\right) -f\left( s,x\right)  \\
&=&-\lambda \int_{s}^{t}e^{-\lambda \left( r-s\right) }\mathbf{E}f\left(
s,x+Z_{r}^{\nu }-Z_{s}^{\nu }\right) dr+\int_{s}^{t}L^{\nu }e^{-\lambda
\left( r-s\right) }\mathbf{E}f\left( s,x+Z_{r}^{\nu }-Z_{s}^{\nu }\right) dr.
\end{eqnarray*}%
Integrate both sides over $\left[ 0,t\right] $ with respect to $s$ and
obtain 
\begin{eqnarray*}
&&\int_{0}^{t}e^{-\lambda \left( t-s\right) }\mathbf{E}f\left(
s,x+Z_{t}^{\nu }-Z_{s}^{\nu }\right) ds-\int_{0}^{t}f\left( s,x\right) ds \\
&=&-\lambda \int_{0}^{t}\int_{0}^{r}e^{-\lambda \left( r-s\right) }\mathbf{E}%
f\left( s,x+Z_{r}^{\nu }-Z_{s}^{\nu }\right) dsdr \\
&&+\int_{0}^{t}L^{\nu }\int_{0}^{r}e^{-\lambda \left( r-s\right) }\mathbf{E}%
f\left( s,x+Z_{r}^{\nu }-Z_{s}^{\nu }\right) dsdr,
\end{eqnarray*}%
which shows $u\left( t,x\right) =\int_{0}^{t}e^{-\lambda \left( t-s\right) }%
\mathbf{E}f\left( s,x+Z_{t-s}^{\nu }\right) ds$ solves $\eqref{eeq1}$ in the
integral sense. Obviously, as a result of the dominated convergence theorem
and Fubini's theorem, $u\in C_{b}^{\infty }\left( H_{T}\right) $. And by the
equation, $u$ is continuously differentiable in $t$.

\textsc{Uniqueness. }Suppose there are two solutions $u_{1},u_{2}$ solving
the equation, then $u:=u_{1}-u_{2}$ solves 
\begin{eqnarray}
\partial _{t}u\left( t,x\right) &=&L^{\nu }u\left( t,x\right) -\lambda
u\left( t,x\right) ,  \label{uni} \\
u\left( 0,x\right) &=&0.  \notag
\end{eqnarray}

Fix any $t\in \left[ 0,T\right] $. Apply the It\^{o} formula to $v\left(
t-s,Z_{s}^{\nu }\right) :=e^{-\lambda s}u\left( t-s,x+Z_{s}^{\nu }\right) $, 
$0\leq s\leq t,$ over $\left[ 0,t\right] $ and take expectation for both
sides of the resulting identity, then 
\begin{equation*}
u\left( t,x\right) =-\mathbf{E}\int_{0}^{t}e^{-\lambda s}\left[ \left(
-\partial _{t}u-\lambda u+L^{\nu }u\right) \left( t-s,x+Z_{s-}^{\nu }\right) %
\right] ds=0.
\end{equation*}
\end{proof}

\subsection{H\"{o}lder estimates of the smooth solution}

First we derive the estimates of the solution corresponding to a smooth
input function.

\begin{proposition}
\label{thm11} Let $\nu \in \mathfrak{A}^{\alpha }$, $\beta >0$ and \textbf{A}-%
\textbf{C} hold. Let $u\in C_{b}^{\infty }\left( H_{T}\right) $ be the
unique solution $u$ to $\eqref{eeq1}$ with $f\in C_{b}^{\infty }\left(
H_{T}\right) $. Then%
\begin{eqnarray}
\left\vert u\right\vert _{\beta ,\infty } &\leq &C\rho _{\lambda }\left(
T\right) \left\vert f\right\vert _{\beta ,\infty },  \label{est5} \\
\left\vert u\right\vert _{1+\beta ,\infty } &\leq &C\left[ 1+\rho _{\lambda
}\left( T\right) \right] ~\left\vert f\right\vert _{\beta ,\infty }
\label{est1}
\end{eqnarray}%
and for any $\mu \in \lbrack 0,1]$, $t^{\prime }<t\leq T$,
\begin{eqnarray}
&&\left\vert u\left( t,\cdot \right) -u\left( t^{\prime },\cdot \right)
\right\vert _{\mu +\beta ,\infty }  \label{est2} \\
&\leq &C\left\{ \left( t-t^{\prime }\right) ^{1-\mu }+\left[ 1+\rho
_{\lambda }\left( T\right) \right] \left\vert t-t^{\prime }\right\vert
\right\} \left\vert f\right\vert _{\beta ,\infty },  \notag
\end{eqnarray}%
where $\rho _{\lambda }\left( T\right) =\frac{1}{\lambda }\wedge T.$
\end{proposition}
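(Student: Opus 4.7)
The plan is to run a Littlewood--Paley argument on the probabilistic representation
\[
  u(t,x)=\int_0^t e^{-\lambda(t-s)}\mathbf{E}f(s,x+Z^\nu_{t-s})\,ds
\]
from Proposition \ref{thm1}. Since convolution with $\varphi_j$ commutes with $\mathbf{E}$, the blocks $u_j:=u\ast\varphi_j$ and $f_j:=f\ast\varphi_j$ satisfy the analogous representation with $f_j$ in place of $f$. Estimate (\ref{est5}) is then immediate: $|u_j(t,\cdot)|_0\le\rho_\lambda(T)|f_j|_0\le\rho_\lambda(T)\,w(N^{-j})^\beta|f|_{\beta,\infty}$, and taking the supremum in $j$ gives $|u|_{\beta,\infty}\le C\rho_\lambda(T)|f|_{\beta,\infty}$.

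The heart of the matter is estimate (\ref{est1}). For $j\ge 1$ I would insert $f_j=f_j\ast\tilde\varphi_j$ with $\tilde\varphi_j$ from (\ref{ssch}) and rewrite the integrand as a convolution in $x$ against the kernel $K_j(r,z):=\mathbf{E}\tilde\varphi_j(z+Z^\nu_r)$. Using the self-similarity $Z^\nu_r\stackrel{d}{=}N^{-j}Z^{\tilde\nu_{N^{-j}}}_{r/w(N^{-j})}$ (with the drift convention encoded in $\chi_\alpha$ taking care of $\alpha=1$), the substitution $z'=N^jz$ reduces the $L^1$-norm of $K_j(r,\cdot)$ to Lemma \ref{le66} applied to $\tilde\varphi$, yielding the dissipative bound $\int|K_j(r,z)|\,dz\le Ce^{-cr/w(N^{-j})}$ uniformly in $j\ge 1$. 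Hence
\[
  |u_j(t,\cdot)|_0\le C|f_j|_0\int_0^t e^{-c(t-s)/w(N^{-j})}\,ds\le C\,w(N^{-j})^{1+\beta}|f|_{\beta,\infty},
\]
while the trivial bound applied to $j=0$ contributes the $\rho_\lambda(T)$ piece of (\ref{est1}).

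For the time regularity (\ref{est2}) I would combine two pointwise bounds on $|u_j(t,\cdot)-u_j(t',\cdot)|_0$. Writing $u_j(t)-u_j(t')=\int_{t'}^t(Lu_j-\lambda u_j+f_j)\,ds$ and using the frequency-localized estimate $|Lu_j|_0\le Cw(N^{-j})^{-1}|u_j|_0$ (from the same rescaling, in the spirit of Lemma \ref{le7}/Proposition \ref{cont}) together with the bound on $|u_j|_0$ just obtained and $\lambda\rho_\lambda(T)\le 1$, one gets, for $j\ge 1$,
\[
  |u_j(t,\cdot)-u_j(t',\cdot)|_0\le C(t-t')\,w(N^{-j})^\beta|f|_{\beta,\infty}.
\]
Together with the static bound $|u_j(t,\cdot)-u_j(t',\cdot)|_0\le 2|u_j|_0\le Cw(N^{-j})^{1+\beta}|f|_{\beta,\infty}$, the elementary inequality $\min(A,B)\le A^{1-\mu}B^\mu$ gives, after dividing by $w(N^{-j})^{\mu+\beta}$, the $(t-t')^{1-\mu}$ contribution; the $j=0$ block, handled separately by direct ODE estimation of $\partial_s u_0$, supplies the additional $[1+\rho_\lambda(T)](t-t')$ term.

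The main obstacle is the dissipative kernel bound $\int|K_j(r,z)|\,dz\le Ce^{-cr/w(N^{-j})}$ uniformly in $j$: it rests on careful scaling to the rescaled L\'evy measure $\tilde\nu_{N^{-j}}$, on the non-degeneracy assumption \textbf{B} (which supplies $c>0$ via the lower bound on $-\mathrm{Re}\,\psi^{\tilde\nu_{R,\varepsilon}}$ encountered in the proof of Lemma \ref{le66}), and on assumption \textbf{C} to control tail contributions uniformly in $R\in(0,1]$. The companion bound $|Lu_j|_0\le Cw(N^{-j})^{-1}|u_j|_0$ is proved in the same spirit and is essentially a localized version of the norm equivalence already established in Proposition \ref{cont}.
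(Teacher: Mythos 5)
Your proposal is correct, and for the core estimates (\ref{est5}) and (\ref{est1}) it coincides with the paper's argument: the same Littlewood--Paley blocks $u_j=u\ast\varphi_j$, the same insertion of $\tilde\varphi_j$, the same rescaling $Z_{r}^{\nu }\overset{d}{=}N^{-j}Z_{w_{j}r}^{\tilde{\nu}_{N^{-j}}}$ with $w_{j}=w\left( N^{-j}\right) ^{-1}$, and the same appeal to Lemma \ref{le66} for the uniform dissipative bound $\int \left\vert K_{j}\left( r,\cdot \right) \right\vert \leq Ce^{-cw_{j}r}$. Where you genuinely diverge is the time-regularity estimate (\ref{est2}). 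The paper splits the Duhamel representation of $u_{j}\left( t\right) -u_{j}\left( t^{\prime }\right) $ into three pieces (the tail integral over $[t^{\prime },t]$, the $e^{-\lambda \left( t-t^{\prime }\right) }-1$ factor, and the kernel difference $H^{j}\left( t-s,\cdot \right) -H^{j}\left( t^{\prime }-s,\cdot \right) $) and controls the last piece by writing it as $\mathbf{E}\int_{w_{j}\left( t^{\prime }-s\right) }^{w_{j}\left( t-s\right) }L^{\tilde{\nu}_{N^{-j}}}\tilde{\varphi}\left( \cdot +Z_{r}^{\tilde{\nu}_{N^{-j}}}\right) dr$ and invoking Corollary \ref{c2}. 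You instead integrate the equation itself, $u_{j}\left( t\right) -u_{j}\left( t^{\prime }\right) =\int_{t^{\prime }}^{t}\left( L^{\nu }u_{j}-\lambda u_{j}+f_{j}\right) ds$, and use the block-wise bound $\left\vert L^{\nu }u_{j}\right\vert _{0}\leq Cw_{j}\left\vert u_{j}\right\vert _{0}$ (which is exactly what the proof of Lemma \ref{le7}(i) with $\kappa =1$, $r=0$ establishes at the level of a single block, and needs no symmetry of $\nu $) together with $\lambda \rho _{\lambda }\left( T\right) \leq 1$; interpolating this Lipschitz-in-time bound against the static bound $2\sup_{t}\left\vert u_{j}\left( t\right) \right\vert _{0}\leq Cw_{j}^{-1}\left\vert f_{j}\right\vert _{0}$ via $\min \left( A,B\right) \leq A^{1-\mu }B^{\mu }$ recovers the paper's $w_{j}^{-\mu }\left( t-t^{\prime }\right) ^{1-\mu }$ bound. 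Your route avoids Corollary \ref{c2} entirely and is slightly more elementary; the paper's kernel-difference route is self-contained within the semigroup representation and does not presuppose that one may differentiate the equation block by block, but the two are equivalent in strength here and both yield (\ref{est2}), with the $j=0$ block supplying the $\left[ 1+\rho _{\lambda }\left( T\right) \right] \left\vert t-t^{\prime }\right\vert $ term in either case.
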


\begin{proof}
Since $f\in C_{b}^{\infty }\left( H_{T}\right) $, by Lemma \ref{equiv}, 
\begin{eqnarray*}
f\left( t,x\right)  &=&\left( f\left( t,\cdot \right) \ast \varphi
_{0}\left( \cdot \right) \right) \left( x\right) +\sum_{j=1}^{\infty }\left(
f\left( t,\cdot \right) \ast \varphi _{j}\left( \cdot \right) \right) \left(
x\right)  \\
&=&f_{0}\left( t,x\right) +\sum_{j=1}^{\infty }f_{j}\left( t,x\right)
,\left( t,x\right) \in H_{T}.
\end{eqnarray*}%
Accordingly, for $j\geq 0,$ 
\begin{equation*}
u_{j}\left( t,x\right) =u\left( t,x\right) \ast \varphi _{j}\left( x\right)
=\int_{0}^{t}e^{-\lambda \left( t-s\right) }\mathbf{E}f_{j}\left(
s,x+Z_{t-s}^{\nu }\right) ds,\left( t,x\right) \in H_{T},
\end{equation*}%
is the solution to $\eqref{eeq1}$ with input $f_{j}=f\ast \varphi _{j}$. In
terms of Fourier transform,%
\begin{eqnarray*}
&&\hat{u}_{j}\left( t,\xi \right)\\
  &=&\int_{0}^{t}\exp \left\{ -\left( \lambda
-\psi ^{\nu }\left( \xi \right) \right) \left( t-s\right) \right\} \hat{f}%
\left( s,\xi \right) \phi \left( N^{-j}\xi \right) ds \\
&=&\int_{0}^{t}e^{-\lambda \left( t-s\right) }\exp \left\{ \psi ^{\tilde{\nu}%
_{N^{-j}}}\left( N^{-j}\xi \right) w\left( N^{-j}\right) ^{-1}\left(
t-s\right) \right\} \tilde{\phi}\left( N^{-j}\xi \right) \hat{f}_{j}\left(
s,\xi \right) ds,~j\geq 1.
\end{eqnarray*}%
Denote $w_{j}=w\left( N^{-j}\right) ^{-1}$. Then for $j\geq 0,$%
\begin{eqnarray*}
&&u_{j}\left( t,x\right)  \\
&=&\int_{0}^{t}e^{-\lambda \left( t-s\right) }\int H^{j}\left(
t-s,x-y\right) f_{j}\left( s,y\right) dyds,t\in \left[ 0,T\right] ,x\in 
\mathbf{R}^{d},
\end{eqnarray*}%
with 
\begin{eqnarray*}
H^{j}\left( t,x\right)  &=&N^{jd}\mathbf{E}\tilde{\varphi}\left(
N^{j}x+Z_{w_{j}t}^{\tilde{\nu}_{N^{-j}}}\right) ,\left( t,x\right) \in
H_{T},j\geq 1, \\
H^{0}\left( t,x\right)  &=&\mathbf{E}\tilde{\varphi}_{0}\left( x+Z_{t}^{\nu
}\right) ,\left( t,x\right) \in H_{T}.
\end{eqnarray*}%
Hence%
\begin{equation}
\int \left\vert H^{j}\left( t,x\right) \right\vert dx=\int \left\vert
G^{j}\left( t,x\right) \right\vert dx,t>0,j\geq 0,  \label{f31}
\end{equation}%
with $G^{0}=H^{0}$ and%
\begin{equation*}
G^{j}\left( t,x\right) =\mathbf{E}\tilde{\varphi}\left( x+Z_{w_{j}t}^{\tilde{%
\nu}_{N^{-j}}}\right) ,\left( t,x\right) \in \mathbf{R}^{d},j\geq 1.
\end{equation*}%
First we estimate the solution itself. For $j\geq 1,$ by Lemma \ref{le66},%
\begin{eqnarray*}
\left\vert u_{j}\left( t,\cdot \right) \right\vert _{0} &\leq &\left\vert
f_{j}\right\vert _{0}\int_{0}^{t}e^{-\lambda \left( t-s\right) }\int
\left\vert G^{j}\left( t-s,x\right) \right\vert dxds \\
&\leq &\left\vert f_{j}\right\vert _{0}\int_{0}^{t}e^{-\lambda \left(
t-s\right) }e^{-cw_{j}\left( t-s\right) }ds\leq Cw_{j}^{-1}\left\vert
f_{j}\right\vert _{0}.
\end{eqnarray*}%
Directly,%
\begin{equation*}
\left\vert u_{0}\left( t,\cdot \right) \right\vert _{0}\leq \left\vert
f_{0}\right\vert _{0}\int_{0}^{t}e^{-\lambda \left( t-s\right) }ds\leq
\left( \frac{1}{\lambda }\wedge T\right) \left\vert f_{0}\right\vert _{0}.
\end{equation*}%
Hence%
\begin{equation*}
\left\vert u\right\vert _{1+\beta ,\infty }\leq C\left[ 1+\left( \frac{1}{%
\lambda }\wedge T\right) \right] \left\vert f\right\vert _{\beta ,\infty }.
\end{equation*}%
Now we estimate time differences. For fixed $0<t^{\prime }<t\leq T,j\geq 0,$%
\begin{eqnarray*}
&&u_{j}\left( t,x\right) -u_{j}\left( t^{\prime },x\right)  \\
&=&\int_{t^{\prime }}^{t}e^{-\lambda \left( t-s\right) }\int H^{j}\left(
t-s,x-y\right) f_{j}\left( s,y\right) dyds \\
&&+\left( e^{-\lambda \left( t-t^{\prime }\right) }-1\right)
\int_{0}^{t^{\prime }}e^{-\lambda \left( t^{\prime }-s\right) }\int
H^{j}\left( t-s,x-y\right) f_{j}\left( s,y\right) dyds \\
&&+\int_{0}^{t^{\prime }}e^{-\lambda \left( t^{\prime }-s\right) }\int \left[
H^{j}\left( t-s,x-y\right) -H^{j}\left( t^{\prime }-s,x-y\right) \right]
f_{j}\left( s,y\right) dyds \\
&=&A_{1}^{j}\left( x\right) +A_{2}^{j}\left( x\right) +A_{3}^{j}\left(
x\right) ,x\in \mathbf{R}^{d}.
\end{eqnarray*}%
First, by Lemma \ref{le66}, for $j\geq 1,$%
\begin{eqnarray*}
\left\vert A_{1}^{j}\right\vert _{0} &\leq &\int_{t^{\prime
}}^{t}e^{-\lambda \left( t-s\right) }\int \left\vert G^{j}\left(
t-s,y\right) \right\vert dyds\left\vert f_{j}\right\vert _{0} \\
&\leq &C\int_{t^{\prime }}^{t}e^{-\lambda \left( t-s\right)
}e^{-cw_{j}\left( t-s\right) }ds\left\vert f_{j}\right\vert _{0}\leq
C\int_{t^{\prime }}^{t}e^{-cw_{j}\left( t-s\right) }ds\left\vert
f_{j}\right\vert _{0} \\
&\leq &Cw_{j}^{-1}\left[ 1-e^{-cw_{j}\left( t-t^{\prime }\right) }\right]
\left\vert f_{j}\right\vert _{0}.
\end{eqnarray*}%
And
\begin{equation*}
\left\vert A_{1}^{0}\right\vert \leq C\left\vert f_{0}\right\vert
_{0}\int_{t^{\prime }}^{t}e^{-\lambda \left( t-s\right) }ds\leq C\left\vert
f_{j}\right\vert _{0}\left\vert t-t^{\prime }\right\vert .
\end{equation*}%
By (\ref{f31}) and Lemma \ref{le66}, for $j\geq 1,$%
\begin{eqnarray*}
&&\left\vert A_{2}^{j}\right\vert _{0} \\
&\leq &\left( 1-e^{-\lambda \left( t-t^{\prime }\right) }\right)
\int_{0}^{t^{\prime }}e^{-\lambda \left( t^{\prime }-s\right) }\int
\left\vert G^{j}\left( t-s,y\right) \right\vert dyds\left\vert
f_{j}\right\vert _{0} \\
&\leq &C\left( 1-e^{-\lambda \left( t-t^{\prime }\right) }\right)
\int_{0}^{t^{\prime }}e^{-\lambda \left( t^{\prime }-s\right)
}e^{-cw_{j}\left( t-s\right) }ds\left\vert f_{j}\right\vert _{0}.
\end{eqnarray*}%
Thus for $j\geq 1,$%
\begin{eqnarray}
&&\left\vert A_{2}^{j}\right\vert _{0}  \label{f34} \\
&\leq &C\left( 1-e^{-\lambda \left( t-t^{\prime }\right) }\right)
\int_{0}^{t^{\prime }}e^{-\lambda \left( t^{\prime }-s\right) }ds\left\vert
f_{j}\right\vert _{0}\leq C\left\vert f_{j}\right\vert _{0}\left\vert
t-t^{\prime }\right\vert ,  \notag
\end{eqnarray}%
in the mean time,%
\begin{equation}
\left\vert A_{2}^{j}\right\vert _{0}\leq C\left\vert f_{j}\right\vert
_{0}\int_{0}^{t^{\prime }}e^{-cw_{j}\left( t^{\prime }-s\right) }ds\leq
C\left\vert f_{j}\right\vert _{0}w_{j}^{-1}.  \label{f35}
\end{equation}%
For $j=0$,
\begin{eqnarray*}
\left\vert A_{2}^{0}\right\vert _{0} &\leq &C\left( 1-e^{-\lambda \left(
t-t^{\prime }\right) }\right) \int_{0}^{t^{\prime }}e^{-\lambda \left(
t^{\prime }-s\right) }ds\left\vert f_{j}\right\vert _{0} \\
&\leq &C\left\vert t-t^{\prime }\right\vert \lambda \int_{0}^{t^{\prime
}}e^{-\lambda \left( t^{\prime }-s\right) }ds\left\vert f_{j}\right\vert
_{0}\leq C\left\vert f_{j}\right\vert _{0}\left\vert t-t^{\prime
}\right\vert .
\end{eqnarray*}%
At last, for $j\geq 1$,
\begin{equation*}
\left\vert A_{3}^{j}\right\vert _{0}\leq \left\vert f_{j}\right\vert
_{0}\int_{0}^{t^{\prime }}\int \left\vert G^{j}\left( t-s,y\right)
-G^{j}\left( t^{\prime }-s,y\right) \right\vert dyds.
\end{equation*}%
Note for $s\leq t^{\prime },$%
\begin{eqnarray*}
&&G^{j}\left( t-s,y\right) -G^{j}\left( t^{\prime }-s,y\right)  \\
&=&\mathbf{E}\left[ \tilde{\varphi}\left( y+Z_{w_{j}\left( t-s\right) }^{%
\tilde{\nu}_{N^{-j}}}\right) -\tilde{\varphi}\left( y+Z_{w_{j}(t^{\prime
}-s)}^{\tilde{\nu}_{N^{-j}}}\right) \right]  \\
&=&\mathbf{E}\int_{w_{j}(t^{\prime }-s)}^{w_{j}(t-s)}L^{\tilde{\nu}_{N^{-j}}}%
\tilde{\varphi}\left( y+Z_{r}^{\tilde{\nu}_{N^{-j}}}\right) dr,
\end{eqnarray*}%
and by Corollary \ref{c2},%
\begin{eqnarray*}
&&\int \left\vert G^{j}\left( t-s,y\right) -G^{j}\left( t^{\prime
}-s,y\right) \right\vert dy \\
&\leq &C\int_{w_{j}(t^{\prime }-s)}^{w_{j}(t-s)}e^{-cr}dr\leq
Ce^{-cw_{j}\left( t^{\prime }-s\right) }\left[ 1-e^{-cw_{j}\left(
t-t^{\prime }\right) }\right].
\end{eqnarray*}%
Thus for $j\geq 1,$%
\begin{eqnarray*}
\left\vert A_{3}^{j}\right\vert _{0} &\leq &C\left\vert f_{j}\right\vert _{0}
\left[ 1-e^{-cw_{j}\left( t-t^{\prime }\right) }\right] \int_{0}^{t^{\prime
}}e^{-cw_{j}\left( t^{\prime }-s\right) }ds \\
&=&Cw_{j}^{-1}\left\vert f_{j}\right\vert _{0}\left[ 1-e^{-cw_{j}\left(
t-t^{\prime }\right) }\right] \left[ 1-e^{-cw_{j}t^{\prime }}\right]  \\
&\leq &C\left\vert f_{j}\right\vert _{0}w_{j}^{-1}\left( 1-e^{-cw_{j}\left(
t-t^{\prime }\right) }\right).
\end{eqnarray*}%
In addition,%
\begin{eqnarray*}
\left\vert A_{3}^{0}\right\vert _{0} &\leq &C\left\vert f_{0}\right\vert
_{0}\int_{0}^{t^{\prime }}e^{-\lambda \left( t^{\prime }-s\right)
}ds\left\vert t-t^{\prime }\right\vert  \\
&\leq &C\left( \frac{1}{\lambda }\wedge T\right) \left\vert f_{0}\right\vert
_{0}\left\vert t-t^{\prime }\right\vert .
\end{eqnarray*}%
Summarizing,%
\begin{equation*}
\left\vert u_{0}\left( t,\cdot \right) -u_{0}\left( t^{\prime },\cdot
\right) \right\vert _{0}\leq C\left[ 1+\left( \frac{1}{\lambda }\wedge
T\right) \right] \left\vert f_{0}\right\vert _{0}\left\vert t-t^{\prime
}\right\vert,
\end{equation*}%
and%
\begin{eqnarray*}
&&\left\vert u_{j}\left( t,\cdot \right) -u_{j}\left( t^{\prime },\cdot
\right) \right\vert _{0} \\
&\leq &C\left\vert f_{j}\right\vert _{0}\left[ \left( \left\vert t-t^{\prime
}\right\vert \wedge w_{j}^{-1}\right) +w_{j}^{-1}\left( 1-e^{-cw_{j}\left(
t-t^{\prime }\right) }\right) \right]  \\
&=&C\left\vert f_{j}\right\vert _{0}w_{j}^{-1}\left[ \left( \left\vert
t-t^{\prime }\right\vert w_{j}\right) \wedge 1+\left( 1-e^{-cw_{j}\left(
t-t^{\prime }\right) }\right) \right],
\end{eqnarray*}%
which leads to 
\begin{equation*}
\left\vert u_{j}\left( t,\cdot \right) -u_{j}\left( t^{\prime },\cdot
\right) \right\vert _{0}\leq Cw_{j}^{-\mu }\left( t-t^{\prime }\right)
^{1-\mu },~\mu \in \lbrack 0,1],~j\geq 1.
\end{equation*}%
Thus%
\begin{eqnarray*}
&&\left\vert u\left( t,\cdot \right) -u\left( t^{\prime },\cdot \right)
\right\vert _{\mu +\beta ,\infty } \\
&\leq &C\left\vert f\right\vert _{\beta ,\infty }\left\{ \left( t-t^{\prime
}\right) ^{1-\mu }+\left[ 1+\left( \frac{1}{\lambda }\wedge T\right) \right]
\left\vert t-t^{\prime }\right\vert \right\} 
\end{eqnarray*}
for any $\mu \in \lbrack 0,1]$. The statement is proved.
\end{proof}

\subsection{General H\"{o}lder inputs}

\textsc{Existence and Estimates. }Given $f\in \tilde{C}_{\infty ,\infty
}^{\beta }\left( H_{T}\right) $, by Proposition \ref{app}, we can find a
sequence of functions $f_{n}$ in $C_{b}^{\infty }\left( H_{T}\right) $ such
that 
\begin{equation*}
\left\vert f_{n}\right\vert _{\beta ,\infty }\leq C\left\vert f\right\vert
_{\beta ,\infty },\quad \left\vert f\right\vert _{\beta ,\infty }\leq
\liminf_{n}\left\vert f_{n}\right\vert _{\beta ,\infty },
\end{equation*}%
and for any $0<\beta ^{\prime }<\beta $, 
\begin{equation*}
\left\vert f_{n}-f\right\vert _{0}\leq C\left\vert f_{n}-f\right\vert
_{\beta ^{\prime },\infty }\rightarrow 0\text{ as }n\rightarrow \infty .
\end{equation*}%
According to Theorems \ref{thm1} and \ref{thm11}, for each $f_{n}\in
C_{b}^{\infty }\left( \mathbf{R}^{d}\right) $, there is a corresponding
solution $u_{n}\in C_{b}^{\infty }\left( H_{T}\right) :$%
\begin{equation}
u_{n}\left( t,x\right) =\int_{0}^{t}\left[ L^{\nu }u_{n}\left( r,x\right)
-\lambda u_{n}\left( r,x\right) +f_{n}\left( r,x\right) \right] dr,\left(
t,x\right) \in \left[ 0,T\right] \times \mathbf{R}^{d}.  \label{f33}
\end{equation}
By Theorem \ref{thm11}, 
\begin{eqnarray*}
&&\left\vert L^{\nu }u_{m}-L^{\nu }u_{n}\right\vert _{\beta ^{\prime
},\infty } \\
&\leq &C\left\vert L^{\mu }u_{m}-L^{\mu }u_{n}\right\vert _{\beta ^{\prime
},\infty }\leq C\left\vert u_{m}-u_{n}\right\vert _{1+\beta ^{\prime
},\infty } \\
&\leq &C\left\vert f_{m}-f_{n}\right\vert _{\beta ^{\prime },\infty
}\rightarrow 0,\text{ as }m,n\rightarrow \infty
\end{eqnarray*}%
for all $\beta ^{\prime }\in \left( 0,\beta \right) $, which by Lemma \ref%
{equiv} implies that 
\begin{equation*}
\left\vert u_{n}-u_{m}\right\vert _{0}+\left\vert L^{\nu }u_{m}-L^{\mu
}u_{n}\right\vert _{0}\rightarrow 0\text{ as }m,n\rightarrow \infty .
\end{equation*}

So, there is $u\in \tilde{C}_{\infty ,\infty }^{1+\beta ^{\prime }}\left( H_{T}\right) $ for
any $\beta ^{\prime }\in \left( 0,\beta \right) $ such that $\left\vert
u_{n}-u\right\vert _{1+\beta ^{\prime },\infty }\rightarrow 0$ as $%
n\rightarrow \infty $. Passing to the limit in (\ref{f33}) we see that (\ref%
{f33}) holds for $u$. Let $\beta ^{\prime }\in \left( 0,\beta \right) $ and $%
\beta -\beta ^{\prime }<q_{1}^{-1}$. Then%
\begin{equation*}
\left\vert L^{1+\beta ^{\prime }}u_{n}\right\vert _{\beta -\beta ^{\prime
},\infty }\leq C\left\vert u_{n}\right\vert _{1+\beta,\infty }\leq
C\left\vert f_{n}\right\vert _{\beta ,\infty }\leq C\left\vert f\right\vert
_{\beta,\infty }
\end{equation*}%
implies that%
\begin{equation*}
\left\vert L^{1+\beta ^{\prime }}u_{n}\left( t,x\right) -L^{1+\beta ^{\prime
}}u_{n}\left( t,y\right) \right\vert \leq C\left\vert f\right\vert _{\beta
,\infty }w\left( \left\vert x-y\right\vert \right) ^{\beta -\beta ^{\prime
}},~x,y\in \mathbf{R}^{d}.
\end{equation*}%
and passing to the limit we see that%
\begin{equation*}
\left\vert L^{1+\beta ^{\prime }}u\left( t,x\right) -L^{1+\beta ^{\prime
}}u\left( t,y\right) \right\vert \leq C\left\vert f\right\vert _{\beta
,\infty }w\left( \left\vert x-y\right\vert \right) ^{\beta -\beta ^{\prime
}},~x,y\in \mathbf{R}^{d}.
\end{equation*}%
Hence $L^{1+\beta ^{\prime }}u\in \tilde{C}_{\infty ,\infty }^{\beta -\beta
^{\prime }}\left( \mathbf{R}^{d}\right) ,$ i.e., $u\in \tilde{C}_{\infty ,\infty }^{1+\beta
}\left( H_{T}\right) $ and%
\begin{equation*}
\left\vert u\right\vert _{1+\beta ,\infty }\leq C\left\vert f\right\vert
_{\beta ,\infty }.
\end{equation*}%
The convergence of $u_{n}$ to $u$ implies easily other estimates.

\textsc{Uniqueness. }Suppose there are two solutions $u_{1},u_{2}\in \tilde{C%
}_{\infty ,\infty }^{1+\beta }\left( H_{T}\right) $ to (\ref{1'}), then $%
u:=u_{1}-u_{2}$ solves 
\begin{equation}
u\left( t,x\right) =\int_{0}^{t}\left[ L^{\nu }u\left( r,x\right) -\lambda
u\left( r,x\right) \right] dr,~\left( t,x\right) \in \left[ 0,T\right]
\times \mathbf{R}^{d}.  \label{f32}
\end{equation}%
Let $g\in C_{0}^{\infty }\left( \mathbf{R}^{d}\right) ,0\leq g\leq 1,\int
gdx=1$. For $\varepsilon >0$, set 
\begin{equation*}
u_{\varepsilon }\left( t,x\right) =\int u\left( t,y\right) g_{\varepsilon
}\left( x-y\right) dy=\int \upsilon \left( t,x-y\right) g_{\varepsilon
}\left( y\right) dy,~\left( t,x\right) \in H_{T},
\end{equation*}%
with $g_{\varepsilon }\left( x\right) =\varepsilon ^{-d}g\left(
x/\varepsilon \right) ,x\in \mathbf{R}^{d}$. Then $u_{\varepsilon }\in 
\tilde{C}_{b}^{\infty }\left( H_{T}\right) $ solves (\ref{f32}). Hence $%
u_{\varepsilon }=0$ for all $\varepsilon >0.$ Thus $u=0$, the solution is
unique.

\section{Appendix}

We simply state a few results that were used in this paper. Let $\nu \in 
\mathfrak{A}^{\alpha }$, and 
\begin{eqnarray*}
\delta \left( r\right)  &=&\delta _{\nu }\left( r\right) =\nu \left( \left\{
\left\vert y\right\vert >r\right\} \right) >0,r>0, \\
w &=&w_{\nu }\left( r\right) =\delta \left( r\right)
^{-1},r>0,\lim_{r\rightarrow 0}w\left( r\right) =0.
\end{eqnarray*}%
We assume that $w=w_{\nu }$ is an O-RV function at zero, i.e., 
\begin{equation*}
r_{1}\left( \varepsilon \right) =\overline{\lim_{x\rightarrow 0}}\frac{%
\delta \left( \varepsilon x\right) ^{-1}}{\delta \left( x\right) ^{-1}}%
<\infty ,\varepsilon >0.
\end{equation*}

By Theorem 2 in \cite{aa}, the following limits exist:%
\begin{equation}
p_{1}=\lim_{\varepsilon \rightarrow 0}\frac{\log r_{1}\left( \varepsilon
\right) }{\log \varepsilon }\leq q_{1}=\lim_{\varepsilon \rightarrow \infty }%
\frac{\log r_{1}\left( \varepsilon \right) }{\log \varepsilon }.  \label{af1}
\end{equation}

\begin{lemma}
\label{al1}Assume $w=w_{\nu }$ is an O-RV function at zero.

a) Let $\beta >0$ and $\tau >-\beta p_{1}.$ There is $C>0$ so that%
\begin{equation*}
\int_{0}^{x}t^{\tau }w\left( t\right) ^{\beta }\frac{dt}{t}\leq Cx^{\tau
}w\left( x\right) ^{\beta },x\in (0,1],
\end{equation*}%
and $\lim_{x\rightarrow 0}x^{\tau }w\left( x\right) ^{\beta }=0.$

b) Let $\beta >0$ and $\tau <-\beta q_{1}$. There is $C>0$ so that%
\begin{equation*}
\int_{x}^{1}t^{\tau }w\left( t\right) ^{\beta }\frac{dt}{t}\leq Cx^{\tau
}w\left( x\right) ^{\beta },x\in (0,1],\text{ }
\end{equation*}%
and $\lim_{x\rightarrow 0}x^{\tau }w\left( x\right) ^{\beta }=\infty .$

c) Let $\beta <0$ and $\tau >-\beta q_{1}$. There is $C>0$ so that 
\begin{equation*}
\int_{0}^{x}t^{\tau }w\left( t\right) ^{\beta }\frac{dt}{t}\leq Cx^{\tau
}w\left( x\right) ^{\beta },x\in (0,1],
\end{equation*}%
and $\lim_{x\rightarrow 0}x^{\tau }w\left( x\right) ^{\beta }=0.$

d) Let $\beta <0$ and $\tau <-\beta p_{1}$. There is $C>0$ so that 
\begin{equation*}
\int_{x}^{1}t^{\tau }w\left( t\right) ^{\beta }\frac{dt}{t}%
=\int_{1}^{x^{-1}}t^{-\tau }w\left( \frac{1}{t}\right) ^{\beta }\frac{dt}{t}%
\leq Cx^{\tau }w\left( x\right) ^{\beta },x\in (0,1],
\end{equation*}%
and $\lim_{x\rightarrow 0}x^{\tau }w\left( x\right) ^{\beta }=\infty .$
\end{lemma}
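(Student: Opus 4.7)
The plan is to reduce all four parts to the standard Potter-type envelopes for O-RV functions, which are an immediate consequence of the existence of the indices $p_1,q_1$ in (\ref{af1}); see \cite{aa} or \cite{bgt}. Namely, for every $\tilde p<p_1\le q_1<\tilde q$ there exist $C\ge 1$ and $x_0\in(0,1]$ such that
\[
 C^{-1}\bigl(s^{\tilde p}\wedge s^{\tilde q}\bigr)\;\le\;\frac{w(sx)}{w(x)}\;\le\;C\bigl(s^{\tilde p}\vee s^{\tilde q}\bigr)
\]
for all $x,sx\in(0,x_0]$; the analogous inequalities on $[x_0,1]$ are trivial from the boundedness and strict positivity of $w$ there. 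Specializing the right-hand side to $x=x_0$ yields the one-sided envelope $w(x)\le C_1 x^{\tilde p}$ near zero, and the left-hand side yields $w(x)\ge c_1 x^{\tilde q}$; these already imply each of the pointwise limit claims once $\tilde p$ or $\tilde q$ is chosen so that the resulting exponent $\tau+\beta\tilde p$ or $\tau+\beta\tilde q$ has the correct sign.

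For the integral bound in (a), substitute $t=sx$ to obtain
\[
 \int_0^x t^\tau w(t)^\beta\,\frac{dt}{t}=x^\tau w(x)^\beta\int_0^1 s^\tau\left(\frac{w(sx)}{w(x)}\right)^\beta\frac{ds}{s}.
\]
Since $\beta>0$ and $\tau>-\beta p_1$, pick $\tilde p$ with $-\tau/\beta<\tilde p<p_1$; the Potter upper bound applied for $s\le 1$ gives $(w(sx)/w(x))^\beta\le C s^{\beta\tilde p}$, so the $s$-integrand is dominated by a constant times $s^{\tau+\beta\tilde p-1}$, which is integrable near $0$ because $\tau+\beta\tilde p>0$. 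Part (b) uses the same substitution on $\int_x^1$, now with $s\in[1,1/x]$; choosing $\tilde q$ with $q_1<\tilde q<-\tau/\beta$ and applying the Potter upper bound for $s\ge 1$ reduces the $s$-integrand to a constant times $s^{\tau+\beta\tilde q-1}$, which is integrable at infinity since $\tau+\beta\tilde q<0$.

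Parts (c) and (d) are mirror images of (a) and (b), the only change being that $r\mapsto r^\beta$ reverses inequalities when $\beta<0$, so upper bounds on $(w(sx)/w(x))^\beta$ come from the Potter \emph{lower} bounds on the ratio. In (c), for $s\le 1$, use $w(sx)/w(x)\ge c\,s^{\tilde q}$ with $q_1<\tilde q<-\tau/\beta$ (possible since $\tau>-\beta q_1$ with $\beta<0$ means $-\tau/\beta>q_1$); in (d), for $s\ge 1$, use $w(sx)/w(x)\ge c\,s^{\tilde p}$ with $-\tau/\beta<\tilde p<p_1$. In each case the convergence condition on the $s$-integral reduces to exactly the hypothesis on $\tau$, and the corresponding asymptotic limit follows from the one-sided envelope on $w$ at zero. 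The only genuine difficulty is the sign bookkeeping when choosing $\tilde p$ versus $\tilde q$ in the four cases; once the Potter bounds are in hand, everything else is routine elementary integration.
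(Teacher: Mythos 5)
Your argument is correct, and it reaches the same estimates by a more explicit route than the paper. The paper's own proof is essentially a one\-/line citation: after checking that $t\mapsto w(1/t)^{\beta}$ is an O-RV function at infinity whose indices are $-\beta p_{1}$ and $-\beta q_{1}$, it quotes Theorems 3 and 4 of \cite{aa}, which are precisely the Karamata-type integral estimate and the limit statement for O-RV functions. You instead take as your only external input the uniform Potter envelope for $w$ and carry out the substitution $t=sx$ and the resulting elementary $s$-integrals yourself; the sign bookkeeping in all four cases is right, and the reduction of the convergence condition to the stated hypothesis on $\tau$ is exactly as it should be. What your version buys is transparency --- one sees precisely where $\tau>-\beta p_{1}$ versus $\tau<-\beta q_{1}$ and the sign of $\beta$ enter, which the paper's citation hides; what it costs is that the Potter envelope, uniform over $x,sx\in(0,x_{0}]$, is not literally ``immediate'' from the existence of the limits in (\ref{af1}): that uniformity is the content of the Potter/uniform-boundedness theorem for O-RV functions (for the monotone $w=\delta _{\nu }^{-1}$ at hand it follows from a short iteration of the defining $\limsup$, so your citation of \cite{aa} and \cite{bgt} is legitimate, but you should say that this is what you are using). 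Two further pieces of routine bookkeeping are worth recording: the region $x\in (x_{0},1]$ must be handled by positivity and boundedness of $w$ there, as you note; and in parts b) and d) the leftover constant $\int_{x_{0}}^{1}t^{\tau }w\left( t\right) ^{\beta }\,dt/t$ is dominated by $x^{\tau }w\left( x\right) ^{\beta }$ only because that quantity tends to $\infty $, so in those two cases the limit claim should be established before the integral bound. Neither point is a gap.
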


\begin{proof}
The claims follow easily by Theorems 3, 4 in \cite{aa}. Because of the
similarities, we will prove c) only. Let $\beta <0$ and $\tau >-\beta q_{1}$%
. Then 
\begin{eqnarray*}
\overline{\lim_{t\rightarrow \infty }}\frac{w\left( \frac{1}{\varepsilon t}%
\right) ^{\beta }}{w\left( \frac{1}{t}\right) ^{\beta }} &=&\overline{%
\lim_{x\rightarrow 0}}\frac{w\left( x\right) ^{-\beta }}{w\left( \varepsilon
^{-1}x\right) ^{-\beta }}=\overline{\lim_{x\rightarrow 0}}\frac{w\left(
\varepsilon \varepsilon ^{-1}x\right) ^{-\beta }}{w\left( \varepsilon
^{-1}x\right) ^{-\beta }} \\
&=&\overline{\lim_{x\rightarrow 0}}\frac{w\left( \varepsilon x\right)
^{-\beta }}{w\left( x\right) ^{-\beta }}=r_{1}\left( \varepsilon \right)
^{-\beta }<\infty ,\varepsilon >0.
\end{eqnarray*}%
Hence $w\left( \frac{1}{t}\right) ^{\beta },t\geq 1$, is an O-RV function at
infinity with%
\begin{equation*}
p=\lim_{\varepsilon \rightarrow 0}\frac{\log r_{1}\left( \varepsilon \right)
^{-\beta }}{\log \varepsilon }=-\beta p_{1}\leq -\beta
q_{1}=\lim_{\varepsilon \rightarrow \infty }\frac{\log r_{1}\left(
\varepsilon \right) ^{-\beta }}{\log \varepsilon }=q.
\end{equation*}

Then for $x\in (0,1],$%
\begin{equation*}
\int_{0}^{x}t^{\tau }w\left( t\right) ^{\beta }\frac{dt}{t}%
=\int_{x^{-1}}^{\infty }t^{-\tau }w\left( \frac{1}{t}\right) ^{\beta }\frac{%
dt}{t}\leq Cx^{\tau }w\left( x\right) ^{\beta }
\end{equation*}%
by Theorem 3 in \cite{aa}, and $\lim_{x\rightarrow 0}x^{\tau }w\left(
x\right) ^{\beta }=0$ according to Theorem 4 in \cite{aa}.
\end{proof}

\begin{corollary}
\label{ac1}Assume $w=w_{\nu }$ is an O-RV function at zero and $p_{1}>0$.
Let $N>1$, $\beta >0$. Then%
\begin{equation*}
\sum_{j=0}^{\infty }w\left( N^{-j}\right) ^{\beta }<\infty .
\end{equation*}
\end{corollary}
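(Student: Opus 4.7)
The plan is to compare the series $\sum_{j\geq 0} w(N^{-j})^{\beta}$ with the integral $\int_{0}^{1} w(t)^{\beta}\,\frac{dt}{t}$, which is finite by Lemma~\ref{al1}(a). The first step is to apply that lemma with $\tau=0$ and the given $\beta>0$: since the hypothesis $p_{1}>0$ gives $0>-\beta p_{1}$, the lemma yields
\begin{equation*}
\int_{0}^{1} w(t)^{\beta}\,\frac{dt}{t}\;\leq\; C\, w(1)^{\beta}\;<\;\infty .
\end{equation*}

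The second step is to discretize this integral. Observe that $w(r)=\delta_{\nu}(r)^{-1}$ is nondecreasing in $r$, since $\delta_{\nu}(r)=\nu(\{|y|>r\})$ is nonincreasing. Hence for $t\in[N^{-j-1},N^{-j}]$ we have $w(t)\geq w(N^{-j-1})$. By the definition of $r_{1}$ and its finiteness at $N$ (the O-RV property), there exists $J_{0}\in\mathbf{N}$ such that for all $j\geq J_{0}$,
\begin{equation*}
\frac{w(N^{-j})}{w(N^{-j-1})}\;=\;\frac{w(N\cdot N^{-j-1})}{w(N^{-j-1})}\;\leq\; r_{1}(N)+1,
\end{equation*}
and therefore $w(t)^{\beta}\geq c\, w(N^{-j})^{\beta}$ for $t\in[N^{-j-1},N^{-j}]$, with $c=(r_{1}(N)+1)^{-\beta}>0$ independent of $j\geq J_{0}$.

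The third step is to combine these. Integrating against $dt/t$ on each dyadic block gives
\begin{equation*}
\int_{N^{-j-1}}^{N^{-j}} w(t)^{\beta}\,\frac{dt}{t}\;\geq\; c\,(\log N)\, w(N^{-j})^{\beta},\qquad j\geq J_{0},
\end{equation*}
and summing over $j\geq J_{0}$ bounds the tail of the series by $(c\log N)^{-1}\int_{0}^{N^{-J_{0}}} w(t)^{\beta}\,\frac{dt}{t}<\infty$. The finitely many initial terms $j<J_{0}$ contribute a finite amount, which completes the proof.

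The argument is essentially routine once Lemma~\ref{al1}(a) is applied; the only small subtlety is the uniform-in-$j$ comparison between $w(N^{-j-1})$ and $w(N^{-j})$. This is where the O-RV hypothesis is genuinely used: without the finiteness of $r_{1}(N)$, consecutive values of $w(N^{-j})$ could be arbitrarily far apart and the integral-to-sum comparison would fail.
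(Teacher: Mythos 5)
Your proof is correct and follows essentially the same route as the paper: both reduce the sum to $\int_{0}^{1}w(t)^{\beta}\,\frac{dt}{t}$ via monotonicity of $w$ and a block-by-block (equivalently, sum-versus-integral) comparison, and then invoke Lemma~\ref{al1}(a) with $\tau=0$, which is where $p_{1}>0$ enters. The only difference is cosmetic: since $w$ is nondecreasing, the block $[N^{-j-1},N^{-j}]$ already controls $w(N^{-j-1})^{\beta}$ and an index shift finishes the argument, so your extra O-RV comparison of consecutive terms is harmless but not needed --- the O-RV hypothesis is really only used to make the integral finite.
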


\begin{proof}
Indeed,%
\begin{equation*}
\sum_{j=0}^{\infty }w\left( N^{-j}\right) ^{\beta }\leq \int_{0}^{\infty
}w\left( N^{-x}\right) ^{\beta }dx\leq C\int_{0}^{1}w\left( t\right) ^{\beta
}\frac{dt}{t}<\infty ,
\end{equation*}%
because, by Lemma \ref{al1}a)$,$%
\begin{equation*}
\int_{0}^{x}w\left( t\right) ^{\beta }\frac{dt}{t}\leq Cw\left( x\right)
^{\beta },x\in \left[ 0,1\right] .
\end{equation*}
\end{proof}

We will need some L\'{e}vy measure moment estimates.

\begin{lemma}
\label{al2}Let $\nu \in \mathfrak{A}^{\alpha },$ and \thinspace $w=w_{\nu }$
be an O-RV function at zero with $p_{1},q_{1}$ defined in (\ref{af1}). Assume 
\begin{eqnarray*}
0 &<&p_{1}\leq q_{1}<1\text{ if }\alpha \in \left( 0,1\right) , \\
1 &\leq &p_{1}\leq q_{1}<2\text{ if }\alpha =1, \\
1 &<&p_{1}\leq q_{1}<2\text{ if }\alpha \in \left( 1,2\right) .
\end{eqnarray*}

Then

(i) $\ $%
\begin{eqnarray*}
\sup_{R\in (0,1]}\int \left( \left\vert y\right\vert \wedge 1\right) \tilde{%
\nu}_{R}\left( dy\right)  &<&\infty \text{ if }\alpha \in \left( 0,1\right) ,
\\
\sup_{R\in (0,1]}\int \left( \left\vert y\right\vert ^{2}\wedge 1\right) 
\tilde{\nu}_{R}\left( dy\right)  &<&\infty \text{ if }\alpha =1, \\
\sup_{R\in (0,1]}\int \left( \left\vert y\right\vert ^{2}\wedge \left\vert
y\right\vert \right) \tilde{\nu}_{R}\left( dy\right)  &<&\infty \text{ if }%
\alpha \in (1,2).
\end{eqnarray*}

(ii)%
\begin{equation*}
\inf_{R\in (0,1]}\int_{\left\vert y\right\vert \leq 1}\left\vert
y\right\vert ^{2}\tilde{\nu}_{R}\left( dy\right) \geq c_{1},
\end{equation*}%
for some $c_{1}>0.$
\end{lemma}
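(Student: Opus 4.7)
My plan is to express every moment of $\tilde\nu_R$ as an integral of $\delta$ against a power, and then invoke Lemma \ref{al1}. The key identities are the tail formula $\tilde\nu_R(\{|y|>r\}) = w(R)/w(Rr)$ and, for $k>0$, the layer-cake consequences
\[
\int_{|y|\leq 1}|y|^{k}\,\tilde\nu_R(dy) = \frac{k\,w(R)}{R^{k}}\int_0^{R} u^{k-1}\delta(u)\,du - 1, \qquad \int_{|y|>1}|y|\,\tilde\nu_R(dy) = 1 + \frac{w(R)}{R}\int_R^{\infty}\delta(u)\,du.
\]

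For part (i), I would treat the three regimes in parallel. In each, the contribution from $\{|y|\leq 1\}$ reduces to estimating $\int_0^R u^{k-1}\delta(u)\,du$ with $k=1$ (when $\alpha\in(0,1)$) or $k=2$ (when $\alpha\geq 1$); in both cases the bound is supplied by Lemma \ref{al1}(c) with $\beta=-1$ and $\tau=k$, the hypothesis $\tau>q_1$ being precisely $q_1<k$ as contained in assumption \textbf{A}. For $\alpha\in(1,2)$ one must additionally handle the tail $\int_R^{\infty}\delta(u)\,du$: splitting it as $\int_R^{1}+\int_1^{\infty}$, the inner integral is bounded by Lemma \ref{al1}(d) with $\beta=-1,\tau=1$ (admissible since $p_1>1$); the outer integral is finite by the moment hypothesis built into $\mathfrak{A}^{\alpha}$ for $\alpha\in(1,2)$; and $\sup_{R\in(0,1]} w(R)/R<\infty$ follows from Lemma \ref{al1}(a) with $\beta=1,\tau=-1$ (admissible since $-1>-p_1$).

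For part (ii) I would discard mass and use the elementary bound
\[
\int_{|y|\leq 1}|y|^{2}\,\tilde\nu_R(dy) \;\geq\; \varepsilon_{0}^{2}\,\tilde\nu_R\bigl(\{\varepsilon_{0}<|y|\leq 1\}\bigr) \;=\; \varepsilon_{0}^{2}\!\left[\frac{w(R)}{w(R\varepsilon_{0})} - 1\right],
\]
valid for any $\varepsilon_{0}\in(0,1)$, which reduces the task to choosing $\varepsilon_{0}$ and $\rho<1$ with $\sup_{R\in(0,1]} w(R\varepsilon_{0})/w(R) \leq \rho$. Since $p_1>0$, Potter-type bounds for O-RV functions (a standard consequence of Theorem~2 in \cite{aa} and the sub-multiplicativity of $\varepsilon\mapsto\log r_{1}(\varepsilon)$) yield, for any fixed $\tau\in(0,p_1)$, an inequality $w(R\varepsilon_{0})/w(R) \leq C\varepsilon_{0}^{\tau}$ valid for $R$ in some $(0,R_{0}]$; picking $\varepsilon_{0}$ sufficiently small forces this ratio below, say, $1/2$ on $(0,R_{0}]$, and continuity of $w$ together with the exclusion of ``flat'' intervals of arbitrarily large ratio (itself a consequence of $p_1>0$) extends the uniform bound to the compact range $[R_{0},1]$.

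The computations in (i) are essentially mechanical once the layer-cake identities are in place. The main obstacle is the uniform strict inequality in (ii): the Potter estimate controls only $R$ near zero, so obtaining a single $\varepsilon_{0}\in(0,1)$ that makes $w(R\varepsilon_{0})/w(R)$ bounded away from $1$ on \emph{all} of $(0,1]$ requires patching the Potter bound with a compactness argument on $[R_{0},1]$, using the O-RV structure to preclude long intervals of constancy of $w$.
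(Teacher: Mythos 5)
Your part (i) is correct and is essentially the paper's own argument: both write the truncated moments via the layer-cake formula in terms of $\delta=w^{-1}$ and then apply Lemma \ref{al1} with exactly the exponent bookkeeping you describe ($\tau=k>q_{1}$ for the contribution of $\{|y|\le R\}$, $\tau=1<p_{1}$ for the inner tail when $\alpha\in(1,2)$, the moment hypothesis on $\mathfrak{A}^{\alpha}$ for $\int_{1}^{\infty}\delta$, and $\lim_{R\to0}R^{-1}w(R)=0$ for the factor $w(R)/R$). Nothing to add there.

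Part (ii) is where you diverge from the paper and where there is a genuine gap. The paper keeps the exact identity $\int_{|y|\le 1}|y|^{2}\tilde{\nu}_{R}(dy)=2\int_{0}^{1}s^{2}\bigl[\tfrac{w(R)}{w(Rs)}-1\bigr]\tfrac{ds}{s}$ and applies Fatou's lemma as $R\to0$, getting $\liminf_{R\to0}\ge 2\int_{0}^{1}s^{2}[r_{1}(s)^{-1}-1]\tfrac{ds}{s}>0$ (positive because $p_{1}>0$ forces $r_{1}(s)<1$ for small $s$); you instead discard all but one annulus and try to bound $w(R\varepsilon_{0})/w(R)$ away from $1$ uniformly on all of $(0,1]$. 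Your treatment of $R$ near $0$ is fine, but your mechanism for the compact range $[R_{0},1]$ fails as stated: $w=\delta^{-1}$ need not be continuous (a point mass of $\nu$ on a sphere makes $\delta$ jump), and $p_{1}>0$ is an asymptotic condition at the origin which does \emph{not} preclude $\delta$ from being constant on a subinterval of $[R_{0}\varepsilon_{0},1]$ with arbitrarily large endpoint ratio --- take any admissible $\nu$ and delete its mass from $\{c<|y|\le1\}$; the indices $p_{1},q_{1}$ are unchanged, yet $w(R\varepsilon_{0})/w(R)=1$ at $R=1$ whenever $\varepsilon_{0}\ge c$. The gap is closable, but not by continuity or a no-flat-intervals principle: since $w$ is nondecreasing, shrinking $\varepsilon_{0}$ only improves your bound on $(0,R_{0}]$ with the same $R_{0}$, and since $\delta(r)\uparrow\nu(\mathbf{R}_{0}^{d})=\infty$ as $r\downarrow0$ you may shrink $\varepsilon_{0}$ further until $\delta(\varepsilon_{0})>\delta(R_{0})+1$; then for every $R\in[R_{0},1]$ one has $\tilde{\nu}_{R}(\{\varepsilon_{0}<|y|\le1\})=[\delta(R\varepsilon_{0})-\delta(R)]/\delta(R)\ge[\delta(\varepsilon_{0})-\delta(R_{0})]/\delta(R_{0})>0$. (For what it is worth, the paper's own proof also only addresses the limit $R\to0$ and is silent on $[R_{0},1]$, where the crude bound $\int_{|y|\le1}|y|^{2}\tilde{\nu}_{R}(dy)\ge\delta(R_{0})^{-1}\int_{|y|\le R_{0}}|y|^{2}\nu(dy)>0$ suffices; so you are attempting a step the paper glosses over, just with the wrong tool.)
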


\begin{proof}
(i) Let $\alpha \in \left( 0,1\right) .$ Then by Lemma \ref{al1},%
\begin{eqnarray*}
\int_{\left\vert y\right\vert \leq 1}\left\vert y\right\vert \tilde{\nu}%
_{R}\left( dy\right)  &=&R^{-1}\int_{\left\vert y\right\vert \leq
R}\left\vert y\right\vert \nu \left( dy\right)  \\
&=&R^{-1}\int_{0}^{R}[\delta \left( s\right) -\delta \left( R\right) ]ds,
\end{eqnarray*}%
and 
\begin{equation*}
\int_{\left\vert y\right\vert \leq 1}\left( \left\vert y\right\vert \wedge
1\right) \tilde{\nu}_{R}\left( dy\right) =R^{-1}\int_{0}^{R}w\left( s\right)
^{-1}ds\leq C,~R\in (0,1].
\end{equation*}

Let $\alpha =1.$ Then, using Lemma \ref{al1} we have%
\begin{equation*}
\int_{\left\vert y\right\vert \leq 1}\left( \left\vert y\right\vert
^{2}\wedge 1\right) \tilde{\nu}_{R}\left( dy\right)
=2R^{-2}\int_{0}^{R}s^{2}w\left( s\right) ^{-1}\frac{ds}{s}\leq C,~R\in (0,1].
\end{equation*}%
Let $\alpha \in \left( 1,2\right)$. Then similarly,%
\begin{eqnarray*}
&&R^{-1}\int_{\left\vert y\right\vert >R}\left\vert y\right\vert \nu \left(
dy\right) =R^{-1}\int_{0}^{\infty }\delta \left( s\vee R\right) ds \\
&=&\delta \left( R\right) +\int_{R}^{\infty }\delta \left( s\right)
ds=\delta \left( R\right) +\int_{R}^{\infty }w\left( s\right) ^{-1}ds
\end{eqnarray*}%
and with $R\in (0,1],$%
\begin{eqnarray}
R^{-2}\int_{\left\vert y\right\vert \leq R}\left\vert y\right\vert ^{2}\nu
\left( dy\right)  &=&2R^{-2}\int_{0}^{R}s^{2}[w\left( s\right) ^{-1}-w\left(
R\right) ^{-1}]\frac{ds}{s}  \label{af3} \\
&=&2R^{-2}\int_{0}^{R}s^{2}w\left( s\right) ^{-1}\frac{ds}{s}-w\left(
R\right) ^{-1}.  \notag
\end{eqnarray}%
Hence, by Lemma \ref{al1},%
\begin{eqnarray*}
&&\int \left( \left\vert y\right\vert ^{2}\wedge \left\vert y\right\vert
\right) \tilde{\nu}_{R}\left( dy\right)  \\
&\leq &2R^{-2}\int_{0}^{R}s^{2}w\left( s\right) ^{-1}\frac{ds}{s}%
+\int_{R}^{1}w\left( s\right) ^{-1}ds+\int_{1}^{\infty }w\left( s\right)
^{-1}ds \\
&=&2R^{-2}\int_{0}^{R}s^{2}w\left( s\right) ^{-1}\frac{ds}{s}%
+\int_{R}^{1}w\left( s\right) ^{-1}ds+\int_{\left\vert y\right\vert
>1}\left\vert y\right\vert \nu \left( dy\right)  \\
&\leq &Cw\left( R\right) ^{-1},~R\in (0,1].
\end{eqnarray*}

(ii) By (\ref{af3}), for $R\in (0,1],$ 
\begin{eqnarray*}
\int_{\left\vert y\right\vert \leq 1}\left\vert y\right\vert ^{2}\tilde{\nu}%
_{R}\left( dy\right) &=&w\left( R\right) \int_{\left\vert y\right\vert \leq
1}\left\vert y\right\vert ^{2}\nu _{R}\left( dy\right) \\
&=&2R^{-2}\int_{0}^{R}s^{2}[\frac{w\left( R\right) }{w\left( s\right) }-1]%
\frac{ds}{s}=2\int_{0}^{1}s^{2}[\frac{w\left( R\right) }{w\left( Rs\right) }%
-1]\frac{ds}{s}.
\end{eqnarray*}%
Hence, by Fatou's lemma,%
\begin{equation*}
\underline{\lim }_{R\rightarrow 0}\int_{\left\vert y\right\vert \leq
1}\left\vert y\right\vert ^{2}\tilde{\nu}_{R}\left( dy\right) \geq
2\int_{0}^{1}s^{2}[\frac{1}{r_{1}\left( s\right) }-1]\frac{ds}{s}=c_{1}>0
\end{equation*}%
if $\left\vert \left\{ s\in \left[ 0,1\right] :r_{1}\left( s\right)
<1\right\} \right\vert >0$, and 
\begin{equation*}
\lim \inf_{R\rightarrow 0}\frac{w\left( R\right) }{w\left( Rs\right) }=\frac{%
1}{\lim \sup_{R\rightarrow 0}\frac{w\left( Rs\right) }{w\left( R\right) }}=%
\frac{1}{r_{1}\left( s\right) },~s\in (0,1].
\end{equation*}
\end{proof}

According to \cite{dm}, Chapter 3, 70-74, any L\'{e}vy measure $\nu \in 
\mathfrak{A}^{\alpha }$ can be disintegrated as%
\begin{equation*}
\nu \left( \Gamma \right) =-\int_{0}^{\infty }\int_{S_{d-1}}\chi _{\Gamma
}\left( rw\right) \Pi \left( r,dw\right) d\delta \left( r\right) ,\Gamma \in 
\mathcal{B}\left( \mathbf{R}_{0}^{d}\right) ,
\end{equation*}%
where $\delta =\delta _{\nu }$, and $\Pi \left( r,dw\right) ,r>0,$ is a
measurable family of measures on the unit sphere $S_{d-1}$ with $\Pi \left(
r,S_{d-1}\right) =1,r>0.$ The following is a straightforward consequence of
Lemma \ref{al2}(ii).

\begin{corollary}
\label{ac2}Let $\nu \in \mathfrak{A}^{\alpha },$ 
\begin{equation*}
\nu \left( \Gamma \right) =-\int_{0}^{\infty }\int_{S_{d-1}}\chi _{\Gamma
}\left( rw\right) \Pi \left( r,dw\right) d\delta \left( r\right) ,\Gamma \in 
\mathcal{B}\left( \mathbf{R}_{0}^{d}\right) ,
\end{equation*}%
where $\delta =\delta _{\pi },\Pi \left( r,dw\right),~r>0,$ is a measurable
family of measures on $S_{d-1}$ with $\Pi \left( r,S_{d-1}\right) =1,~r>0.$
Assume $w=w_{\nu }=\delta_{\nu } ^{-1}$ be an O-RV function at zero satisfying assumptions
of Lemma \ref{al2}, and 
\begin{equation}
\inf_{\left\vert \hat{\xi}\right\vert =1}\int_{S_{d-1}}\left\vert \hat{\xi}%
\cdot w\right\vert ^{2}\Pi \left( r,dw\right) \geq c_{0}>0.  \label{af2}
\end{equation}

Then assumption \textbf{B }holds.
\end{corollary}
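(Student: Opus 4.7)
The plan is to reduce assumption \textbf{B} to Lemma \ref{al2}(ii) by using the polar disintegration of $\nu$ and pulling the angular integral out. Fix $R\in(0,1]$ and a unit vector $\hat\xi\in\mathbf{R}^d$. Writing $y=rw$ with $r=|y|$, $w\in S_{d-1}$, and using the disintegration
\[
\nu(\Gamma)=-\int_0^\infty\!\!\int_{S_{d-1}}\chi_\Gamma(rw)\,\Pi(r,dw)\,d\delta(r),
\]
the quantity $|\hat\xi\cdot y|^2$ factors as $r^2|\hat\xi\cdot w|^2$, so that
\[
\int_{|y|\le R}|\hat\xi\cdot y|^2\nu(dy)=-\int_0^R r^2\Bigl(\int_{S_{d-1}}|\hat\xi\cdot w|^2\Pi(r,dw)\Bigr)\,d\delta(r).
\]

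The key observation is that $-d\delta(r)$ is a positive measure on $(0,\infty)$, so the assumed lower bound $\int_{S_{d-1}}|\hat\xi\cdot w|^2\Pi(r,dw)\ge c_0$ can be pulled out of the radial integral against $-d\delta$. This yields
\[
\int_{|y|\le R}|\hat\xi\cdot y|^2\nu(dy)\;\ge\;-c_0\int_0^R r^2\,d\delta(r)\;=\;c_0\int_{|y|\le R}|y|^2\nu(dy),
\]
where the last equality comes from applying the same disintegration to the isotropic integrand $|y|^2$ and using $\Pi(r,S_{d-1})=1$.

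Next, by the definition $\tilde\nu_R(dy)=w(R)\nu(Rdy)$ together with the change of variables $y\mapsto y/R$,
\[
\int_{|y|\le 1}|\hat\xi\cdot y|^2\tilde\nu_R(dy)=\frac{w(R)}{R^2}\int_{|y|\le R}|\hat\xi\cdot y|^2\nu(dy)\;\ge\;c_0\,\frac{w(R)}{R^2}\int_{|y|\le R}|y|^2\nu(dy).
\]
The last expression is precisely $c_0\int_{|y|\le 1}|y|^2\tilde\nu_R(dy)$, which by Lemma \ref{al2}(ii) is bounded below by $c_0c_1>0$ uniformly in $R\in(0,1]$. Taking the infimum over $R\in(0,1]$ and $|\hat\xi|=1$ yields assumption \textbf{B}.

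There is no genuine obstacle here; the only delicate point is bookkeeping the sign of $d\delta$ when pulling the angular bound out of the radial integral, and noting that the hypothesis $|\{s\in[0,1]:r_1(s)<1\}|>0$ required in Lemma \ref{al2}(ii) is inherited from the assumptions of that lemma, which we assume.
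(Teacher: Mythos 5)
Your proposal is correct and follows essentially the same route as the paper's own proof: disintegrate in polar coordinates, pull the angular lower bound $c_{0}$ out against the positive measure $-d\delta$, identify the result with $c_{0}\int_{|y|\le 1}|y|^{2}\tilde{\nu}_{R}(dy)$ after rescaling, and invoke Lemma \ref{al2}(ii). The only difference is cosmetic (you make the sign of $-d\delta$ and the caveat about $|\{s\in[0,1]:r_{1}(s)<1\}|>0$ explicit, which the paper leaves implicit).
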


\begin{proof}
Indeed, for $\left\vert \hat{\xi}\right\vert =1,R\in (0,1],$ with $C>0,$%
\begin{eqnarray*}
&&\int_{\left\vert y\right\vert \leq 1}\left\vert \hat{\xi}\cdot
y\right\vert ^{2}\nu _{R}\left( dy\right) \\
&=&R^{-2}\int_{\left\vert y\right\vert \leq R}\left\vert \hat{\xi}\cdot
y\right\vert ^{2}\nu \left( dy\right)
=-R^{-2}\int_{0}^{R}\int_{S_{d-1}}\left\vert \hat{\xi}\cdot w\right\vert
^{2}\Pi \left( r,dw\right) r^{2}d\delta \left( r\right) \\
&\geq &-c_{0}R^{-2}\int_{0}^{R}r^{2}d\delta \left( r\right)
=c_{0}R^{-2}\int_{\left\vert y\right\vert \leq R}\left\vert y\right\vert
^{2}\nu \left( dy\right) =c_{0}\int_{\left\vert y\right\vert \leq
1}\left\vert y\right\vert ^{2}\nu _{R}\left( dy\right) .
\end{eqnarray*}%
Hence by Lemma \ref{al2}(ii), 
\begin{eqnarray*}
\inf_{R\in (0,1]}\inf_{\left\vert \hat{\xi}\right\vert =1}\int_{\left\vert
y\right\vert \leq 1}\left\vert \hat{\xi}\cdot y\right\vert ^{2}\tilde{\nu}%
_{R}\left( dy\right) &\geq &c_{0}~\inf_{R\in (0,1]}\int_{\left\vert
y\right\vert \leq 1}\left\vert y\right\vert ^{2}\tilde{\nu}_{R}\left(
dy\right) \\
&\geq &c_{0}c_{1}>0.
\end{eqnarray*}
\end{proof}

\begin{remark}
\label{ar1}Let $\alpha \in \left( 0,2\right) ,$ $\nu \in \mathfrak{A}%
^{\alpha }$, and $w_{\nu }$ be an O-RV function at zero, $p_{1}>0.$ By Theorems 3 and
4 in \cite{aa}, for any $\sigma \in (0,p_{1}),$%
\begin{eqnarray*}
\int_{r<\left\vert y\right\vert \leq 1}\left\vert y\right\vert ^{\sigma }\nu
\left( dy\right)  &=&\sigma \int_{r}^{1}t^{\sigma }w\left( t\right) ^{-1}%
\frac{dt}{t}-\delta \left( 1\right)  \\
&\geq &cr^{\sigma }w\left( r\right) ^{-1}-\delta \left( 1\right) \rightarrow
\infty 
\end{eqnarray*}%
as $r\rightarrow 0$. Hence $p_{1}\leq \alpha $. On the other hand for any $%
\sigma >q_{1}$, by Lemma \ref{al2}, 
\begin{equation*}
\int_{0<\left\vert y\right\vert \leq 1}\left\vert y\right\vert ^{\sigma }\nu
\left( dy\right) \leq \sigma \int_{0}^{1}t^{\sigma }w\left( t\right) ^{-1}%
\frac{dt}{t}<\infty ,
\end{equation*}%
and $\alpha \leq q_{1}.$
\end{remark}

\end{document}